\theoremstyle{plain}
\newtheorem{theorem}{Theorem}[section]
\newtheorem{proposition}[theorem]{Proposition}
\newtheorem{corollary}[theorem]{Corollary}
\newtheorem{lemma}[theorem]{Lemma}
\theoremstyle{definition}
\theoremstyle{remark}
\newtheorem{remark}[theorem]{Remark}
\numberwithin{equation}{section}
\def\Ch{\text{Ch}}
\def\Im{\text{Im}}
\def\Re{\text{Re}}
\renewcommand{\and}{\text{and}}
\begin{document}

\title{Various $3 \times 3$ Nonnegative Matrices
        with \\  Prescribed Eigenvalues and Diagonal Entries}

\author{Jin Ok Hwang\footnote{aldif89@korea.ac.kr} and Donggyun Kim\footnote{kim.donggyun@gmail.com} \\
{\small Department of Mathematics, Korea University}}

\date{} 

\maketitle

\begin{abstract}
 In this paper, we answer the various forms of nonnegative inverse eigenvalue problems with prescribed diagonal entries for order three: real or complex general matrices, symmetric stochastic matrices, and real or complex doubly stochastic matrices. We include the known cases, the symmetric matrices and real or complex stochastic matrices, to compare the other results and for completeness. In addition, for a given list of eigenvalues, we compute the exact range for the largest value of the diagonal entries of the various nonnegative matrices.
\end{abstract}

{\flushleft \emph{Keywords:}
nonnegative inverse eigenvalue problems, nonnegative matrices, symmetric matrices, stochastic matrices, symmetric stochastic matrices, doubly stochastic matrices} \\
2010 MSC: 15A18,\ 15A51

\tableofcontents

%

\section{Introduction}

The \emph{nonnegative inverse eigenvalue problem} (NIEP) is to determine
necessary and sufficient conditions for a list of real or complex numbers
to be the eigenvalues of some nonnegative matrix of real numbers.
For a list $\Lambda$ of numbers, if there exists a nonnegative matrix $A$ with eigenvalues $\Lambda$, we say that $\Lambda$ is \emph{realizable} and that $A$ \emph{realizes} $\Lambda$.
The nonnegative matrix could be of various forms: general, symmetric, stochastic, symmetric stochastic, and doubly stochastic nonnegative matrices. By a complex stochastic NIEP, we mean the NIEP of finding a stochastic nonnegative matrix over real numbers with complex eigenvalues.

The origin of the NIEP goes back to Kolmogorov \cite{Kolmogoroff1937} in 1937, who asked when a given complex number is an eigenvalue of some nonnegative matrix. Suleimanova \cite{Suleimanova1945} extended Kolmogorov's question to the NIEP form in 1949. Since then, vast literature has been devoted to its study, but we are far from a satisfactory solution.

Let $n$ be the cardinality of list $\Lambda$. For the case $n=3$, the general and symmetric NIEPs for a list of real numbers are straightforward, and the general NIEP for a list of complex numbers was proved by Loewy and London \cite{LoewyLondon1978} in 1978. Perfect \cite{Perfect55} in 1955 solved the stochastic NIEP for a list of real numbers and by a slight extension, Soto, Salas, and Manzaneda \cite{Soto2010} in 2010 solved this problem for a complex list. In 1965, Perfect and Minsky \cite{PerfectMirsky65} solved the symmetric stochastic and doubly stochastic NIEPs for lists of real and complex numbers.

For the case $n=4$, in \cite{LoewyLondon1978} in 1978, Loewy and London also gave a solution for a general NIEP for a list of real numbers. For a general NIEP for a list of complex numbers, a solution in terms of the power sums of the elements of $\Lambda$ appeared in the Ph.D. thesis of Meehan \cite{Meehan1998} in 1998 and a solution in terms of the coefficients of the characteristic polynomial of a nonnegative matrix was published by Torre-Mayo, Abril-Raymundo, Alarcia-Est\'evez, Mariju\'an, and Pisonero \cite{Torre-Mayo-ed2007} in 2007. Wuwen \cite{Wuwen1996} presented a solution for the symmetric NIEP in 1996. For other NIEPs, only partial answers are known, see for example \cite{Mourad2006} and \cite{Mourad2009}. For the case $n \ge 5$,  the NIEP is unsolved in the general case and we refer to \cite{Soto2013}.

We may ask further that for two given numerical lists $\Lambda$ and $\Omega$, what the necessary and sufficient conditions  are for $\Lambda$ to be the eigenvalues and $\Omega$ the diagonal entries of some nonnegative matrix  \cite{LaffeySmigoc2007}. We call this kind of question a NIEP with prescribed diagonal entries.

For the case $n=3$, the symmetric NIEP with prescribed diagonal entries was proved by Fieldler \cite{Fiedler1974} in 1974. The real stochastic NIEP with prescribed diagonal entries was proved by Perfect \cite{Perfect55} in 1955 and by a slight extension, the complex stochastic NIEP with prescribed diagonal entries was proved by Soto, Salas, and Manzaneda \cite{Soto2010} in 2010.

In this paper, we answer the various forms of nonnegative inverse eigenvalue problems with prescribed diagonal entries for $n=3$: real or complex general matrices (section \ref{sec:General}), symmetric stochastic matrices (section \ref{sec:Sym-stocha}), and real or complex doubly stochastic matrices (section \ref{sec:Doubly-stochas}). We include the known cases, the symmetric matrices (section \ref{sec:Symmetric}) and real or complex stochastic matrices (section \ref{sec:Stochastic}), to compare the other results and for completeness. In addition, for a given list of eigenvalues, we compute the exact range for the largest value of the
diagonal entries of the various nonnegative matrices.

%

\section{Preliminary}

We say that a matrix is \emph{nonnegative} if all its entries are nonnegative real numbers.
A basic theorem in the theory of nonnegative matrices, due to Perron and Frobenius, see for example \cite{Minc1988}, states that the eigenvalues of a nonnegative matrix $A$ have to contain a nonnegative number that is greater than or equal to the absolute value of any member of the eigenvalues of $A$. Since for any nonnegative matrix $A$ with eigenvalues $\Lambda=\{\lambda_1, \lambda_2, \dots, \lambda_n \}$ the trace of $A^k,\ k=1,2,\dots $ is nonnegative, the following condition must hold.
\[  s_k(\Lambda) = \lambda_1^k + \lambda_2^k+ \cdots+ \lambda_n^k \ge 0.\]
Loewy and London \cite{LoewyLondon1978} and Johnson \cite{Johnson1981} independently proved the following result, known as the JLL inequalities:
\[ n^{k-1} s_{km}(\Lambda) \ge \ s_m^k(\Lambda)\]
for all positive integers $k$ and $m$. Since the characteristic polynomial of a nonnegative matrix is a polynomial over real numbers, the eigenvalues are closed under complex conjugation. These four statements are necessary conditions for the list $\Lambda$ of complex numbers to be the eigenvalues of a nonnegative matrix. The necessary conditions are sufficient only when $\Lambda$ has at most three elements \cite{LoewyLondon1978}.

Before we answer the various forms of NIEP with prescribed diagonal entries for $n=3$, we present the cases for $n=2$ which are trivial, but give a hint for the case $n=3$.

Let $A $ be a $2 \times 2$ nonnegative matrix over the field of real numbers.
By the Perron-Frobenius theorem, $A$ has a nonnegative eigenvalue, and this implies that the other eigenvalue of $A$ is also real.

\begin{proposition}
  Let $\Lambda=\{\lambda_1, \lambda_2\}$ and  $\Omega=\{\omega_1, \omega_2\}$ be lists of real numbers
with $\lambda_1 \ge \lambda_2$ and $\omega_1 \ge \omega_2$.
\begin{enumerate}
\item There is a general, symmetric, or stochastic nonnegative matrix with eigenvalues $\Lambda$ and
diagonal entries $\Omega$ if and only if the lists $\Lambda$ and $\Omega$ satisfy
    \begin{description}
        \item[](i) $\omega_i \ge 0$ for $i= 1 \hbox{\ and\ } 2$,
        \item[](ii) $\omega_1+ \omega_2 = \lambda_1 + \lambda_2$,
        \item[](iii) $\omega_1 \omega_2 \ge \lambda_1 \lambda_2$.
    \end{description}
Moreover, in this case, the exact range for $\omega_1$ is
        \[
        \frac{1}{2}(\lambda_1 +\lambda_2) \le \omega_1 \le \min\{\lambda_1 +\lambda_2, \lambda_1\}
        \]
        and we have
        \[
        \omega_2=\lambda_1 +\lambda_2 -\omega_1.
        \]

\item There is a symmetric stochastic nonnegative matrix with eigenvalues $\Lambda$ and
diagonal entries $\Omega$ if and only if the lists $\Lambda$ and $\Omega$ satisfy
    \begin{description}
        \item[](i) $\omega_i \ge 0$ for $i= 1 \hbox{\ and\ } 2$,
        \item[](ii) $\lambda_1 + \lambda_2 = \omega_1+ \omega_2$,
        \item[](iii) $\omega_1 =  \omega_2$.
    \end{description}
Moreover, in this case, we have
        \[
        \omega_1=\omega_2=\frac{1}{2}(\lambda_1 +\lambda_2).
        \]

\item The list $\Lambda$ consists of the eigenvalues of a general, symmetric, stochastic or symmetric stochastic nonnegative matrix if and only
if the following holds:
    \[ \lambda_1 + \lambda_2\ge 0.
    \]
\end{enumerate}
\end{proposition}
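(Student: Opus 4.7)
The approach is to parameterize a $2\times 2$ nonnegative matrix with diagonal $(\omega_1,\omega_2)$ as
$$A = \begin{pmatrix} \omega_1 & a \\ b & \omega_2 \end{pmatrix}, \quad a,b \ge 0,$$
and read off the trace and determinant to pin down both the necessary conditions and explicit realizers in each form.

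For part 1, necessity is immediate: (i) holds because the diagonal of a nonnegative matrix is nonnegative, (ii) is the trace identity, and (iii) follows from $\omega_1\omega_2 - ab = \lambda_1\lambda_2$ combined with $ab \ge 0$. For sufficiency I would exhibit a realizer in each form. Setting $a = b = \sqrt{\omega_1\omega_2 - \lambda_1\lambda_2}$, which is real and nonnegative by (iii), yields both a general and a symmetric realization. For the stochastic case I would take $a = \lambda_1 - \omega_1$ and $b = \lambda_1 - \omega_2$, so that both row sums equal the Perron root $\lambda_1$; a short algebraic check then confirms $\omega_1\omega_2 - ab = \lambda_1\lambda_2$ automatically.

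The main technical step is the range of $\omega_1$. The bound $\omega_1 \ge (\lambda_1+\lambda_2)/2$ is immediate from $\omega_1 \ge \omega_2$ and (ii), and $\omega_1 \le \lambda_1+\lambda_2$ follows from $\omega_2 \ge 0$. The remaining bound $\omega_1 \le \lambda_1$ requires a small computation: writing $\omega_1 = \lambda_1 + t$ and $\omega_2 = \lambda_2 - t$, one finds
$$\omega_1\omega_2 - \lambda_1\lambda_2 = -t(\lambda_1-\lambda_2+t),$$
which is negative when $t > 0$ by the ordering of the $\lambda_i$, so (iii) forces $t \le 0$. This bound also guarantees $a,b \ge 0$ in the stochastic construction, since $\omega_2 \le \omega_1 \le \lambda_1$. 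Finally, part 2 follows at once: symmetry plus constant row sum forces $a = b$ and $\omega_1 = \omega_2$, so the eigenvalues are $\omega_1 \pm a$, giving $\omega_1 = \omega_2 = (\lambda_1+\lambda_2)/2$ and $a = (\lambda_1-\lambda_2)/2 \ge 0$; part 3 is then a direct corollary, since $\lambda_1 + \lambda_2 \ge 0$ is necessary by the trace and the symmetric stochastic realizer of part 2 is nonnegative precisely when this holds.
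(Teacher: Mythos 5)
Your proof is correct and follows essentially the same route the paper takes: compare trace and determinant of the parametrized matrix to get necessity, then exhibit explicit realizers in each form. The only cosmetic differences are that the paper's general realizer uses $a_{12}=\omega_1\omega_2-\lambda_1\lambda_2$, $a_{21}=1$ rather than reusing the symmetric matrix, and that you phrase the bound $\omega_1\le\lambda_1$ via the substitution $\omega_1=\lambda_1+t$ instead of noting directly that $\omega_1\omega_2-\lambda_1\lambda_2=(\lambda_1-\omega_1)(\omega_1-\lambda_2)$, but the content is identical.
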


For $\Lambda$ and $\Omega$ satisfying the conditions in the first statement, we find
the following general, symmetric and stochastic nonnegative matrices with eigenvalues $\Lambda$ and diagonal entries $\Omega$
\begin{gather*}
       \left(\begin{array}{cccccccc}
        \omega_1 & \omega_1 \omega_2-\lambda_1 \lambda_2  \\ 1 & \omega_2
        \end{array}\right),\quad
        \left(\begin{array}{cccccccc}
        \omega_1 & \sqrt{\omega_1 \omega_2-\lambda_1 \lambda_2} \\
        \sqrt{\omega_1 \omega_2-\lambda_1 \lambda_2}  & \omega_2
        \end{array}\right), \\
      \hbox{and\ \ } \left(\begin{array}{cccccccc}
        \omega_1 & \lambda_1-\omega_1 \\
        \lambda_1 - \omega_2 & \omega_2
        \end{array}\right),
\end{gather*}
for each condition, respectively.

For $\Lambda$ and $\Omega$ satisfying the conditions in the second statement,
we find the following symmetric stochastic nonnegative matrix
\begin{equation*}
        \left(\begin{array}{cccccccc}
        \frac{1}{2}(\lambda_1 + \lambda_2) & \frac{1}{2}(\lambda_1 -\lambda_2) \\
        \frac{1}{2}(\lambda_1 -\lambda_2)  & \frac{1}{2}(\lambda_1 + \lambda_2)
        \end{array}\right),
\end{equation*}
with eigenvalues $\Lambda$ and diagonal entries $\Omega$. Note that this matrix
is symmetric doubly stochastic.

The proof is straightforward and one may refer to the proofs of the $3 \times 3$ matrix cases in the following sections.

%
%

\section{General $3 \times 3$ nonnegative matrices} \label{sec:General}

We discuss a general $3 \times 3$ nonnegative matrix case with prescribed real or complex eigenvalues and diagonal entries. Consider first the case of real eigenvalues.

Statement \ref{enum:rg4} in the following Theorem \ref{the:rg} was proved by Loewy and London \cite{LoewyLondon1978}, but here we prove statement \ref{enum:rg4} as a corollary of statements \ref{enum:rg1} and \ref{enum:rg3} in  Theorem \ref{the:rg}. This kind of pattern will appear repeatedly.

\begin{theorem}[real, general case]\label{the:rg}
  Let $\Lambda=\{\lambda_1, \lambda_2, \lambda_3\}$ and $\Omega=\{\omega_1, \omega_2, \omega_3\}$
be lists of real numbers with $\lambda_1 \ge \lambda_2 \ge\lambda_3$ and $\omega_1 \ge\omega_2\ge \omega_3$.

\begin{enumerate}
\item\label{enum:rg1} There is a nonnegative matrix with eigenvalues $\Lambda$ and diagonal entries $\Omega$ if and only if the lists
$\Lambda$ and $\Omega$ satisfy
 \begin{description}
    \item[] (i) $\omega_i \ge 0$ for $i= 1, 2 \hbox{\ and\ } 3$,
    \item[] (ii) $\lambda_1 \geq \omega_1 \ge \lambda_2$,
    \item[] (iii) $\omega_1+~\omega_2 +\omega_3 = \lambda_1 + \lambda_2+ \lambda_3$,
    \item[] (iv) $\omega_1 \omega_2+\omega_1 \omega_3+\omega_2 \omega_3 \ge \lambda_1 \lambda_2+\lambda_1 \lambda_3 +\lambda_2 \lambda_3$.
 \end{description}

\item\label{enum:rg3} If $\Lambda$ and $\Omega$ satisfy the conditions of statement \ref{enum:rg1},
then the exact range for $\omega_1$ is
        \begin{equation}\label{eq:rgw1}
        \max\{\frac{1}{3}(\lambda_1 +\lambda_2+\lambda_3), \lambda_2\} \le \omega_1 \le \min\{\lambda_1 +\lambda_2+\lambda_3, \lambda_1\}.
        \end{equation}
 For the values of $\omega_2$ and $\omega_3$, we may take
  \begin{equation}\label{eq:rgw2}
        \omega_2=\omega_3=\frac{1}{2}(\lambda_1 +\lambda_2+\lambda_3 -\omega_1).
        \end{equation}

\item\label{enum:rg4}\cite{LoewyLondon1978} The list $\Lambda$ is realizable as the eigenvalues of a nonnegative matrix if and only if the following hold:
    \begin{description}
        \item[] (i) $\lambda_1 +\lambda_3 \ge 0$,
        \item[] (ii) $\lambda_1 + \lambda_2 +\lambda_3 \ge 0$.
    \end{description}

\end{enumerate}
\end{theorem}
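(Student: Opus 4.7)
The plan is to deduce statement \ref{enum:rg4} as a corollary of statements \ref{enum:rg1} and \ref{enum:rg3}. For necessity of the two conditions, assume $A$ is a nonnegative $3 \times 3$ matrix whose eigenvalues, listed in decreasing order, are $\lambda_1 \ge \lambda_2 \ge \lambda_3$. The Perron--Frobenius theorem gives that the spectral radius $\rho(A)$ is an eigenvalue and dominates the others in absolute value, so the ordering forces $\rho(A) = \lambda_1 \ge |\lambda_3|$, hence $\lambda_1 + \lambda_3 \ge 0$. The second inequality is simply $\operatorname{tr}(A) = \lambda_1+\lambda_2+\lambda_3 \ge 0$, which holds because $A$ has nonnegative diagonal.

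For sufficiency, assume (i) and (ii). The idea is to exhibit a list $\Omega = \{\omega_1,\omega_2,\omega_3\}$ meeting the hypotheses of statement \ref{enum:rg1}, and then apply that statement to produce the realizing matrix. Guided by statement \ref{enum:rg3}, I would pick any $\omega_1$ in the interval
\[
\bigl[\max\{\tfrac{1}{3}(\lambda_1+\lambda_2+\lambda_3),\lambda_2\},\ \min\{\lambda_1+\lambda_2+\lambda_3,\lambda_1\}\bigr]
\]
and set $\omega_2 = \omega_3 = \tfrac{1}{2}(\lambda_1+\lambda_2+\lambda_3-\omega_1)$. The crucial observation is that this interval is non-empty precisely under hypotheses (i) and (ii): of the four pairwise inequalities between the two candidates for the lower endpoint and the two candidates for the upper endpoint, two are automatic from the ordering $\lambda_1 \ge \lambda_2 \ge \lambda_3$, while the remaining two are equivalent respectively to $\lambda_1+\lambda_2+\lambda_3 \ge 0$ and $\lambda_1+\lambda_3 \ge 0$.

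The main technical point I would have to settle is that this concrete $\Omega$ actually meets all four conditions of statement \ref{enum:rg1}. Nonnegativity of the $\omega_i$, the sum condition, and the constraint $\lambda_1 \ge \omega_1 \ge \lambda_2$ are all immediate from the interval definition and the choice of $\omega_2, \omega_3$. The only non-trivial check is condition (iv), the product-sum inequality; setting $s = \lambda_1+\lambda_2+\lambda_3$ and $p = \lambda_1\lambda_2+\lambda_1\lambda_3+\lambda_2\lambda_3$, it reduces to the quadratic inequality $3\omega_1^2 - 2s\omega_1 + 4p - s^2 \le 0$ in $\omega_1$, whose roots are $(s \pm 2\sqrt{s^2-3p})/3$. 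I expect this to be the main obstacle, but it should follow from an elementary estimate, using the identity $s^2-3p = \tfrac{1}{2}\sum_{i<j}(\lambda_i-\lambda_j)^2$ to show that the upper endpoint $\min\{s,\lambda_1\}$ of our interval does not exceed the larger root $(s+2\sqrt{s^2-3p})/3$. With (iv) secured, statement \ref{enum:rg1} delivers the nonnegative matrix realizing $\Lambda$ and the proof is complete.
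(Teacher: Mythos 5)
Your proposal addresses only statement~\ref{enum:rg4} of the three-part theorem; it does not touch statements~\ref{enum:rg1} and~\ref{enum:rg3}, so as a proof of the theorem as a whole it is incomplete. For statement~\ref{enum:rg4} itself, however, the argument is correct and the necessity direction takes a genuinely different route from the paper's. The paper is committed to presenting~\ref{enum:rg4} as a corollary of~\ref{enum:rg1}: it obtains $\lambda_1+\lambda_2+\lambda_3\ge 0$ from conditions $(i)$ and $(iii)$ of statement~\ref{enum:rg1}, and obtains $\lambda_1+\lambda_3\ge 0$ via the algebraic rewrite $\lambda_1+\lambda_3=(\omega_1-\lambda_2)+\omega_2+\omega_3$ combined with $(i)$ and $(ii)$. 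You instead bypass statement~\ref{enum:rg1} entirely and argue directly from the matrix: Perron--Frobenius gives $\lambda_1=\rho(A)\ge|\lambda_3|$, hence $\lambda_1+\lambda_3\ge 0$, and nonnegativity of the trace gives $\lambda_1+\lambda_2+\lambda_3\ge 0$. This is shorter and more self-contained, though it slightly undercuts your stated plan of deducing~\ref{enum:rg4} \emph{as a corollary} of~\ref{enum:rg1} and~\ref{enum:rg3}. On sufficiency your plan matches the paper's (pick $\omega_1$ in the interval~(\ref{eq:rgw1}) with $\omega_2=\omega_3$ as in~(\ref{eq:rgw2})), and your observation that nonemptiness of the interval is equivalent precisely to the two hypotheses is a clean packaging; but your proposed re-derivation of condition $(iv)$ via the quadratic $3\omega_1^2-2s\omega_1+4p-s^2\le 0$ is redundant, since statement~\ref{enum:rg3} already guarantees exactly this, and appealing to it (as the paper does) would be the cleaner conclusion given your own stated plan.
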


\label{enum:rg2} For $\Lambda$ and $\Omega$ satisfying the conditions of statement \ref{enum:rg1}, we find the following nonnegative matrix
  \[  \left(\begin{array}{cccccccc}
    \omega_1 & 0 & (\lambda_1-\omega_1)(\lambda_2-\omega_1)
         (\lambda_3-\omega_1)\\
     1 & \omega_2 & \omega_1 \omega_2+\omega_1 \omega_3+\omega_2 \omega_3 - \lambda_1 \lambda_2-\lambda_1 \lambda_3 -\lambda_2 \lambda_3   \\
     0 & 1 & \omega_3
            \end{array}\right)
   \]
 with eigenvalues $\Lambda$ and diagonal entries $\Omega$. In this  and hereafter nonnegative matrices, we choose that the diagonal entries are in descending order. We may also choose the diagonal entries in ascending order by applying a suitable permutation similarity.

\begin{proof}
\begin{enumerate}
\item
 Suppose that $\Lambda$ is realizable as the eigenvalues of a nonnegative matrix $A$
with diagonal entries $\Omega$. Then, the elements $\omega_1$, $\omega_2$, and $\omega_3$
 are nonnegative, and therefore, we have condition $(i)$.
 The characteristic polynomial $\Ch_1(\lambda)$ of $A$, using the eigenvalues $\Lambda$, is
 \[ \Ch_1(\lambda) = \lambda_1 \lambda_2 \lambda_3 -(\lambda_1 \lambda_2+\lambda_1 \lambda_3
+\lambda_2 \lambda_3)\lambda+ (\lambda_1 + \lambda_2 + \lambda_3)\lambda^2-\lambda^3. \]

  We may write the nonnegative matrix $A$ as
  \[ A=\left(\begin{array}{cccccccc}
    \omega_1 & a_{12} & a_{13} \\
    a_{21}  & \omega_2 & a_{23} \\
    a_{31} & a_{32} & \omega_3
             \end{array}\right),
  \]
 where all elements $a_{ij}$ are nonnegative.
 Hence, we have another form of the characteristic polynomial $\Ch_2(\lambda)$ of $A$,
  \begin{multline*}
   \Ch_2(\lambda) = a_{12} a_{23} a_{31} + a_{13} a_{21} a_{32} - a_{23} a_{32} \omega_1 - a_{13} a_{31} \omega_2 - a_{12} a_{21} \omega_3 + \omega_1 \omega_2 \omega_3 \\
   - (\omega_1 \omega_2+\omega_1 \omega_3+\omega_2 \omega_3 - a_{12} a_{21} - a_{23} a_{32}- a_{13} a_{31}) \lambda \\
   + (\omega_1 + \omega_2  + \omega_3 )\lambda^2 - \lambda^3.
  \end{multline*}
Then, the two characteristic polynomials $\Ch_1(\lambda)$ and $\Ch_2(\lambda)$ must be identical. Comparing the coefficients of $\lambda^2$, we have
\[ \omega_1+ \omega_2+ \omega_3 = \lambda_1 + \lambda_2 +\lambda_3,\]
which is condition $(iii).$
Comparing the coefficients of $\lambda$, we have
\[ \omega_1 \omega_2+\omega_1 \omega_3+\omega_2 \omega_3 - a_{12} a_{21} - a_{23} a_{32}- a_{13} a_{31} = \lambda_1 \lambda_2+\lambda_1 \lambda_3+\lambda_2 \lambda_3. \]
Since all elements $a_{ij}$ are nonnegative, we have
\[ \omega_1 \omega_2+\omega_1 \omega_3+\omega_2 \omega_3  \ge \lambda_1 \lambda_2+\lambda_1 \lambda_3+\lambda_2 \lambda_3, \]
which is condition $(iv).$

Now, consider the expression
$(\lambda_1-\omega_1)(\lambda_2-\omega_1)
(\lambda_3-\omega_1).$
This is the value of $\Ch_1(\omega_1)$ and therefore, it must be the value of $\Ch_2(\omega_1)$. Hence,
\begin{multline*}
   (\lambda_1-\omega_1)(\lambda_2-\omega_1)
(\lambda_3-\omega_1) \\
=a_{12} a_{23} a_{31} + a_{13} a_{21} a_{32}
+a_{13} a_{31}(\omega_1-\omega_2)
+a_{23} a_{32}(\omega_1-\omega_3),
  \end{multline*}
which is nonnegative  because $\omega_1$ is larger than or equal to $\omega_2$ and $\omega_3$ by assumption. From $\lambda_1 \ge \lambda_2 \ge\lambda_3$, we have $\omega_1 \le \lambda_3$ or $\lambda_1 \geq \omega_1 \ge \lambda_2$. If $\omega_1 \le \lambda_3$, then for $\omega_1+ \omega_2+ \omega_3 = \lambda_1 + \lambda_2 +\lambda_3$ to be true, it must be that $\omega_1= \omega_2= \omega_3 = \lambda_1 = \lambda_2 =\lambda_3$. Hence, this case is resolved in the case that $\lambda_1 \geq \omega_1 \ge \lambda_2$, which is condition $(ii).$ Therefore, a nonnegative matrix with eigenvalues $\Lambda$ and  diagonal entries $\Omega$
satisfies  conditions $(i)$--$(iv)$.

Now, suppose that lists $\Lambda$ and  $\Omega$ satisfy conditions $(i)$--$(iv)$.
Let $B$ be the following matrix
\[ B=\left(\begin{array}{cccccccc}
    \omega_1 & 0 & (\lambda_1-\omega_1)(\lambda_2-\omega_1)
         (\lambda_3-\omega_1)\\
     1 & \omega_2 & \omega_1 \omega_2+\omega_1 \omega_3+\omega_2 \omega_3 - \lambda_1 \lambda_2-\lambda_1 \lambda_3 -\lambda_2 \lambda_3   \\
     0 & 1 & \omega_3
            \end{array}\right).
\]
Then, $B$ is nonnegative by conditions $(i),\, (ii)$, and $(iv)$. The characteristic polynomial $\Ch_3(\lambda)$ of $B$ is factored as
\[
 \Ch_3(\lambda)=\det(B-\lambda I)
 =(\lambda_1-\lambda)(\lambda_2-\lambda)
(\lambda_3-\lambda),
 \]
where we use condition $(iii)$ for factorization. Therefore, the eigenvalues of $B$ are $\Lambda$, and this concludes statement \ref{enum:rg1}.

\item Assume that statement \ref{enum:rg1} holds.
    We first show that if number $\omega_1$ is within range (\ref{eq:rgw1}), then taking numbers $\omega_2$ and $\omega_3$ with expression (\ref{eq:rgw2}), list $\Omega$ satisfies conditions $(i)$--$(iv)$. Second, we show that if number $\omega_1$ is out of range (\ref{eq:rgw1}), $\Omega$ does not satisfy them.

    Suppose that $\Omega$ is within ranges (\ref{eq:rgw1}) and (\ref{eq:rgw2}).
    Then, we have  $\omega_1-\omega_2=\frac{1}{2}(3 \omega_1- \lambda_1 - \lambda_2-\lambda_3),$ which is nonnegative by (\ref{eq:rgw1}). Hence, we obtain $\omega_1 \ge \omega_2 \ge \omega_3$. From an $\omega_1$ within range (\ref{eq:rgw1}), we may compute that
    \[ \max \{ 0, \frac{1}{2}(\lambda_2+\lambda_3)\} \le \omega_3 \le \min \{\frac{1}{3}(\lambda_1+ \lambda_2+\lambda_3), \frac{1}{2}(\lambda_1+\lambda_3) \}. \]
    Hence, $\omega_3 \ge 0 $ and therefore, condition $(i)$ holds.
    For a number $\omega_1$ within range (\ref{eq:rgw1}), we have $\lambda_2 \le \omega_1 \le \lambda_1$, which is condition $(ii)$. Furthermore, a direct summation of $\omega_1, \omega_2$, and $\omega_3$ produces condition $(iii)$.
    From numbers $\omega_1, \omega_2$, and $\omega_3$ within ranges (\ref{eq:rgw1}) and (\ref{eq:rgw2}), we obtain
 \begin{multline*}
    \max \{-\lambda_1 \lambda_2-\lambda_1 \lambda_3
    -\lambda_2 \lambda_3, \frac{1}{4}(\lambda_2-\lambda_3)^2 \} \\
    \le \omega_1 \omega_2+\omega_1 \omega_3+\omega_2 \omega_3 -\lambda_1 \lambda_2-\lambda_1 \lambda_3 -\lambda_2 \lambda_3 .
 \end{multline*}
    Since $\max \{-\lambda_1 \lambda_2-\lambda_1 \lambda_3
    -\lambda_2 \lambda_3, \frac{1}{4}(\lambda_2-\lambda_3)^2 \} $ is nonnegative, condition $(iv)$ is satisfied.  Therefore, if list $\Omega$ is within ranges (\ref{eq:rgw1}) and (\ref{eq:rgw2}), then $\Omega$ satisfies conditions $(i)$--$(iv)$.

    Now, consider the case when $\omega_1$ is out of range (\ref{eq:rgw1}). If $\omega_1 < \frac{1}{3}(\lambda_1 +\lambda_2+\lambda_3)$, then
    \[ \lambda_1 +\lambda_2+\lambda_3 > 3 \omega_1 \ge \omega_1+ \omega_2+\omega_3,
    \]
    which violates condition $(iii)$.
    If $\omega_1 < \lambda_2$, then it violates condition $(ii)$.
    If $\omega_1 > \lambda_1 +\lambda_2+\lambda_3$, then since $\omega_2$ and $\omega_3$ are nonnegative, $\omega_1+\omega_2 +\omega_3 > \lambda_1 + \lambda_2+ \lambda_3$, which violates condition $(iii)$. If $\omega_1 > \lambda_1$, then it violates condition $(ii)$.
    Therefore, if $\omega_1$ is out of range (\ref{eq:rgw1}), then $\Omega$ does not satisfy one of conditions $(i)$--$(iv)$. Therefore, we conclude statement \ref{enum:rg3}.

\item From statement \ref{enum:rg1},  statement \ref{enum:rg4} is equivalent to the following: for a given list $\Lambda$, there is a list $\Omega$ with conditions
 $(i)$--$(iv)$ of statement \ref{enum:rg1} if and only if $\Lambda$ satisfies conditions $\lambda_1 +\lambda_3 \ge 0$ and $\lambda_1 + \lambda_2 +\lambda_3 \ge 0.$ Hence, we give a proof for this statement.

    Suppose that there is a list $\Omega$ that satisfies conditions $(i)$--$(iv)$ of statement \ref{enum:rg1}. Since the elements of $\Omega$ are nonnegative, by condition $(iii)$ of statement \ref{enum:rg1}, we have $\lambda_1 + \lambda_2 +\lambda_3 \ge 0 $. Again, from condition $(iii)$ of statement \ref{enum:rg1}, we have $\lambda_1 + \lambda_3 = (\omega_1-\lambda_2)+\omega_2+\omega_3$. Then, by conditions $(i)$ and $(ii)$ of statement \ref{enum:rg1}, we obtain $\lambda_1 + \lambda_3 \ge 0$.

    Conversely, suppose that $\lambda_1 +\lambda_3 \ge 0$ and $\lambda_1 + \lambda_2 +\lambda_3 \ge 0. $ Then, we may take values $\omega_1, \omega_2$, and $\omega_3$ to be in ranges (\ref{eq:rgw1}) and (\ref{eq:rgw2}). For example, let $\omega_1=\min\{\lambda_1 +\lambda_2+\lambda_3, \lambda_1\}$ and $\omega_2=\omega_3=\frac{1}{2}(\lambda_1 +\lambda_2+\lambda_3 -\omega_1)$. Then, by statement \ref{enum:rg3}, $\Omega$ satisfies conditions $(i)$--$(iv)$ of  statement \ref{enum:rg1}. Hence, we conclude statement \ref{enum:rg4}.

\end{enumerate}
\end{proof}

%
%
Consider the case of complex eigenvalues. When we say a list of complex numbers, we mean that the list has at least one non-real complex number.    When a list $\Lambda=\{\lambda_1, \lambda_2, \lambda_3\}$ of complex numbers is realizable as the eigenvalues of a nonnegative matrix, from the Perron-Frobenius theorem, we have that $ \lambda_1 \ge |\lambda_2| \ge |\lambda_3|$. From now on, when we treat a complex number list, we make the hypothesis that $ \lambda_1 \ge |\lambda_2| \ge |\lambda_3|$.

\begin{theorem}[complex, general case]\label{the:cg}
  Let $\Lambda=\{\lambda_1, \lambda_2, \lambda_3\}$ be a list of complex numbers with $\lambda_1 \ge |\lambda_2| \ge |\lambda_3|$ and let
$\Omega=\{\omega_1, \omega_2, \omega_3\}$ be a list of real numbers with $\omega_1 \ge\omega_2\ge \omega_3$.

\begin{enumerate}
\item\label{enum:cg1} There is a nonnegative matrix with eigenvalues $\Lambda$ and diagonal entries
$\Omega$ if and only if lists $\Lambda$ and $\Omega$ satisfy
 \begin{description}
    \item[] (i) $\omega_i \ge 0$ for $i= 1, 2, \hbox{\ and\ } 3$,
    \item[] (ii) $\lambda_1 \geq \omega_1$,
    \item[] (iii) $\omega_1+~\omega_2 +\omega_3 = \lambda_1 + \lambda_2+ \lambda_3$,
    \item[] (iv) $\omega_1 \omega_2+\omega_1 \omega_3+\omega_2 \omega_3 \ge \lambda_1 \lambda_2+\lambda_1 \lambda_3 +\lambda_2 \lambda_3$.
 \end{description}

\item\label{enum:cg3} If $\Lambda$ and $\Omega$ satisfy the conditions of statement \ref{enum:cg1}, the exact range for $\omega_1$ is
 \begin{equation}\label{eq:cgw1}
        \frac{1}{3}(\lambda_1 +\lambda_2+\lambda_3) \le \omega_1 \le \min\{\lambda_1 +\lambda_2+\lambda_3, U_1 \},
 \end{equation}
 where
 \begin{equation}\label{eq:U1}
 U_1= \frac{1}{3}(\lambda_1 +\lambda_2+\lambda_3) + \frac{2}{3} \sqrt{\lambda_1^2 +\lambda_2^2 +\lambda_3^2 - \lambda_1 \lambda_2 -\lambda_1 \lambda_3 - \lambda_2 \lambda_3}.
 \end{equation}
 For the values for $\omega_2$ and $\omega_3$, we may take
        \begin{equation}\label{eq:cgw2}
        \omega_2=\omega_3=\frac{1}{2}(\lambda_1 +\lambda_2+\lambda_3 -\omega_1).
        \end{equation}

\item \cite{LoewyLondon1978}\label{enum:cg4} The list $\Lambda$ is realizable as the eigenvalues of a nonnegative matrix
if and only if the following hold:
    \begin{description}
        \item[] (i) $\lambda_1 \ge 0$ and $\lambda_2 =  \overline{\lambda_3}$,
        \item[] (ii) $\lambda_1 +\lambda_2+\lambda_3 \ge 0$,
        \item[] (iii) $\lambda_1^2 +\lambda_2^2 +\lambda_3^2 \ge  \lambda_1 \lambda_2 +\lambda_1 \lambda_3 + \lambda_2 \lambda_3$.
    \end{description}

\end{enumerate}
\end{theorem}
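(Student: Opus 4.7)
The plan is to mimic the three-step proof of Theorem~\ref{the:rg} step-by-step, noting where the complex nature of $\lambda_2, \lambda_3$ forces simplifications or additional algebra.

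For statement~\ref{enum:cg1}, I would repeat the ``two forms of the characteristic polynomial'' computation: writing the matrix as $A=(a_{ij})$ with $a_{ii}=\omega_i$ and equating the two expressions for the characteristic polynomial yields conditions $(iii)$ and $(iv)$ from matching the coefficients of $\lambda^2$ and $\lambda$, while $(i)$ is immediate. The only essential change is in deriving $(ii)$. Evaluating at $\omega_1$ gives
\[
(\lambda_1-\omega_1)(\lambda_2-\omega_1)(\lambda_3-\omega_1) = \Ch_2(\omega_1) \ge 0,
\]
but now $\lambda_2,\lambda_3$ form a conjugate pair (forced by the real coefficients of the characteristic polynomial together with the hypothesis that the list contains a non-real number), so $(\lambda_2-\omega_1)(\lambda_3-\omega_1)=|\lambda_2-\omega_1|^2\ge 0$ automatically and one extracts only $\omega_1\le\lambda_1$; there is no analogue of the ``$\omega_1\ge\lambda_2$'' bound from the real case. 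For sufficiency, the same explicit matrix $B$ used in the proof of Theorem~\ref{the:rg}\,\ref{enum:rg1} works: its $(1,3)$-entry equals $(\lambda_1-\omega_1)|\lambda_2-\omega_1|^2\ge 0$ by $(ii)$, the other entries are nonnegative by $(i),(iv)$, and factoring the characteristic polynomial via $(iii)$ recovers $\Lambda$.

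For statement~\ref{enum:cg3}, I would substitute the candidate values~\eqref{eq:cgw2} into $(i)$--$(iv)$ and read off the one-variable constraints on $\omega_1$. Writing $S=\lambda_1+\lambda_2+\lambda_3$ and $q=\lambda_1\lambda_2+\lambda_1\lambda_3+\lambda_2\lambda_3$, conditions $(i)$, $(iii)$ together with $\omega_1\ge\omega_2$ collapse to $\tfrac{S}{3}\le\omega_1\le S$; condition $(ii)$ gives $\omega_1\le\lambda_1$; and $(iv)$, after expansion, becomes the quadratic inequality
\[
3\omega_1^2 - 2S\omega_1 - S^2 + 4q \le 0,
\]
whose larger root is exactly the $U_1$ of~\eqref{eq:U1}, since the discriminant equals $4(S^2-3q)=4(\lambda_1^2+\lambda_2^2+\lambda_3^2-\lambda_1\lambda_2-\lambda_1\lambda_3-\lambda_2\lambda_3)$. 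The step I expect to be the main obstacle is verifying that the bound $\omega_1\le\lambda_1$ coming from $(ii)$ is subsumed by $\omega_1\le U_1$, so that it may be dropped from the displayed min; writing $\lambda_2=a+bi$, $\lambda_3=a-bi$, this reduces to $\sqrt{(\lambda_1-a)^2-3b^2}\le\lambda_1-a$, which holds since $\lambda_1\ge|\lambda_2|\ge a$ forces $\lambda_1-a\ge 0$. Showing that any $\omega_1$ outside~\eqref{eq:cgw1} violates one of $(i)$--$(iv)$ is then routine case analysis, parallel to the treatment in Theorem~\ref{the:rg}\,\ref{enum:rg3}.

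Finally, statement~\ref{enum:cg4} follows from the previous two in the same way that Theorem~\ref{the:rg}\,\ref{enum:rg4} was deduced: realizability of $\Lambda$ is equivalent to the existence of some $\Omega$ satisfying $(i)$--$(iv)$ of statement~\ref{enum:cg1}. For necessity, $\lambda_1\ge 0$ and $\lambda_2=\overline{\lambda_3}$ follow from the Perron--Frobenius theorem and from the realness of the characteristic polynomial's coefficients; $\lambda_1+\lambda_2+\lambda_3\ge 0$ is conditions $(i)$ and $(iii)$ of statement~\ref{enum:cg1} combined; and $\lambda_1^2+\lambda_2^2+\lambda_3^2\ge\lambda_1\lambda_2+\lambda_1\lambda_3+\lambda_2\lambda_3$ is precisely the JLL inequality $3\,s_2(\Lambda)\ge s_1(\Lambda)^2$ specialized to $n=3,\,k=2,\,m=1$. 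For sufficiency, set $\omega_1=\min\{S,U_1\}$ and $\omega_2=\omega_3=(S-\omega_1)/2$ as in~\eqref{eq:cgw2}; statement~\ref{enum:cg3} certifies that this $\Omega$ meets $(i)$--$(iv)$, and statement~\ref{enum:cg1} then produces the realizing nonnegative matrix.
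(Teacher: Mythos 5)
Your outline reproduces the paper's argument in all three parts: the characteristic-polynomial comparison for statement~\ref{enum:cg1} with the explicit matrix $B$, the reduction to the quadratic in $\omega_1$ (via the choice $\omega_2=\omega_3=(S-\omega_1)/2$) for statement~\ref{enum:cg3}, and the deduction of statement~\ref{enum:cg4} from the first two, with the same check that $U_1\le\lambda_1$ so the Perron bound $\omega_1\le\lambda_1$ is subsumed. One cosmetic slip: the discriminant of $3\omega_1^2-2S\omega_1-S^2+4q$ is $16(S^2-3q)$, not $4(S^2-3q)$, though the resulting larger root $\tfrac{S}{3}+\tfrac{2}{3}\sqrt{S^2-3q}=U_1$ that you report is correct.
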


\label{enum:cg2} For $\Lambda$ and $\Omega$ satisfying the conditions of statement \ref{enum:cg1}, we find the following nonnegative matrix
  \[  \left(\begin{array}{cccccccc}
    \omega_1 & 0 & (\lambda_1-\omega_1)(\lambda_2-\omega_1)
         (\lambda_3-\omega_1)\\
     1 & \omega_2 & \omega_1 \omega_2+\omega_1 \omega_3+\omega_2 \omega_3 - \lambda_1 \lambda_2-\lambda_1 \lambda_3 -\lambda_2 \lambda_3   \\
     0 & 1 & \omega_3
            \end{array}\right)
   \]
 with eigenvalues $\Lambda$ and diagonal entries $\Omega$.

\begin{proof}
\begin{enumerate}
\item
    The proof of statement \ref{enum:cg1}
is almost identical to that of statement \ref{enum:rg1} of Theorem \ref{the:rg},
even though the list $\Lambda$ contains complex numbers.

\item Assuming that the statement \ref{enum:cg1} holds,
    we first show that if the number $\omega_1$ is within range (\ref{eq:cgw1}), then taking the numbers $\omega_2$ and $\omega_3$ with expression (\ref{eq:cgw2}), the list $\Omega$ satisfies conditions $(i)$--$(iv)$. Second, we show that when $\omega_1$ is out of range (\ref{eq:cgw1}), $\Omega$ does not satisfy them.

    Suppose that $\Omega$ is within range (\ref{eq:cgw1}) and (\ref{eq:cgw2}). From expression (\ref{eq:cgw2}),
    $\omega_1-\omega_2=\frac{1}{2}(3 \omega_1- \lambda_1 - \lambda_2-\lambda_3)$,
which is nonnegative by (\ref{eq:cgw1}). Hence, we obtain $\omega_1 \ge \omega_2 \ge \omega_3$.
For $\omega_1$ within range (\ref{eq:cgw1}), we may compute that
    \begin{multline*}
    \max \{ 0, \frac{1}{3}\left(\lambda_1 +\lambda_2+\lambda_3 -\sqrt{\lambda_1^2 +\lambda_2^2 +\lambda_3^2 - \lambda_1 \lambda_2 -\lambda_1 \lambda_3 - \lambda_2 \lambda_3}\right)\} \\
    \le \omega_3 \le  \frac{1}{3}(\lambda_1+ \lambda_2+\lambda_3).
 \end{multline*}
    Hence, $\omega_3 \ge 0$, and therefore, condition $(i)$ holds.

    When $\omega_1$ is within range (\ref{eq:cgw1}), we have
    $ \omega_1 \le  U_1.$
    Furthermore,  $ U_1 \le \lambda_1$ because
    \begin{equation*} \lambda_1 - U_1 \\ =
    \frac{2}{3} \lambda_1 -  \frac{1}{3} \lambda_2 - \frac{1}{3} \lambda_3 -  \frac{2}{3}\sqrt{\lambda_1^2 +\lambda_2^2 +\lambda_3^2 - \lambda_1 \lambda_2 -\lambda_1 \lambda_3 - \lambda_2 \lambda_3},
    \end{equation*} and
    \begin{equation*} \left(\frac{2}{3} \lambda_1 -  \frac{1}{3} \lambda_2 - \frac{1}{3} \lambda_3\right)^2 -  \left(\frac{2}{3}\sqrt{\lambda_1^2 +\lambda_2^2 +\lambda_3^2 - \lambda_1 \lambda_2 -\lambda_1 \lambda_3 - \lambda_2 \lambda_3}\right)^2 \\ =       - (\lambda_2 - \lambda_3)^2,
    \end{equation*}
    which is nonnegative from the equality $\lambda_2 =  \overline{\lambda_3}$. 
     Hence, we have $\omega_1 \le \lambda_1$, and therefore, condition $(ii)$ holds.
    The summation of $\omega_1, \omega_2$, and $\omega_3$ gives us condition $(iii)$.
    From the numbers $\omega_1, \omega_2$, and $\omega_3$, we obtain
 \begin{multline*}
    \max \{-\lambda_1 \lambda_2-\lambda_1 \lambda_3
    -\lambda_2 \lambda_3, 0 \} \\
    \le \omega_1 \omega_2+\omega_1 \omega_3+\omega_2 \omega_3 -\lambda_1 \lambda_2-\lambda_1 \lambda_3 -\lambda_2 \lambda_3 \le \\
    \frac{1}{3}(\lambda_1^2 +\lambda_2^2 +\lambda_3^2 -\lambda_1 \lambda_2-\lambda_1 \lambda_3
    -\lambda_2 \lambda_3).
 \end{multline*}
    Thus, condition $(iv)$ is satisfied. Therefore, if the list $\Omega$ is within ranges (\ref{eq:cgw1}) and (\ref{eq:cgw2}),
then $\Omega$ satisfies conditions $(i)$--$(iv)$.

    Now, we consider the case when $\omega_1$ is out of range (\ref{eq:cgw1}).
If $\omega_1 < \frac{1}{3}(\lambda_1 +\lambda_2+\lambda_3)$, then
    \[ \lambda_1 +\lambda_2+\lambda_3 > 3 \omega_1 \ge \omega_1+ \omega_2+\omega_3,
    \]
    which violates condition $(iii)$.
    If $\omega_1 > \lambda_1 +\lambda_2+\lambda_3$, then, because $\omega_2$ and $\omega_3$ are nonnegative,  condition $(iii)$ is violated.

    Now, let us consider the case when $\omega_1 >  U_1$.
    Using $\omega_3 = \lambda_1 +\lambda_2+\lambda_3-\omega_1-\omega_2$, we have
    \begin{multline*}
    \omega_1 \omega_2+\omega_1 \omega_3+\omega_2 \omega_3 - \lambda_1 \lambda_2-\lambda_1 \lambda_3 -\lambda_2 \lambda_3 \\
    =
    -\omega_2^2 +(\lambda_1 + \lambda_2+ \lambda_3 -\omega_1)\omega_2 \\
    +(\lambda_1 + \lambda_2+ \lambda_3)\omega_1
    -(\lambda_1 \lambda_2 +\lambda_1 \lambda_3 + \lambda_2 \lambda_3+\omega_1^2).
    \end{multline*}
    From statement $(iii)$, we have $ \omega_1+2\omega_2 \ge \lambda_1 + \lambda_2+ \lambda_3$, hence $\omega_2 \ge \frac{1}{2}
    (\lambda_1 + \lambda_2+ \lambda_3-\omega_1)$.
    Applying this inequality to the above expression, we have
    \begin{multline*}
     \omega_1 \omega_2+\omega_1 \omega_3+\omega_2 \omega_3 - \lambda_1 \lambda_2-\lambda_1 \lambda_3 -\lambda_2 \lambda_3 \\
     \le -\frac{3}{4} \left( \omega_1^2 -\frac{2}{3}(\lambda_1 + \lambda_2+ \lambda_3)\omega_1 -\frac{1}{3}(\lambda_1^2 +\lambda_2^2 +\lambda_3^2 -2 \lambda_1 \lambda_2 -2 \lambda_1 \lambda_3 -2 \lambda_2 \lambda_3)\right)\\
     =- \frac{3}{4} \left(\omega_1 - \frac{1}{3}
     (\lambda_1 +\lambda_2+\lambda_3) + \frac{2}{3} \sqrt{\lambda_1^2 +\lambda_2^2 +\lambda_3^2 - \lambda_1 \lambda_2 -\lambda_1 \lambda_3 - \lambda_2 \lambda_3}\right) \\
     \left(\omega_1 - \frac{1}{3}
     (\lambda_1 +\lambda_2+\lambda_3) -\frac{2}{3} \sqrt{\lambda_1^2 +\lambda_2^2 +\lambda_3^2 - \lambda_1 \lambda_2 -\lambda_1 \lambda_3 - \lambda_2 \lambda_3}\right).
    \end{multline*}
    Hence, for $\omega_1 > U_1$, we have
    $\omega_1 \omega_2+\omega_1 \omega_3+\omega_2 \omega_3 - \lambda_1 \lambda_2-\lambda_1 \lambda_3 -\lambda_2 \lambda_3 <0$, which violates condition $(iv)$ of statement \ref{enum:cg1}. Therefore, if $\omega_1$ is out of range (\ref{eq:cgw1}), then $\Omega$ does not satisfy conditions
 $(i)$--$(iv)$. Hence, we conclude statement \ref{enum:cg3}.

\item From statement \ref{enum:cg1}, statement \ref{enum:cg4} is equivalent to
the following: for a given list $\Lambda$, there is a list $\Omega$ with conditions
$(i)$--$(iv)$ of statement \ref{enum:cg1} if and only if $\Lambda$ satisfies conditions $(i)$--$(iii)$ of
statement \ref{enum:cg4}. Hence, we give a proof for this statement.

    Suppose that $\Omega$ satisfies conditions $(i)$--$(iv)$ of statement \ref{enum:cg1}.
By condition $(i)$ of statement \ref{enum:cg1}, we have $\lambda_1 \ge 0$.
Since $\lambda_1 +\lambda_2+\lambda_3$ is real, $\Im(\lambda_2)=-\Im(\lambda_3)$.
Since $0=\Im(\lambda_1 \lambda_2 +\lambda_1 \lambda_3 + \lambda_2 \lambda_3)=(\Re(\lambda_2)-\Re(\lambda_3)) \Im(\lambda_2)$
and $\Lambda$ contains complex conjugates, we conclude that $\Re(\lambda_2)=\Re(\lambda_3)$.
Therefore, $\lambda_2 =  \overline{\lambda_3}$, which is condition $(i)$. Since the elements of $\Omega$ are nonnegative,
we have $\lambda_1 + \lambda_2 +\lambda_3 \ge 0 $, which is condition $(ii)$.

    Again, using condition $(iii)$ of statement \ref{enum:cg1}, we have
    \[ \begin{aligned}
    \lambda_1^2 +\lambda_2^2 +\lambda_3^2 & - \lambda_1 \lambda_2 -\lambda_1 \lambda_3 - \lambda_2 \lambda_3 \\
    & =(\lambda_1 +\lambda_2 +\lambda_3)^2 -
    3(\lambda_1 \lambda_2 +\lambda_1 \lambda_3 + \lambda_2 \lambda_3) \\
    & = (\omega_1 +\omega_2+ \omega_3)^2
    -    3(\lambda_1 \lambda_2 +\lambda_1 \lambda_3 + \lambda_2 \lambda_3) \\
    & =\frac{1}{2}\left((\omega_1 -\omega_2)^2 +(\omega_1-\omega_3)^2 +(\omega_2 -\omega_3)^2\right) \\
    & \qquad +3(\omega_1 \omega_2+\omega_1 \omega_3+\omega_2 \omega_3 - \lambda_1 \lambda_2-\lambda_1 \lambda_3 -\lambda_2 \lambda_3),
    \end{aligned} \]
    which is nonnegative by condition $(iv)$ of statement \ref{enum:cg1}.
    Therefore, $\Lambda$ satisfies conditions $(i)$--$(iii)$ of statement \ref{enum:cg4}.

    Conversely, suppose that $\Lambda$ satisfies conditions $(i)$--$(iii)$ of statement \ref{enum:cg4}.
Then, we may take values $\omega_1, \omega_2$, and $\omega_3$ from ranges (\ref{eq:cgw1}) and (\ref{eq:cgw2}).
For example, take $\omega_1=\omega_2=\omega_3=\frac{1}{3}(\lambda_1 +\lambda_2+\lambda_3)$.
Then, by statement \ref{enum:cg3}, $\Omega$ satisfies conditions
$(i)$--$(iv)$ of statement \ref{enum:cg1}. Hence, we conclude statement \ref{enum:cg4}.

\end{enumerate}
\end{proof}

From statement \ref{enum:cg1} of Theorem \ref{the:cg},
we may write $\lambda_1=a, \lambda_2 =b+ci$ and $\lambda_3=b-ci$
where $a, b$, and $c$ are real numbers. We restate statements \ref{enum:cg3}
and \ref{enum:cg4} of Theorem \ref{the:cg} in terms of $a, b$, and $c$.
Note that $\Lambda$ is now $\{ a, b+c i, b-c i \}$.

\begin{corollary}
\begin{enumerate}
\item The list $\Lambda$ is realizable as the eigenvalues of a nonnegative matrix if and only
if the following hold:
    \begin{description}
        \item[] (i) $a \ge 0$,
        \item[] (ii) $-\frac{a}{2}  \le b \le a$,
        \item[] (iii) $(a-b)^2 \ge 3c^2 $.
    \end{description}

\item The exact range for $\omega_1$ is
    \[
    \frac{1}{3}(a+2b) \le \omega_1 \le \min\{a+2b, \frac{1}{3}(a+2b)+\frac{2}{3}\sqrt{(a-b)^2-3c^2}\}.
    \]
\end{enumerate}
\end{corollary}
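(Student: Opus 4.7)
The plan is to derive both parts of the corollary by direct substitution of $\lambda_1 = a$, $\lambda_2 = b + ci$, and $\lambda_3 = b - ci$ into Theorem \ref{the:cg}. The key preliminary computations are the elementary symmetric polynomials
\[
\lambda_1 + \lambda_2 + \lambda_3 = a + 2b, \qquad \lambda_1\lambda_2 + \lambda_1\lambda_3 + \lambda_2\lambda_3 = 2ab + b^2 + c^2,
\]
from which one obtains
\[
\lambda_1^2 + \lambda_2^2 + \lambda_3^2 - \lambda_1\lambda_2 - \lambda_1\lambda_3 - \lambda_2\lambda_3 = (a-b)^2 - 3c^2.
\]

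For statement~1, I would translate the three conditions of statement~\ref{enum:cg4} of Theorem~\ref{the:cg}: the requirements $\lambda_1 \ge 0$ and $\lambda_2 = \overline{\lambda_3}$ become $a \ge 0$ (with the conjugate condition automatic); $\lambda_1 + \lambda_2 + \lambda_3 \ge 0$ becomes $b \ge -a/2$; and the power-sum inequality becomes $(a-b)^2 \ge 3c^2$. The subtle point is that Theorem~\ref{the:cg} is stated under the standing hypothesis $\lambda_1 \ge |\lambda_2|$, which here amounts to $a \ge \sqrt{b^2+c^2}$. I must check that, in the presence of the other conditions, this is equivalent to the clean inequality $b \le a$ appearing in corollary~(ii). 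One direction is immediate, since $a \ge \sqrt{b^2+c^2} \ge |b| \ge b$; for the other I would use the algebraic identity
\[
a^2 - b^2 - c^2 \ge a^2 - b^2 - \tfrac{1}{3}(a-b)^2 = \tfrac{2}{3}(a-b)(a+2b),
\]
whose right-hand side is nonnegative because $a \ge b$ and $a + 2b \ge 0$ are both encoded in corollary conditions~(i) and~(ii).

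For statement~2, a direct substitution of the expressions computed above into the bounds (\ref{eq:cgw1}) and (\ref{eq:U1}) of Theorem~\ref{the:cg} yields $\frac{1}{3}(a+2b) \le \omega_1 \le \min\{a+2b,\ \frac{1}{3}(a+2b) + \frac{2}{3}\sqrt{(a-b)^2-3c^2}\}$ with no further calculation. The only real obstacle in the whole proof is the brief algebraic verification that corollary conditions (i)--(iii) together imply the Perron--Frobenius bound $a \ge \sqrt{b^2+c^2}$; once that is done, everything else is mechanical substitution into previously established formulas.
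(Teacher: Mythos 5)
Your proposal is correct and follows the same route the paper takes implicitly: substitute $\lambda_1 = a$, $\lambda_2 = b+ci$, $\lambda_3 = b-ci$ into Theorem~\ref{the:cg} and simplify the symmetric functions. The paper presents the corollary as a bare restatement with no proof, so you have in fact supplied the one genuinely non-mechanical step that the paper glosses over: the theorem is stated under the Perron--Frobenius standing hypothesis $\lambda_1 \ge |\lambda_2|$, i.e.\ $a \ge \sqrt{b^2+c^2}$, whereas the corollary replaces this with the cleaner $b \le a$. Your verification that these are equivalent in the presence of $a \ge 0$, $a+2b \ge 0$, and $(a-b)^2 \ge 3c^2$ --- via $a^2 - b^2 - c^2 \ge a^2 - b^2 - \tfrac{1}{3}(a-b)^2 = \tfrac{2}{3}(a-b)(a+2b) \ge 0$ --- is exactly right and is the detail a careful reader of the paper would need to fill in. The remainder of your argument (the symmetric-function computations and the substitution into \eqref{eq:cgw1} and \eqref{eq:U1}) is routine and correct.
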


%
%
%

\section{Symmetric $3 \times 3$ nonnegative matrices} \label{sec:Symmetric}

We discuss a symmetric $3 \times 3$ nonnegative matrix case with prescribed eigenvalues and diagonal entries.  The following theorem was proved by
Fieldler \cite{Fiedler1974} in 1974, except for statement \ref{enum:sy3}, but we include here the theorem and proof to compare the other results and for completeness.

\begin{theorem}[symmetric case]
  \label{the:sy}
  Let $\Lambda=\{\lambda_1, \lambda_2, \lambda_3\}$ and $\Omega=\{\omega_1, \omega_2, \omega_3\}$
be lists of real numbers with $\lambda_1 \ge \lambda_2 \ge\lambda_3$ and $\omega_1 \ge\omega_2\ge \omega_3$.

\begin{enumerate}
\item\label{enum:sy1}\cite{Fiedler1974} There is a symmetric nonnegative matrix with eigenvalues $\Lambda$ and diagonal entries $\Omega$ if and only if
$\Lambda$ and $\Omega$ satisfy
 \begin{description}
    \item [] (i) $\omega_i \ge 0$ for $i= $ 1, 2, and 3,
    \item [] (ii) $\lambda_1 \geq \omega_1 \ge \lambda_2$,
    \item [] (iii) $\lambda_1 + \lambda_2 \geq \omega_1+\omega_2 $,
    \item [] (iv) $\lambda_1 + \lambda_2+ \lambda_3 =\omega_1+\omega_2 +\omega_3$.
 \end{description}

 \item\label{enum:sy3} If $\Lambda$ and $\Omega$ satisfy the conditions of statement \ref{enum:sy1},
then the exact range for $\omega_1$ is       \begin{equation}\label{eq:syw1}
        \max\{\frac{1}{3}(\lambda_1 +\lambda_2+\lambda_3), \lambda_2\} \le \omega_1 \le \min\{\lambda_1 +\lambda_2+\lambda_3, \lambda_1\}.
        \end{equation}
For the values for $\omega_2$ and $\omega_3$, we may take
        \begin{equation}\label{eq:syw2}
        \omega_2=\omega_3=\frac{1}{2}(\lambda_1 +\lambda_2+\lambda_3 -\omega_1).
        \end{equation}

\item\label{enum:sy4}\cite{Fiedler1974} The list $\Lambda$ is realizable as the eigenvalues of a symmetric nonnegative matrix if and only if the following hold:
    \begin{description}
        \item[] (i) $\lambda_1 +\lambda_3 \ge 0$,
        \item[] (ii) $\lambda_1 + \lambda_2 +\lambda_3 \ge 0. $
    \end{description}

 \end{enumerate}
\end{theorem}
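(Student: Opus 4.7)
The plan is to follow the three-part scheme of Theorem~\ref{the:rg}. I will first establish statement~\ref{enum:sy1}, then deduce statement~\ref{enum:sy3} by specializing to $\omega_2=\omega_3$, and finally obtain statement~\ref{enum:sy4} as a corollary.

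For the necessity direction of statement~\ref{enum:sy1}, let $A$ be a symmetric nonnegative realization with $\omega_1=a_{11}$ after a permutation similarity. Conditions (i) and (iv) are immediate, and (iii) is the Schur--Horn majorization inequality applied to the ordered diagonal of a symmetric matrix. The upper bound $\omega_1\le\lambda_1$ in (ii) is the Rayleigh quotient estimate $\omega_i=e_i^T A e_i\le\lambda_1$. The lower bound $\omega_1\ge\lambda_2$ uses nonnegativity in a crucial way: direct expansion along the first row gives
\[
 \Ch(\omega_1) \;=\; -\bigl[a_{12}^2(\omega_1-\omega_3)\;+\;2a_{12}a_{13}a_{23}\;+\;a_{13}^2(\omega_1-\omega_2)\bigr] \;\le\; 0,
\]
while $\Ch(\lambda)>0$ on $(\lambda_3,\lambda_2)$, so $\omega_1\notin(\lambda_3,\lambda_2)$; combined with $\omega_1\ge\omega_3\ge\lambda_3$ (Rayleigh minimum) this forces $\omega_1\in[\lambda_2,\lambda_1]$. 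For sufficiency I would construct an explicit symmetric nonnegative matrix. Matching the characteristic polynomial of the general symmetric ansatz with off-diagonals $a,b,c$ against $\prod(\lambda-\lambda_i)$ produces
\[
 a^2+b^2+c^2 \;=\; \omega_1\omega_2+\omega_1\omega_3+\omega_2\omega_3 - \lambda_1\lambda_2-\lambda_1\lambda_3-\lambda_2\lambda_3,
\]
\[
 \omega_3 a^2 + \omega_2 b^2 + \omega_1 c^2 - 2abc \;=\; \omega_1\omega_2\omega_3 - \lambda_1\lambda_2\lambda_3.
\]
The first right-hand side is nonnegative because conditions (iii) and (iv) together yield the Schur--Horn consequence $\sum\omega_i^2\le\sum\lambda_i^2$, which is exactly the inequality in question. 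When the $\omega_i$ are not all equal, I would zero out one off-diagonal (say $c=0$) and solve the resulting $2\times 2$ linear system for $a^2,b^2$, verifying nonnegativity from (i)--(iv). The confluent case $\omega_1=\omega_2=\omega_3$ collapses the arrowhead system and must be treated separately, by solving for $(a,b,c)$ on the sphere $a^2+b^2+c^2=S$ with $abc$ prescribed.

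Statement~\ref{enum:sy3} I prove exactly as in statement~\ref{enum:rg3}: with $\omega_2=\omega_3$ given by (\ref{eq:syw2}), condition (iv) is automatic, and the ordering $\omega_1\ge\omega_2=\omega_3\ge 0$ together with (ii) is equivalent to $\omega_1$ lying in (\ref{eq:syw1}); the only new check is (iii), which after substitution reduces to $\omega_1\le\lambda_1+\lambda_2-\lambda_3$. I verify this by case-splitting on the sign of $\lambda_2+\lambda_3$ against the upper bound $\min\{\lambda_1+\lambda_2+\lambda_3,\lambda_1\}$. The reverse implication is identical to that in Theorem~\ref{the:rg}. Statement~\ref{enum:sy4} is then derived from statements~\ref{enum:sy1} and~\ref{enum:sy3} in the same pattern as statement~\ref{enum:rg4}: the forward direction extracts $\lambda_1+\lambda_2+\lambda_3\ge 0$ from (iv) and $\lambda_1+\lambda_3=(\omega_1-\lambda_2)+\omega_2+\omega_3\ge 0$ from (i), (ii), (iv); the converse picks $\omega_1=\min\{\lambda_1+\lambda_2+\lambda_3,\lambda_1\}$ inside the range (\ref{eq:syw1}) and invokes statement~\ref{enum:sy3}.

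The main obstacle is the sufficiency half of statement~\ref{enum:sy1}: producing an explicit symmetric nonnegative realization uniformly in the parameters. Unlike in Theorem~\ref{the:rg}, where a single $3\times 3$ template works across all admissible $(\Lambda,\Omega)$, the symmetric setting has no uniform arrowhead or tridiagonal form; the stratum $\omega_1=\omega_2=\omega_3$ (exemplified by $\Lambda=\{3,0,0\}$ and $\Omega=\{1,1,1\}$, realized by the all-ones matrix) degenerates every arrowhead linear system and must be handled on its own. The bulk of the sufficiency argument therefore consists of a careful case analysis ensuring that the off-diagonal entries chosen from the linear system (or from the sphere-and-product system in the degenerate case) always come out nonnegative under conditions (i)--(iv).
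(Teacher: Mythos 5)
Your treatment of necessity, statement~\ref{enum:sy3}, and statement~\ref{enum:sy4} is sound and largely parallels the paper. (For necessity of (iii) the paper writes $\omega_3=u_{13}^2\lambda_1+u_{23}^2\lambda_2+u_{33}^2\lambda_3\ge\lambda_3$ directly from the orthogonal diagonalization rather than invoking Schur--Horn, but the two are equivalent in content.) The genuine gap is in the sufficiency half of statement~\ref{enum:sy1}, where you propose to zero out one off-diagonal entry, solve the resulting $2\times2$ linear system, and argue nonnegativity; you flag the degenerate stratum $\omega_1=\omega_2=\omega_3$ as the only obstruction. That diagnosis is wrong: the zero-off-diagonal plan already fails in the fully non-degenerate regime. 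Take $\Lambda=\{3,1,1\}$ and $\Omega=\{2,\tfrac53,\tfrac43\}$, which satisfies (i)--(iv). Here $S:=\sum\omega_i\omega_j-\sum\lambda_i\lambda_j=\tfrac{11}{9}$ and $T:=\omega_1\omega_2\omega_3-\lambda_1\lambda_2\lambda_3=\tfrac{13}{9}$. Zeroing $a_{12}$ gives $c^2=\tfrac{T-\omega_2 S}{\omega_1-\omega_2}=-\tfrac{16}{9}$, zeroing $a_{13}$ gives $c^2=\tfrac{T-\omega_3 S}{\omega_1-\omega_3}=-\tfrac{5}{18}$, and zeroing $a_{23}$ gives $b^2=\tfrac{T-\omega_3 S}{\omega_2-\omega_3}=-\tfrac{5}{9}$. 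All three choices produce a negative square, so no symmetric nonnegative realization of this admissible pair has a vanishing off-diagonal, and your case analysis cannot close.

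Your premise that \emph{``the symmetric setting has no uniform arrowhead or tridiagonal form''} is also where you and the paper part ways. Fiedler's construction, which the paper reproduces, \emph{is} a single explicit template valid for every admissible $(\Lambda,\Omega)$: set $\alpha=\lambda_1+\lambda_2-\omega_1-\omega_2$, $\beta=\lambda_1+\lambda_2-\omega_1-\omega_3$, $\gamma=(\lambda_1-\omega_1)(\omega_1-\lambda_2)$, and take
\[
B=\begin{pmatrix}\omega_1 & \sqrt{\tfrac{\beta\gamma}{\alpha+\beta}} & \sqrt{\tfrac{\alpha\gamma}{\alpha+\beta}}\\ \sqrt{\tfrac{\beta\gamma}{\alpha+\beta}} & \omega_2 & \sqrt{\alpha\beta}\\ \sqrt{\tfrac{\alpha\gamma}{\alpha+\beta}} & \sqrt{\alpha\beta} & \omega_3\end{pmatrix}.
\]
Conditions (ii) and (iii) give $\gamma\ge0$ and $\beta\ge\alpha\ge0$, so all entries are real and nonnegative, and (iv) makes the characteristic polynomial factor as $\prod(\lambda_i-\lambda)$; the boundary case $\alpha+\beta=0$ forces $\gamma=0$ and the matrix is diagonal, so no separate stratum is needed. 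In the example above this template produces off-diagonals $\sqrt{2/3}$, $\sqrt{1/3}$, $\sqrt{2}/3$, all strictly positive. Replacing your zero-off-diagonal ansatz by this formula (and verifying its nonnegativity and characteristic polynomial from (i)--(iv) as the paper does) is what is needed to complete the sufficiency argument.
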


\label{enum:sy2} For $\Lambda$ and $\Omega$ satisfying the conditions of statement \ref{enum:sy1}, we find the following symmetric nonnegative matrix
    \[ \left(\begin{array}{cccccccc}
    \omega_1 & \sqrt{\frac{\beta \gamma}{\alpha+\beta}}
    & \sqrt{\frac{\alpha \gamma}{\alpha+\beta}}\\
    \sqrt{\frac{\beta \gamma}{\alpha+\beta}}
    & \omega_2 &\sqrt{ \alpha \beta}\\
    \sqrt{\frac{\alpha \gamma}{\alpha+\beta}} & \sqrt{ \alpha \beta} & \omega_3
            \end{array}\right),
    \]
    where $\alpha = \lambda_1 + \lambda_2 - \omega_1 - \omega_2,\ \beta = \lambda_1 + \lambda_2 - \omega_1 - \omega_3$, and $\gamma = (\lambda_1 -\omega_1) (\omega_1 - \lambda_2)$, with eigenvalues $\Lambda$ and diagonal entries $\Omega$.

\begin{proof}
\begin{enumerate}
\item The proof is similar to that of statement \ref{enum:rg1} of Theorem \ref{the:rg} except for condition $(iii)$. Therefore, we give the proof related to condition $(iii)$. Assume that the list $\Lambda$ is realizable as the eigenvalues of a symmetric nonnegative matrix $A$ with diagonal entries $\Omega$. Then, there is an orthogonal matrix $U=\{ u_{ij} \}$ such that $A=U^{\text{tr}} [\Lambda] U$ where $[\Lambda]$ is the diagonal matrix with diagonal entries $\Lambda$. Then, in particular, we have
    \[ \omega_3 = u_{13}^2 \lambda_1 + u_{23}^2 \lambda_2 + u_{33}^2 \lambda_3. \]
    Since $\lambda_1 \ge \lambda_2 \ge\lambda_3$, we have \[ \omega_3 \ge (u_{13}^2  + u_{23}^2  + u_{33}^2) \lambda_3. \]
    Since $U$ is orthogonal, $u_{13}^2  + u_{23}^2  + u_{33}^2 =1$, and hence $\omega_3 \ge \lambda_3$. From condition $(iv)$, we obtain
    \[ \lambda_1 + \lambda_2 \ge \omega_1+ \omega_2,  \]
    which is condition $(iii)$.

    Now, suppose that the lists $\Lambda$ and  $\Omega$ satisfy conditions $(i)$--$(iv)$.
    Let $B$ be the following matrix
    \[ B=\left(\begin{array}{cccccccc}
    \omega_1 & \sqrt{\frac{\beta\, \gamma}{\alpha+\beta}} & \sqrt{\frac{\alpha \, \gamma}{\alpha+\beta}}\\
    \sqrt{\frac{\beta \, \gamma}{\alpha+\beta}}
    & \omega_2 &\sqrt{ \alpha \, \beta}\\
    \sqrt{\frac{\alpha \, \gamma}{\alpha+\beta}} & \sqrt{ \alpha \, \beta} & \omega_3
            \end{array}\right),
    \]
    where $\alpha = \lambda_1 + \lambda_2 - \omega_1 - \omega_2,\ \beta = \lambda_1 + \lambda_2 - \omega_1 - \omega_3$, and $\gamma = (\lambda_1 -\omega_1) (\omega_1 - \lambda_2)$. Then, $B$ is a symmetric nonnegative matrix by conditions $(i)$--$(iii) $. The characteristic polynomial $\Ch(\lambda)$ of $B$ is factored as
\[
 \Ch(\lambda)=\det(B-\lambda I)
 =(\lambda_1-\lambda)(\lambda_2-\lambda)
(\lambda_3-\lambda),
 \]
where we use condition $(iv)$. Therefore, the eigenvalues of $B$ are $\Lambda$, hence this concludes statement \ref{enum:sy1}.

\item[2, 3.] Suppose that $\Omega$ is within ranges (\ref{eq:syw1}) and (\ref{eq:syw2}). Then, a direct computation shows that
\begin{equation*}
    \max \{\frac{1}{2}( \lambda_2- \lambda_3),
    -\lambda_3\}
    \le \lambda_1 + \lambda_2 -\omega_1 -\omega_2 .
 \end{equation*}
 Therefore, $\lambda_1 + \lambda_2 -\omega_1 -\omega_2$ is nonnegative, which is condition $(iii)$. For the other part of the proof, see the proofs of statements \ref{enum:rg3} and \ref{enum:rg4} of Theorem \ref{the:rg}.

\end{enumerate}
\end{proof}

There appear two conditions: $ \omega_1 \omega_2+\omega_1 \omega_3+\omega_2 \omega_3 \ge \lambda_1 \lambda_2+\lambda_1 \lambda_3 +\lambda_2 \lambda_3$ in Theorem \ref{the:rg} and $\lambda_1 + \lambda_2 \ge \omega_1+\omega_2$ in Theorem \ref{the:sy}. It is interesting to compare conditions.

\begin{proposition}\label{pro:relofcond}
Let $\Lambda=\{\lambda_1, \lambda_2, \lambda_3\}$ and $\Omega=\{\omega_1, \omega_2, \omega_3\}$
be lists of real numbers with $\lambda_1 \ge \lambda_2 \ge\lambda_3$ and $\omega_1 \ge\omega_2\ge \omega_3$.
Suppose that  $\lambda_1 \geq \omega_1 \ge \lambda_2$ and $\omega_1+\omega_2 +\omega_3 = \lambda_1 + \lambda_2+ \lambda_3$.
Then, $\lambda_1 + \lambda_2 \ge \omega_1+\omega_2$
implies \[ \omega_1 \omega_2+\omega_1 \omega_3+\omega_2 \omega_3 \ge \lambda_1 \lambda_2+\lambda_1 \lambda_3 +\lambda_2 \lambda_3.\]
\end{proposition}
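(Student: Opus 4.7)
The plan is to reduce the desired inequality to a comparison of sums of squares, and then exploit the fact that the hypotheses are exactly the majorization conditions of $(\lambda_1,\lambda_2,\lambda_3)$ over $(\omega_1,\omega_2,\omega_3)$.

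First, I would use the identity $(x_1+x_2+x_3)^2 = x_1^2+x_2^2+x_3^2 + 2(x_1x_2+x_1x_3+x_2x_3)$. Since by hypothesis $\omega_1+\omega_2+\omega_3 = \lambda_1+\lambda_2+\lambda_3$, squaring and rearranging gives
\[
2\bigl[(\omega_1\omega_2+\omega_1\omega_3+\omega_2\omega_3) - (\lambda_1\lambda_2+\lambda_1\lambda_3+\lambda_2\lambda_3)\bigr]
 = (\lambda_1^2+\lambda_2^2+\lambda_3^2) - (\omega_1^2+\omega_2^2+\omega_3^2).
\]
Therefore it suffices to prove that $\lambda_1^2+\lambda_2^2+\lambda_3^2 \ge \omega_1^2+\omega_2^2+\omega_3^2$.

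Next, I would set $a_i = \lambda_i - \omega_i$ and verify that the three given conditions give $A_1 := a_1 \ge 0$ (from $\lambda_1 \ge \omega_1$), $A_2 := a_1+a_2 \ge 0$ (from $\lambda_1+\lambda_2 \ge \omega_1+\omega_2$), and $A_3 := a_1+a_2+a_3 = 0$ (from the equal-sum hypothesis). This is exactly the statement that $(\lambda_1,\lambda_2,\lambda_3)$ majorizes $(\omega_1,\omega_2,\omega_3)$.

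Then I would finish by an Abel (summation by parts) argument applied to $\sum \lambda_i^2 - \sum \omega_i^2 = \sum a_i(\lambda_i+\omega_i)$, writing
\[
\sum_{i=1}^{3} a_i(\lambda_i+\omega_i) = A_1\bigl[(\lambda_1+\omega_1)-(\lambda_2+\omega_2)\bigr] + A_2\bigl[(\lambda_2+\omega_2)-(\lambda_3+\omega_3)\bigr] + A_3(\lambda_3+\omega_3).
\]
Since both $(\lambda_i)$ and $(\omega_i)$ are in descending order, both bracketed increments are nonnegative, and $A_1,A_2 \ge 0$ while $A_3 = 0$, so the whole expression is nonnegative, as required. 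There is no real obstacle here; the only slightly subtle point is recognizing the majorization structure hidden in the hypotheses, after which the rest is a one-line rearrangement. (Equivalently, one may invoke Schur convexity of $x \mapsto x^2$ on $\R^3$, but the Abel identity makes the argument fully self-contained.)
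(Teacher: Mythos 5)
Your proof is correct, and it takes a genuinely different route from the paper's.

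The paper establishes the algebraic identity
\[
\omega_1 \omega_2+\omega_1 \omega_3+\omega_2 \omega_3 - \lambda_1 \lambda_2-\lambda_1 \lambda_3 -\lambda_2 \lambda_3
= ( \lambda_1 -\omega_1) (\omega_1 -\lambda_2)+
      ( \omega_2 -\lambda_3) ( \omega_3 -\lambda_3)
    +(\omega_1+\lambda_3)(\omega_1+\omega_2 +\omega_3 - \lambda_1 - \lambda_2- \lambda_3),
\]
observes that the last factor vanishes by the equal-sum hypothesis and that $\lambda_1+\lambda_2\ge\omega_1+\omega_2$ forces $\omega_3\ge\lambda_3$, and then reads off nonnegativity of the first two terms from the hypotheses $\lambda_1\ge\omega_1\ge\lambda_2$ and $\omega_2\ge\omega_3\ge\lambda_3$. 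You instead square the equal-sum identity to convert the claim into $\lambda_1^2+\lambda_2^2+\lambda_3^2\ge\omega_1^2+\omega_2^2+\omega_3^2$, recognize the hypotheses as exactly the statement that $(\lambda_i)$ majorizes $(\omega_i)$, and finish with Abel summation applied to $\sum(\lambda_i-\omega_i)(\lambda_i+\omega_i)$, using that both $(\lambda_i)$ and $(\omega_i)$ are weakly decreasing so that $(\lambda_i+\omega_i)$ is too. Both arguments are short and elementary, but they trade off differently: the paper's decomposition is purely computational and specific to $n=3$, whereas your majorization argument is conceptual and generalizes verbatim to any $n$. It is also worth noting that your proof does not actually use the hypothesis $\omega_1\ge\lambda_2$ (only $\lambda_1\ge\omega_1$ enters, giving $A_1\ge 0$), so you establish a slightly stronger statement; the paper's decomposition, by contrast, genuinely relies on $\omega_1\ge\lambda_2$ to make the term $(\lambda_1-\omega_1)(\omega_1-\lambda_2)$ nonnegative.
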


\begin{proof}
We may write that
\begin{multline*}
    \omega_1 \omega_2+\omega_1 \omega_3+\omega_2 \omega_3 - \lambda_1 \lambda_2-\lambda_1 \lambda_3 -\lambda_2 \lambda_3\\
    = ( \lambda_1 -\omega_1) (\omega_1 -\lambda_2)+
      ( \omega_2 -\lambda_3) ( \omega_3 -\lambda_3) \\
    +(\omega_1+\lambda_3)(\omega_1+\omega_2 +\omega_3 - \lambda_1 - \lambda_2- \lambda_3) .
 \end{multline*}
 Since $\lambda_1 + \lambda_2 \ge \omega_1+\omega_2$ implies $\omega_3 \ge \lambda_3$, each term of the right-hand side of the equality is nonnegative, hence we conclude the proposition.
\end{proof}

\begin{remark}
  The converse of Proposition \ref{pro:relofcond} is false. Let
  $\Lambda= \{ 1, \frac{1}{2},\frac{1}{4} \}$ and
  $\Omega= \{ \frac{4}{5}, \frac{3}{4},\frac{1}{5}\}$.
  Then, $\omega_1 \omega_2+\omega_1 \omega_3+\omega_2 \omega_3 - \lambda_1 \lambda_2-\lambda_1 \lambda_3 -\lambda_2 \lambda_3 = \frac{7}{100}$ is positive, but $\lambda_1 + \lambda_2 - \omega_1-\omega_2 = -\frac{1}{20}$ is negative.
\end{remark}

%
%
%

\section{Stochastic $3 \times 3$ nonnegative matrices}\label{sec:Stochastic}

We discuss a stochastic $3 \times 3$ nonnegative matrix case with prescribed real or complex eigenvalues and diagonal entries. Consider first the case of real eigenvalues.  The following theorem was proved by
Perfect \cite{Perfect55} in 1955, except for statement \ref{enum:rs3}, but we include here the theorem and proof to compare the other results and for completeness.

Throughout the paper, we consider a positive scalar multiple of a stochastic matrix or a doubly stochastic matrix, not just a usual stochastic matrix.  Therefore the sum of entries in every row is the largest eigenvalue, say $\lambda_1$, and for the doubly stochastic matrix, the sum of entries in every column is $\lambda_1$ too. In this way, we can trace the behavior of $\lambda_1$, otherwise it is concealed when $\lambda_1=1$. From now on, a \emph{stochastic} matrix or a \emph{doubly stochastic} matrix means a positive scalar multiple of a stochastic matrix or a doubly stochastic matrix, respectively.

\begin{theorem}[real, stochastic case]\label{the:rs}
  Let $\Lambda=\{\lambda_1, \lambda_2, \lambda_3\}$ and $\Omega=\{\omega_1, \omega_2, \omega_3\}$
be lists of real numbers with $\lambda_1 \ge \lambda_2 \ge\lambda_3$ and $\omega_1 \ge\omega_2\ge \omega_3$.

\begin{enumerate}
\item\label{enum:rs1}\cite{Perfect55} There is a stochastic nonnegative matrix with eigenvalues $\Lambda$ and diagonal entries $\Omega$ if and only if the lists $\Lambda$ and $\Omega$ satisfy
 \begin{description}
    \item [] (i) $\omega_i \ge 0$ for $i= $ 1, 2, and 3,
    \item [] (ii) $\lambda_1 \geq \omega_1 \ge \lambda_2$,
    \item [] (iii) $\omega_1+~\omega_2 +\omega_3 = \lambda_1 + \lambda_2+ \lambda_3$,
    \item[] (iv) $\omega_1 \omega_2+\omega_1 \omega_3+\omega_2 \omega_3 \ge \lambda_1 \lambda_2+\lambda_1 \lambda_3 +\lambda_2 \lambda_3.$
 \end{description}

\item\label{enum:rs3} If $\Lambda$ and $\Omega$ satisfy the conditions of statement \ref{enum:rs1},
then the exact range for $\omega_1$ is
     \begin{equation}\label{eq:rsw1}
        \max\{\frac{1}{3}(\lambda_1 +\lambda_2+\lambda_3), \lambda_2\} \le \omega_1 \le \min\{\lambda_1 +\lambda_2+\lambda_3, \lambda_1\}
     \end{equation}
 For the values for $\omega_2$ and $\omega_3$, we may take
        \begin{equation}\label{eq:rsw2}
        \omega_2=\omega_3=\frac{1}{2}(\lambda_1 +\lambda_2+\lambda_3 -\omega_1).
        \end{equation}

\item\label{enum:rs4}\cite{Perfect55} The list $\Lambda$ is realizable as the eigenvalues of a stochastic nonnegative matrix if and only if the following hold:
    \begin{description}
        \item[] (i) $\lambda_1 +\lambda_3 \ge 0$,
        \item[] (ii) $\lambda_1 + \lambda_2 +\lambda_3 \ge 0. $
    \end{description}
\end{enumerate}
\end{theorem}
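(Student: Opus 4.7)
The plan is to exploit the fact that conditions $(i)$--$(iv)$ of statement \ref{enum:rs1} are word-for-word identical to those of statement \ref{enum:rg1} in Theorem \ref{the:rg}. Necessity in statement \ref{enum:rs1} is then immediate: every stochastic nonnegative matrix is a nonnegative matrix, so Theorem \ref{the:rg} statement \ref{enum:rg1} gives the four conditions. Moreover, statements \ref{enum:rs3} and \ref{enum:rs4} involve exactly the same conditions on $\Lambda$ and $\Omega$ as statements \ref{enum:rg3} and \ref{enum:rg4}, so once the sufficiency in statement \ref{enum:rs1} is established, the proofs of statements \ref{enum:rg3} and \ref{enum:rg4} transport verbatim. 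All the real work therefore lies in producing a \emph{stochastic} realizer from $(i)$--$(iv)$, rather than merely a nonnegative one.

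For the construction I would impose that every row sum equal $\lambda_1$, which automatically makes $\lambda_1$ an eigenvalue with Perron eigenvector $(1,1,1)^{\mathrm{tr}}$. The natural one-parameter ansatz is
\[
B \;=\; \begin{pmatrix} \omega_1 & \lambda_1-\omega_1 & 0 \\ 0 & \omega_2 & \lambda_1-\omega_2 \\ \lambda_1-\omega_3-r & r & \omega_3 \end{pmatrix},
\]
whose rows sum to $\lambda_1$ by construction. A direct expansion of $\det(\lambda I - B)$ shows that the coefficient of $\lambda^2$ agrees with that of $(\lambda-\lambda_1)(\lambda-\lambda_2)(\lambda-\lambda_3)$ exactly by condition $(iii)$, while matching the coefficient of $\lambda$ forces
\[
r \;=\; \frac{\omega_1\omega_2+\omega_1\omega_3+\omega_2\omega_3 - \lambda_1\lambda_2-\lambda_1\lambda_3-\lambda_2\lambda_3}{\lambda_1-\omega_2}.
\]
Once these two coefficients agree and $\lambda_1$ is known to be a root, the constant term matches automatically, so $B$ has eigenvalues exactly $\Lambda$. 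All entries of $B$ other than $r$ and $\lambda_1-\omega_3-r$ are manifestly nonnegative from $(i)$ and $(ii)$; the nonnegativity of $r$ is precisely $(iv)$ (using $\lambda_1 > \omega_2$), and a short algebraic manipulation shows that $r \le \lambda_1-\omega_3$ is equivalent to $(\omega_1-\lambda_2)(\omega_1-\lambda_3) \ge 0$, which follows from $(ii)$ together with $\lambda_2 \ge \lambda_3$.

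The main obstacle is the degenerate case $\omega_2 = \lambda_1$, where the closed-form expression for $r$ is meaningless. Here $(ii)$ forces $\omega_1 = \omega_2 = \lambda_1$, $(iii)$ pins down $\omega_3 = \lambda_2+\lambda_3-\lambda_1$, and $(iv)$ collapses to $\omega_2\omega_3 \ge \lambda_2\lambda_3$; the problem then reduces to fitting $\{\lambda_2,\lambda_3\}$ as the spectrum of a $2\times 2$ block with diagonal $\{\omega_2,\omega_3\}$, which one handles by a separate ad hoc construction (or by a continuity argument from the generic case). This edge-case verification is the one place where conditions $(i)$--$(iv)$ must be unpacked by hand; everything else reduces mechanically to Theorem \ref{the:rg}.
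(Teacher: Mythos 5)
Your approach is essentially the paper's: the paper likewise builds a one-parameter stochastic matrix with every row sum equal to $\lambda_1$, solves for the free entry by matching the coefficient of $\lambda$ in the characteristic polynomial (the $\lambda^2$ coefficient matching by $(iii)$ and the constant term automatically, since $\lambda_1$ is a known root), and reads nonnegativity of the two non-obvious entries directly off $(iv)$ and off the factorization $(\omega_1-\lambda_2)(\omega_1-\lambda_3)$ coming from $(ii)$. The one substantive difference is the placement of the free parameter: the paper puts $p$ in position $(2,3)$ with denominator $\lambda_1-\omega_3$; since $\lambda_1 \ge \omega_1 \ge \omega_2 \ge \omega_3$, that denominator vanishes only when $\omega_1=\omega_2=\omega_3=\lambda_1$, and then $(iii)$ forces $\lambda_2=\lambda_3=\lambda_1$ as well, so the degenerate case is just $\lambda_1 I$ — much cleaner than the edge case your choice of denominator $\lambda_1-\omega_2$ produces, where $\omega_3$ remains free and the genuine subcase $\lambda_1=\lambda_2>\lambda_3$ must still be handled by a separate verification.
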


\label{enum:rs2} For $\Lambda$ and $\Omega$ satisfying the conditions of statement \ref{enum:rs1}, we find the following stochastic nonnegative matrix

  \[
  \left(\begin{array}{cccccccc}
  \omega_1 & 0 & \lambda_1-\omega_1 \\
  \lambda_1-\omega_2-p & \omega_2 & p \\
  0& \lambda_1-\omega_3  &  \omega_3
  \end{array}\right),
  \]
where
  \begin{equation}\label{eq:p}
    p = \begin{cases}
        \frac{1}{\lambda_1-\omega_3} (\omega_1 \omega_2+\omega_1 \omega_3+\omega_2 \omega_3 - \lambda_1 \lambda_2-\lambda_1 \lambda_3-\lambda_2 \lambda_3), & \lambda_1 \ne \omega_3; \\
        0, &  \lambda_1 = \omega_3,
        \end{cases}
  \end{equation}
with eigenvalues $\Lambda$ and diagonal entries $\Omega$.
For a usual stochastic matrix, we may take $\lambda_1=1$.

\begin{proof}
  Suppose that the list $\Lambda$ is realizable as the eigenvalues of a stochastic nonnegative matrix with diagonal entries $\Omega$. Since such a stochastic nonnegative matrix is a general nonnegative matrix, by Theorem \ref{the:rg}, the lists $\Omega$ and $\Lambda$ satisfy conditions $(i)$--$(iv)$.

  Now, suppose that the lists $\Omega$ and $\Lambda$ satisfy conditions $(i)$--$(iv)$. Let $C$ be the following stochastic matrix:
\[
  C = \left(\begin{array}{cccccccc}
  \omega_1 & 0 & \lambda_1-\omega_1 \\
  \lambda_1-\omega_2-p & \omega_2 & p \\
  0& \lambda_1-\omega_3  &  \omega_3
  \end{array}\right),
\]
where $p$ is defined in (\ref{eq:p}). Each entry of the matrix is nonnegative. We now check the nonnegativity of $\lambda_1-\omega_2-p$.
Consider the numerator of $\lambda_1-\omega_2-p$:
\begin{equation*}
 (\lambda_1-\omega_2)(\lambda_1-\omega_3)
 -(\omega_1 \omega_2+\omega_1 \omega_3+\omega_2 \omega_3 - \lambda_1 \lambda_2 -\lambda_1 \lambda_3 -\lambda_2 \lambda_3).
\end{equation*}
Using condition $(iii)$ and substituting $\lambda_1$ with $\omega_1 + \omega_2 + \omega_3 - \lambda_2 -\lambda_3$, the expression becomes
\[ (\omega_1-\lambda_2)(\omega_1-\lambda_3).
\]
Hence, we obtain
\begin{equation}\label{E:lop}
 \lambda_1-\omega_2-p = (\omega_1-\lambda_2)(\omega_1-\lambda_3)
 (\lambda_1-\omega_3)^{-1},
\end{equation}
which is nonnegative. Therefore, $C$ is nonnegative. A direct computation shows that the eigenvalues of $C$ are $\Lambda$, hence this concludes statement \ref{enum:rs1}.
For the proof of statements 2 and 3, refer to the proof of Theorem \ref{the:rg}.

\end{proof}

%
%
%

Consider the case of complex eigenvalues. The following theorem was proved by
by Soto, Salas, and Manzaneda \cite{Soto2010} in 2010, except for statement \ref{enum:cs3}, but we include here the theorem and proof to compare the other results and for completeness.
\begin{theorem}[complex, stochastic case]\label{the:cs}
  Let $\Lambda=\{\lambda_1, \lambda_2, \lambda_3\}$ be a list of complex numbers with $ \lambda_1 \ge |\lambda_2| \ge |\lambda_3|$ and  let $\Omega=\{\omega_1, \omega_2, \omega_3\}$ be a list of real numbers with $\omega_1 \ge\omega_2\ge \omega_3$.

\begin{enumerate}
\item\label{enum:cs1}\cite{Soto2010} There is a stochastic nonnegative matrix with eigenvalues $\Lambda$ and diagonal entries
$\Omega$ if and only if the lists $\Lambda$ and $\Omega$ satisfy
 \begin{description}
    \item [] (i) $\omega_i \ge 0$ for $i= $ 1, 2, and 3,
    \item [] (ii) $\lambda_1 \geq \omega_1$,
    \item [] (iii) $\omega_1+~\omega_2 +\omega_3 = \lambda_1 + \lambda_2+ \lambda_3$,
    \item[] (iv) $\omega_1 \omega_2+\omega_1 \omega_3+\omega_2 \omega_3 \ge \lambda_1 \lambda_2+\lambda_1 \lambda_3 +\lambda_2 \lambda_3.$
 \end{description}

\item\label{enum:cs3} If $\Lambda$ and $\Omega$ satisfy the conditions of statement \ref{enum:cs1}, the exact range for $\omega_1$ is
 \begin{equation}\label{eq:csw1}
        \frac{1}{3}(\lambda_1 +\lambda_2+\lambda_3) \le \omega_1 \le \min\{\lambda_1 +\lambda_2+\lambda_3, U_1 \},
 \end{equation}
 where $U_1$ is as defined in (\ref{eq:U1}).
   For the values for $\omega_2$ and $\omega_3$, we may take
         \begin{equation}\label{eq:csw2}
        \omega_2=\omega_3=\frac{1}{2}(\lambda_1 +\lambda_2+\lambda_3 -\omega_1).
        \end{equation}

\item\label{enum:cs4}\cite{Soto2010} The list $\Lambda$ is realizable as the eigenvalues of a stochastic nonnegative matrix if and only if the following hold:
    \begin{description}
        \item[] (i) $\lambda_1 \ge 0$ and $\lambda_2 =  \overline{\lambda_3}$,
        \item[] (ii) $\lambda_1 +\lambda_2+\lambda_3 \ge 0$,
        \item[] (iii) $\lambda_1^2 +\lambda_2^2 +\lambda_3^2 \ge  \lambda_1 \lambda_2 +\lambda_1 \lambda_3 + \lambda_2 \lambda_3$.
    \end{description}

\end{enumerate}
\end{theorem}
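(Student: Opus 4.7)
The plan is to leverage the parallel with Theorem \ref{the:cg} and to reuse the construction from Theorem \ref{the:rs}. The key observation is that conditions (i)--(iv) of statement \ref{enum:cs1} coincide \emph{exactly} with conditions (i)--(iv) of statement \ref{enum:cg1}. Once statement \ref{enum:cs1} is in place, statements \ref{enum:cs3} and \ref{enum:cs4} follow word-for-word from statements \ref{enum:cg3} and \ref{enum:cg4}, since nothing in those two proofs relied on the matrix being \emph{general} rather than stochastic---only on the list-level conditions (i)--(iv) and the definitions of the ranges. So no new argument is needed for parts 2 and 3 beyond citing the earlier theorem.

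For statement \ref{enum:cs1}, necessity follows directly from statement \ref{enum:cg1}, because every stochastic nonnegative matrix is in particular a nonnegative matrix. For sufficiency, I would exhibit the same explicit matrix used in Theorem \ref{the:rs}:
\[
C = \left(\begin{array}{ccc}
\omega_1 & 0 & \lambda_1-\omega_1 \\
\lambda_1-\omega_2-p & \omega_2 & p \\
0 & \lambda_1-\omega_3 & \omega_3
\end{array}\right),
\]
with $p$ defined by (\ref{eq:p}). Every row sum of $C$ equals $\lambda_1$, so $C$ is stochastic in the scaled sense used throughout the paper, and its characteristic polynomial factors as $(\lambda_1-\lambda)(\lambda_2-\lambda)(\lambda_3-\lambda)$ by the same formal computation as in Theorem \ref{the:rs}; that computation used only condition (iii), condition (iv), and the definition of $p$, all of which are algebraic identities insensitive to whether $\lambda_2,\lambda_3$ are real.

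The main obstacle is nonnegativity of the entry $\lambda_1-\omega_2-p$. In the real proof this was reduced via identity (\ref{E:lop}) to showing $(\omega_1-\lambda_2)(\omega_1-\lambda_3)\ge 0$, which used $\omega_1 \ge \lambda_2$---a condition unavailable in the complex setting. To repair this, I would first observe, as in the proof of statement \ref{enum:cg4}, that conditions (i)--(iv) together with $\omega_i\in\R$ force $\lambda_2=\overline{\lambda_3}$: the two elementary symmetric functions of $\lambda_2,\lambda_3$ are real by (iii) and (iv), and combined with closure of $\Lambda$ under complex conjugation this pins down $\lambda_2=\overline{\lambda_3}$. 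Then
\[
(\omega_1-\lambda_2)(\omega_1-\lambda_3)=|\omega_1-\lambda_2|^2\ge 0,
\]
giving the required nonnegativity via (\ref{E:lop}). Nonnegativity of $p$ itself follows from (iv), whose right-hand side is real by the same observation, together with $\lambda_1 > \omega_3$; the remaining entries are nonnegative by (i), (ii), and the chain $\omega_3\le\omega_1\le\lambda_1$. The degenerate case $\lambda_1=\omega_3$ forces $\omega_1=\omega_2=\omega_3=\lambda_1$ and then $\lambda_2=\lambda_3=\lambda_1$ via (iii) and the Perron--Frobenius bound $|\lambda_2|\le\lambda_1$, so $C=\lambda_1 I$ trivially realizes $\Lambda$.
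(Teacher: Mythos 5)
Your proposal follows essentially the same route as the paper: necessity is inherited from the general complex case, sufficiency reuses the explicit stochastic matrix $C$ from Theorem \ref{the:rs}, and the only genuine new step is re-examining the identity (\ref{E:lop}) to see that $(\omega_1-\lambda_2)(\omega_1-\lambda_3)=|\omega_1-\lambda_2|^2\ge 0$ once $\lambda_2=\overline{\lambda_3}$ is established, which is exactly the paper's one-line modification. Your treatment is somewhat more explicit than the paper's (you spell out why conditions (i)--(iv) force $\lambda_2=\overline{\lambda_3}$ and handle the degenerate case $\lambda_1=\omega_3$), but these are the same details the paper delegates by reference to Theorems \ref{the:cg} and \ref{the:rs}.
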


\label{enum:cs2} For $\Lambda$ and $\Omega$ satisfying the conditions of statement \ref{enum:cs1}, we find the following stochastic nonnegative matrix
  \[
  \left(\begin{array}{cccccccc}
  \omega_1 & 0 & \lambda_1-\omega_1 \\
  \lambda_1-\omega_2-p & \omega_2 & p \\
  0& \lambda_1-\omega_3  &  \omega_3
  \end{array}\right),
  \]
  where $p$ is in (\ref{eq:p}), with eigenvalues $\Lambda$ and diagonal entries $\Omega$.
  For a usual stochastic matrix, we may take $\lambda_1=1$.

\begin{proof}
The proof of this theorem is similar to that of Theorem \ref{the:cg} or \ref{the:rs} except for the argument about the equation (\ref{E:lop}). We consider that
\begin{align*}
   \lambda_1-\omega_2-p &= (\omega_1-\lambda_2)(\omega_1-\lambda_3)(\lambda_1-\omega_3)^{-1} \\
   &= \left( \Bigl( \omega_1-\Re(\lambda_2) \Bigr)^2 +(\Im(\lambda_2))^2 \right) (\lambda_1-\omega_3)^{-1},
\end{align*}
which is nonnegative.
This completes the proof of this Theorem.
\end{proof}

From statement \ref{enum:cs1} of Theorem \ref{the:cs}, without loss of generality we may write $\lambda_1=a, \lambda_2 =b+ci$, and $\lambda_3=b-ci$  where $a, b$, and $c$ are real numbers. We restate statements \ref{enum:cs3} and \ref{enum:cs4} of Theorem \ref{the:cs} in terms of $a, b$ and $c$. Note that the list $\Lambda$ is now $\{ a, b+c i, b-c i \}$.
\begin{corollary}
\begin{enumerate}
\item The list $\Lambda$ is realizable as the eigenvalues of a stochastic nonnegative matrix if and only if the following hold:
    \begin{description}
        \item[] (i) $a \ge 0$,
        \item[] (ii) $-\frac{a}{2}  \le b \le a$,
        \item[] (iii) $(a-b)^2 \ge 3c^2 $.
    \end{description}

\item  The exact range for $\omega_1$ is
    \[
    \frac{1}{3}(a+2b) \le \omega_1 \le \min\{a+2b, \frac{1}{3}(a+2b)+\frac{2}{3}\sqrt{(a-b)^2-3c^2}\}.
    \]
\end{enumerate}
\end{corollary}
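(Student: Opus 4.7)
The plan is to prove both statements by straight substitution of the parameterization $\lambda_1 = a$, $\lambda_2 = b+ci$, $\lambda_3 = b-ci$ into Theorem~\ref{the:cs}. Three symmetric-function identities, which I would record first, drive everything: $\lambda_1+\lambda_2+\lambda_3 = a+2b$, $\lambda_1\lambda_2+\lambda_1\lambda_3+\lambda_2\lambda_3 = 2ab+b^2+c^2$, and $\lambda_1^2+\lambda_2^2+\lambda_3^2 - (\lambda_1\lambda_2+\lambda_1\lambda_3+\lambda_2\lambda_3) = (a-b)^2 - 3c^2$. Every condition of the corollary will then be an immediate rewrite of a condition in Theorem~\ref{the:cs}.

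For statement 1, conditions (i) and (ii) of Theorem~\ref{the:cs}.\ref{enum:cs4} translate to $a \ge 0$ and $b \ge -a/2$ (the conjugation $\lambda_2 = \overline{\lambda_3}$ is built into the parameterization), while condition (iii) becomes $(a-b)^2 \ge 3c^2$ by the third identity. The upper bound $b \le a$ in the corollary's (ii) encodes the Perron hypothesis $\lambda_1 \ge |\lambda_2|$ assumed throughout Theorem~\ref{the:cs}, since $a = \lambda_1 \ge |\lambda_2| \ge |b| \ge b$. For the converse direction I must re-derive $a^2 \ge b^2+c^2$ from the corollary's conditions alone. The key step is the estimate
\[
a^2 - b^2 - c^2 \;\ge\; a^2 - b^2 - \tfrac{1}{3}(a-b)^2 \;=\; \tfrac{2}{3}(a-b)(a+2b),
\]
which uses (iii) together with the algebraic identity $a^2 - b^2 - \tfrac{1}{3}(a-b)^2 = \tfrac{2}{3}(a-b)(a+2b)$; the right-hand side is nonnegative by $b \le a$ and $b \ge -a/2$ from (ii).

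Statement 2 is then pure substitution into (\ref{eq:csw1}) and (\ref{eq:U1}): the lower endpoint and the first argument of the minimum become $\frac{1}{3}(a+2b)$ and $a+2b$ by the first identity, and $U_1$ becomes $\frac{1}{3}(a+2b) + \frac{2}{3}\sqrt{(a-b)^2 - 3c^2}$ via the third identity, matching the stated range exactly. The only non-routine point in the whole proof is the factorization used above, which is precisely what makes both endpoints in the corollary's (ii) reappear as the sign conditions guaranteeing $\lambda_1 \ge |\lambda_2|$; everything else is bookkeeping.
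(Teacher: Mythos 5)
Your proof is correct and follows the same substitution route the paper implicitly intends: the three symmetric-function identities reduce everything to Theorem~\ref{the:cs}, and your check that conditions (i)--(iii) alone force $a^2 \ge b^2+c^2$ (via $a^2-b^2-\tfrac{1}{3}(a-b)^2 = \tfrac{2}{3}(a-b)(a+2b)\ge 0$) is exactly what justifies including the upper bound $b\le a$ in condition (ii), a step the paper glosses over by presenting the corollary as a pure restatement.
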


%
%
%

\section{Symmetric stochastic $3 \times 3$ nonnegative matrices }\label{sec:Sym-stocha}

We discuss a symmetric stochastic $3 \times 3$ nonnegative matrix case with prescribed eigenvalues and diagonal entries.
Statement \ref{enum:sds4} in the following Theorem \ref{the:sds} was proved by Perfect and Minsky \cite[proof of Theorem 14]{PerfectMirsky65} in 1965.

\begin{theorem}[symmetric stochastic case]
  \label{the:sds}
   Let $\Lambda=\{\lambda_1, \lambda_2, \lambda_3\}$ and $\Omega=\{\omega_1, \omega_2, \omega_3\}$
be lists of real numbers with $\lambda_1 \ge \lambda_2 \ge\lambda_3$ and $\omega_1 \ge\omega_2\ge \omega_3$.

 \begin{enumerate}
 \item \label{enum:sds1} There is a symmetric stochastic nonnegative matrix with eigenvalues $\Lambda$ and diagonal entries $\Omega$ if and only if the lists $\Lambda$ and $\Omega$ satisfy
 \begin{description}
     \item [] (i) $\omega_i \ge 0$ for $i= $ 1, 2, and 3,
    \item [] (ii) $\omega_1+\omega_2 +\omega_3 = \lambda_1 + \lambda_2+ \lambda_3$,
    \item [] (iii) $\omega_3 - \frac{\lambda_2}{2}- \frac{\lambda_3}{2} \ge 0$,
   \item [] (iv) $(\omega_3-\frac{\lambda_2}{2}-\frac{\lambda_3}{2})^2
    =(\lambda_1-\omega_1)(\lambda_1-\omega_2) -\frac{1}{3}(\lambda_1-\lambda_2)(\lambda_1-\lambda_3)$.
 \end{description}

\item \label{enum:sds3}
 If $\Lambda$ and $\Omega$ satisfy the conditions of statement \ref{enum:sds1}, then the exact range for $\omega_1$ is
  \begin{equation}\label{eq:sdsw1}
        \max\{ L_1, L_2, L_3 \} \le \omega_1 \le \frac{1}{3}(\lambda_1+2 \lambda_2),
  \end{equation}
where
  \begin{align}\label{eq:Li}
  L_1&=\frac{1}{6} (2 \lambda_1 +3\lambda_2 +\lambda_3), \notag \\
  L_2&=\frac{1}{2}(\lambda_1 +\lambda_2+\lambda_3)+
  \frac{1}{2\sqrt{3}}\sqrt{-(\lambda_1 +2\lambda_2)(\lambda_1 +2 \lambda_3)} ,\\
  L_3&=\frac{1}{4}
  (2\lambda_1 + \lambda_2 +\lambda_3)
  +\frac{1}{4\sqrt{3}}\sqrt{-(2\lambda_1+\lambda_2-3\lambda_3) (2\lambda_1-3\lambda_2 +\lambda_3)}.  \notag
  \end{align}
 For the values for $\omega_2$ and $\omega_3$, we have
    \begin{equation}\label{eq:sdsw2}
        \omega_2=\frac{1}{2}(\lambda_1 +\lambda_2 +\lambda_3 -\omega_1)
        +\frac{1}{2\sqrt{3}} \sqrt{-
        (3\omega_1 -\lambda_1 -2\lambda_2)
        (3\omega_1 -\lambda_1 -2\lambda_3)},
    \end{equation}
  and
     \begin{equation}\label{eq:sdsw3}
        \omega_3=\frac{1}{2}(\lambda_1 +\lambda_2 +\lambda_3 -\omega_1)
        -\frac{1}{2\sqrt{3}} \sqrt{-
        (3\omega_1 -\lambda_1 -2\lambda_2)
        (3\omega_1 -\lambda_1 -2\lambda_3)}.
     \end{equation}

\item \cite{PerfectMirsky65} \label{enum:sds4} The list $\Lambda$ is realizable as the eigenvalues of a symmetric stochastic nonnegative matrix if and only if the following holds:
    \[ 2\lambda_1 + \lambda_2 + 3\lambda_3 \ge 0.
    \]

\end{enumerate}
\end{theorem}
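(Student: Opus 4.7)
The plan is to follow the template used for Theorems~\ref{the:rg} and~\ref{the:cg}: first prove statement~\ref{enum:sds1}, then derive statement~\ref{enum:sds3} by solving (iv) explicitly for $\omega_2,\omega_3$ in terms of $\omega_1$, and finally obtain statement~\ref{enum:sds4} as a corollary by asking when the resulting $\omega_1$-interval is nonempty.

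For statement~\ref{enum:sds1}, the key observation is that a symmetric stochastic matrix with prescribed diagonal $\Omega$ and Perron eigenvalue $\lambda_1$ is \emph{uniquely} determined. Writing
\[
A = \begin{pmatrix} \omega_1 & A_{12} & A_{13} \\ A_{12} & \omega_2 & A_{23} \\ A_{13} & A_{23} & \omega_3 \end{pmatrix},
\]
the three row-sum equations $A\mathbf{1} = \lambda_1 \mathbf{1}$ combined with condition (ii) yield, after a short linear computation, $A_{12} = \omega_3 - m$, $A_{13} = \omega_2 - m$, and $A_{23} = \omega_1 - m$, where $m = \tfrac{1}{2}(\lambda_2+\lambda_3)$. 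Under the ordering $\omega_1 \ge \omega_2 \ge \omega_3$, the smallest off-diagonal is $A_{12}$, and its nonnegativity is precisely condition (iii). Since $\lambda_1$ is automatically an eigenvalue of $A$ (as $A\mathbf{1} = \lambda_1 \mathbf{1}$), and the trace already matches by (ii), the remaining content of the eigenvalue condition reduces to matching the second elementary symmetric function, i.e.\ $A_{12}^2 + A_{13}^2 + A_{23}^2 = \sum_{i<j}\omega_i\omega_j - \sum_{i<j}\lambda_i\lambda_j$; the determinant matches automatically. A direct expansion using the explicit formulas for $A_{ij}$ rewrites this identity in the compact form (iv). Conversely, the matrix $A$ built from (i)--(iv) is nonnegative by (i), (iii), and the ordering, is symmetric stochastic by construction, and has characteristic polynomial $(\lambda_1-\lambda)(\lambda_2-\lambda)(\lambda_3-\lambda)$ by (iv).

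For statement~\ref{enum:sds3}, I would eliminate $\omega_2$ via (ii) and treat (iv) as a quadratic equation in $\omega_3$; its discriminant factors (up to a positive constant) as $-(3\omega_1 - \lambda_1 - 2\lambda_2)(3\omega_1 - \lambda_1 - 2\lambda_3)$, so real roots exist precisely when $\omega_1 \le \tfrac{1}{3}(\lambda_1 + 2\lambda_2)$, giving the upper bound in (\ref{eq:sdsw1}) and the closed-form expressions (\ref{eq:sdsw2})--(\ref{eq:sdsw3}). The three lower bounds arise from the remaining non-automatic constraints: $L_1$ from $\omega_1 \ge \omega_2$, $L_2$ from $\omega_3 \ge 0$, and $L_3$ from condition (iii). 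Each reduces to a quadratic inequality in $\omega_1$ whose relevant root gives the stated $L_i$, with careful attention to the sign of $3\omega_1 - \lambda_1 - \lambda_2 - \lambda_3$ (forced to be $\ge 0$ by the constraints) to eliminate spurious branches when squaring.

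For statement~\ref{enum:sds4}, following the pattern of statement~\ref{enum:rg4} of Theorem~\ref{the:rg}, realizability of $\Lambda$ is equivalent to the interval in (\ref{eq:sdsw1}) being nonempty, i.e.\ $\max\{L_1, L_2, L_3\} \le \tfrac{1}{3}(\lambda_1 + 2\lambda_2)$. The inequality $L_1 \le \tfrac{1}{3}(\lambda_1 + 2\lambda_2)$ simplifies directly to $2\lambda_1 + \lambda_2 + 3\lambda_3 \ge 0$; the other two should reduce, after squaring and factoring, to the same condition, against the standing hypothesis $\lambda_1 \ge |\lambda_2| \ge |\lambda_3|$ and $\lambda_1 \ge 0$. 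The main obstacle will be this last algebraic reduction: verifying that all three bounds $L_i \le \tfrac{1}{3}(\lambda_1 + 2\lambda_2)$ collapse to the single clean inequality $2\lambda_1 + \lambda_2 + 3\lambda_3 \ge 0$ is the delicate part, and it is forced by condition (iv) being an \emph{equality} rather than an inequality, which is what genuinely distinguishes this case from the earlier theorems.
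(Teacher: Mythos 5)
Your proof of statement~\ref{enum:sds1} is correct and matches the paper's argument in substance, though your presentation via uniqueness of the symmetric row-stochastic fill-in is a nice compression: the three row-sum equations do uniquely determine $A_{12}=\omega_3-m$, $A_{13}=\omega_2-m$, $A_{23}=\omega_1-m$ with $m=\tfrac12(\lambda_2+\lambda_3)$, the smallest of these is $A_{12}$ so its nonnegativity is exactly (iii), and since $\lambda_1$ is automatically an eigenvalue and the trace already matches, matching $e_2$ forces $e_3$ as well. The paper parametrizes by $s=A_{12}$ and compares characteristic polynomials; the two arguments are equivalent.

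For statement~\ref{enum:sds3} your identification of the sources of the bounds is right in outline ($L_1$ from $\omega_1\ge\omega_2$, $L_2$ from $\omega_3\ge 0$, $L_3$ from condition (iii), and the upper bound from real roots), but you are glossing over the hard part: proving that \emph{every} $\omega_1$ in the stated interval yields a valid $\Omega$ requires knowing which of $L_1,L_2,L_3$ actually dominates on which part of $(\lambda_2,\lambda_3)$-space. The paper isolates this as Lemma~\ref{lem:region} and the case analysis over subregions $R_1,R_2,R_3$ is the bulk of the proof. You also do not address the converse direction (that $\omega_1$ outside the interval forces a violation), which the paper handles region by region.

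The real gap is in statement~\ref{enum:sds4}. You claim that $L_1\le\tfrac13(\lambda_1+2\lambda_2)$ ``simplifies directly to $2\lambda_1+\lambda_2+3\lambda_3\ge 0$,'' but in fact
\[
L_1\le\tfrac13(\lambda_1+2\lambda_2)\iff \tfrac16(2\lambda_1+3\lambda_2+\lambda_3)\le\tfrac16(2\lambda_1+4\lambda_2)\iff\lambda_3\le\lambda_2,
\]
which always holds. A similar calculation shows $L_2\le\tfrac13(\lambda_1+2\lambda_2)$ and $L_3\le\tfrac13(\lambda_1+2\lambda_2)$ are also automatic whenever those expressions are real (the squared inequalities collapse to perfect squares). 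So the nonemptiness of the interval in~(\ref{eq:sdsw1}) does \emph{not} encode the realizability condition; the genuine obstruction lives in a fourth inequality you have dropped. When you unfold $\omega_3\ge 0$ before squaring you get two conditions, $\omega_1\le\lambda_1+\lambda_2+\lambda_3$ \emph{and} $\omega_1\ge L_2$, and the first one is binding precisely when $\lambda_1+\lambda_2+\lambda_3<\tfrac13(\lambda_1+2\lambda_2)$, i.e.\ when $2\lambda_1+\lambda_2+3\lambda_3<0$. Equivalently, at the right endpoint $\omega_1=\tfrac13(\lambda_1+2\lambda_2)$ you get $\omega_2=\omega_3=\tfrac16(2\lambda_1+\lambda_2+3\lambda_3)$, and that must be $\ge 0$. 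Moreover, the paper proves statement~\ref{enum:sds4} \emph{before} statement~\ref{enum:sds3} and uses it inside the proof of~\ref{enum:sds3} to get $\omega_3\ge 0$, so your proposed order (first~\ref{enum:sds3}, then~\ref{enum:sds4} as a corollary) would force you to re-derive the forward direction of~\ref{enum:sds4} inside~\ref{enum:sds3} anyway. The paper's direct proof of~\ref{enum:sds4} is also considerably cleaner than chasing the $L_i$: set $p=2\lambda_1+\lambda_2+3\lambda_3$ and $q=2\lambda_1+3\lambda_2+\lambda_3$; using (ii) and (iv) one checks $p+q=4(\lambda_1+\lambda_2+\lambda_3)\ge 0$ and $pq=12(\omega_1\omega_2+\omega_1\omega_3+\omega_2\omega_3)\ge 0$, whence $p\ge 0$. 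You should adopt that argument and reverse the order of parts~\ref{enum:sds3} and~\ref{enum:sds4}.
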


\label{enum:sds2} For $\Lambda$ and $\Omega$ satisfying the conditions of statement \ref{enum:sds1}, we find the following symmetric stochastic nonnegative matrix
  \[
  \left(\begin{array}{cccccccc}
  \omega_1 & \omega_3-\frac{\lambda_2}{2}-\frac{\lambda_3}{2} & \omega_2-\frac{\lambda_2}{2}-\frac{\lambda_3}{2} \\
  \omega_3-\frac{\lambda_2}{2}-\frac{\lambda_3}{2} & \omega_2 & \omega_1-\frac{\lambda_2}{2}-\frac{\lambda_3}{2} \\
  \omega_2-\frac{\lambda_2}{2}-\frac{\lambda_3}{2}& \omega_1-\frac{\lambda_2}{2}-\frac{\lambda_3}{2} &  \omega_3
  \end{array}\right),
  \]
with eigenvalues $\Lambda$ and diagonal entries $\Omega$.
For a usual stochastic matrix, we may take $\lambda_1=1$.

\begin{proof}
\begin{enumerate}
\item
 Suppose that the list $\Lambda$ is realizable as the eigenvalues of a symmetric stochastic nonnegative matrix $A$ with diagonal entries $\Omega$. Then, the elements $\omega_1$, $\omega_2$, and $\omega_3$ are nonnegative, and therefore, we have condition $(i)$.
 The characteristic polynomial $\Ch_1(\lambda)$ of $A$, using the eigenvalues $\Lambda$, is
 \[ \Ch_1(\lambda) = \lambda_1 \lambda_2 \lambda_3 -(\lambda_1 \lambda_2+\lambda_1 \lambda_3 +\lambda_2 \lambda_3)\lambda+ (\lambda_1 + \lambda_2 + \lambda_3)\lambda^2-\lambda^3. \]

  We may write the symmetric stochastic nonnegative matrix $A$ without loss of generality as
  \[ A=\left(\begin{array}{cccccccc}
    \omega_1 & s & \lambda_1 - \omega_1 -s \\
    s  & \omega_2 & \lambda_1 - \omega_2 -s  \\
    \lambda_1 - \omega_1 -s  & \lambda_1 - \omega_2 -s  & \omega_3
             \end{array}\right),
  \]
 where each entry is nonnegative and $\omega_3=2s +\omega_1+\omega_2-\lambda_1$. Then, we have another form of the characteristic polynomial $\Ch_2(\lambda)$ of $A$, which is
  \begin{multline*}
   \Ch_2(\lambda) = (-3s^2 +(2s -\omega_1 -\omega_2) \lambda_1 + 3 \omega_1 \omega_2) \lambda_1\\
   - (-3s^2 +(4 s +\omega_1 + \omega_2 -2 \lambda_1)\lambda_1 + 3 \omega_1 \omega_2 )\lambda \\
   + (\omega_1 + \omega_2  + \omega_3 )\lambda^2
   - \lambda^3.
  \end{multline*}
Then, the two characteristic polynomials $\Ch_1(\lambda)$ and $\Ch_2(\lambda)$ must be identical. Comparing the coefficients of $\lambda^2$, we have
\[\lambda_1 + \lambda_2 +\lambda_3  = \omega_1+ \omega_2+ \omega_3, \]
which is condition $(ii)$.
From $\omega_3=2s +\omega_1+\omega_2-\lambda_1$, we have $s = \omega_3-\frac{\lambda_2}{2}-\frac{\lambda_3}{2}$, which is nonnegative, hence condition $(iii)$ holds.
Comparing the coefficients of $\lambda$, we have
\[\lambda_1 \lambda_2+\lambda_1 \lambda_3+\lambda_2 \lambda_3 = -3s^2 +(4 s +\omega_1 + \omega_2 -2 \lambda_1)\lambda_1 + 3 \omega_1 \omega_2.  \]
Then, substituting $s^2$ by $(\omega_3-\frac{\lambda_2}{2}-\frac{\lambda_3}{2})^2$ and $s$ by $\lambda_1+ \frac{\lambda_2}{2}+\frac{\lambda_3}{2}-\omega_1-\omega_2$, we obtain
\[ (\omega_3-\frac{\lambda_2}{2}-\frac{\lambda_3}{2})^2
    =(\lambda_1-\omega_1)(\lambda_1-\omega_2) -\frac{1}{3}(\lambda_1-\lambda_2)(\lambda_1-\lambda_3),
\]
which is condition $(iv)$.
Therefore, all conditions $(i)$--$(iv)$ are satisfied.

Now, suppose that lists $\Lambda$ and  $\Omega$ satisfy conditions  $(i)$--$(iv)$.
Let $B$ be the following  matrix
\[ B=
  \left(\begin{array}{cccccccc}
  \omega_1 & \omega_3-\frac{\lambda_2}{2}-\frac{\lambda_3}{2} & \omega_2-\frac{\lambda_2}{2}-\frac{\lambda_3}{2} \\
  \omega_3-\frac{\lambda_2}{2}-\frac{\lambda_3}{2} & \omega_2 & \omega_1-\frac{\lambda_2}{2}-\frac{\lambda_3}{2} \\
  \omega_2-\frac{\lambda_2}{2}-\frac{\lambda_3}{2}& \omega_1-\frac{\lambda_2}{2}-\frac{\lambda_3}{2} &  \omega_3
  \end{array}\right).
\]
Because of condition $(iii)$, which is $\omega_3 - \frac{\lambda_2}{2}- \frac{\lambda_3}{2} \ge 0$, and $\omega_1 \ge\omega_2\ge \omega_3\ge 0$,  we have that $\omega_1 - \frac{\lambda_2}{2}- \frac{\lambda_3}{2} \ge 0$ and $\omega_2 - \frac{\lambda_2}{2}- \frac{\lambda_3}{2} \ge 0$. Hence, the matrix $B$ is nonnegative. All the sums of each row entries are equal to $\lambda_1$, and hence  the matrix $B$ is symmetric stochastic.
The characteristic polynomial $\Ch_3(\lambda)$ of $B$ is factored as
\[
 \Ch_3(\lambda)=\det(B-\lambda I)
 =(\lambda_1-\lambda)(\lambda_2-\lambda)
(\lambda_3-\lambda),
 \]
where we use conditions $(iii)$ and $(iv),$ and therefore, the eigenvalues of $B$ are $\Lambda$. Therefore, we have a symmetric stochastic nonnegative matrix with eigenvalues $\Lambda$ and diagonal entries $\Omega$, hence this concludes statement \ref{enum:sds1}.

\item[\ref{enum:sds4}.] Here, we prove statement \ref{enum:sds4}. From statement \ref{enum:sds1},  statement \ref{enum:sds4} is equivalent to the following: for a given list $\Lambda$, there is a list $\Omega$ with conditions $(i)$--$(iv)$ of statement \ref{enum:sds1} if and only if $\Lambda$ satisfies condition $2\lambda_1 + \lambda_2 + 3\lambda_3 \ge 0.$ Hence, we give a proof for this statement.

    Suppose that there is a list $\Omega$ with conditions $(i)$--$(iv)$ of statement \ref{enum:sds1}.
    Let $ p= 2\lambda_1 + \lambda_2 + 3\lambda_3 $ and $ q= 2\lambda_1 + 3\lambda_2 + \lambda_3$. Then their sum is $p+q = 4(\lambda_1 + \lambda_2 + \lambda_3)$, and by conditions $(i)$ and $(ii)$ of statement \ref{enum:sds1}, it is nonnegative. Their product is
    \begin{align*}
    p\,q= & (2\lambda_1 + \lambda_2 + 3\lambda_3) (2\lambda_1 + 3\lambda_2 + \lambda_3) \\
    = & (2\lambda_1 + \lambda_2 + 3\lambda_3) (2\lambda_1 + 3\lambda_2 + \lambda_3) \\
    &\qquad -12\left((\omega_3-\frac{\lambda_2}{2}-\frac{\lambda_3}{2})^2
     -(\lambda_1-\omega_1)(\lambda_1-\omega_2)
      +\frac{1}{3}(\lambda_1-\lambda_2)(\lambda_1-\lambda_3)\right) \\
    &\qquad +12(\lambda_1+\omega_3) (\omega_1+\omega_2 +\omega_3 - \lambda_1 - \lambda_2- \lambda_3) \\
    = & 12(\omega_1 \omega_2 + \omega_1 \omega_3 + \omega_2 \omega_3 ),
    \end{align*}
    where we use conditions $(ii)$ and $(iv)$. Each of $\omega_1, \omega_2$, and $\omega_3$ is nonnegative, hence the product $p\, q$ is nonnegative. Since the sum and product of reals $p$ and $q$ are nonnegative, the values $p$ and $q$ are nonnegative, hence  we have $ 2\lambda_1 + \lambda_2 + 3\lambda_3 \ge 0 $.

    Conversely, we suppose that $2\lambda_1 + \lambda_2 + 3\lambda_3 \ge 0$. We let $$\omega_1=\frac{1}{3}( \lambda_1+ 2 \lambda_2) \text{ and }\,
    \omega_2= \omega_3=\frac{1}{6}(2 \lambda_1+ \lambda_2 +3 \lambda_3) .$$
    Then, a direct computation shows that these $\omega_1, \omega_2$, and $\omega_3$ satisfy conditions $(i)$--$(iv)$. Therefore, statement \ref{enum:sds4} holds.

\item[\ref{enum:sds3}.] We need the following lemma to prove statement \ref{enum:sds3}.

\begin{lemma}\label{lem:region}
 Let $R$ be the region on the plane with $\lambda_2$- and $\lambda_3$- axes, bounded by
 \[ -\frac{\lambda_1}{2} \le \lambda_2 \le \lambda_1 \ \text{and} \ -\frac{1}{3}(2\lambda_1+ \lambda_2) \le \lambda_3 \le \lambda_2. \]
 Let $R_1$ be the subregion of $R$ bounded by
 \[ -\frac{\lambda_1}{2} \le \lambda_2 \le \lambda_1 \ \text{and} \ \max\{-\frac{\lambda_1}{2}, -2 \lambda_1+3 \lambda_2 \} \le \lambda_3 \le \lambda_2, \]
 $R_2$ be the subregion of $R$ bounded by
 \[-\frac{\lambda_1}{2} \le \lambda_2 \le \lambda_1 \ \text{and} \ -\frac{1}{3}(2\lambda_1+ \lambda_2) \le \lambda_3 \le \min\{ -\frac{\lambda_1}{2} , -\lambda_2\}, \]
 and $R_3$ be the subregion of $R$ bounded by
 \[\frac{\lambda_1}{2} \le \lambda_2 \le \lambda_1 \ \text{and} \ - \lambda_2 \le \lambda_3 \le -2 \lambda_1+3 \lambda_2. \]
 Then, $L_1=\max\{ L_1, L_2, L_3 \}$ on the subregion $R_1$, $L_2=\max\{ L_1, L_2, L_3 \}$ on the subregion $R_2$, and $L_3=\max\{ L_1, L_2, L_3 \}$ on the subregion $R_3$.
\end{lemma}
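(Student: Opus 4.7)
The lemma is a direct comparison of three explicit algebraic functions of $(\lambda_2,\lambda_3)$, with $\lambda_1>0$ a fixed parameter. My plan is to identify the three level curves $\{L_i=L_j\}$ inside $R$, match them with the three lines that separate the subregions in the statement, and then fix the sign of each difference by an ``isolate-and-square'' reduction or a single test point on each side.

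Direct simplification produces the three pairwise differences
\begin{align*}
L_1-L_2 &= -\tfrac{1}{6}(\lambda_1+2\lambda_3) - \tfrac{1}{2\sqrt{3}}\sqrt{A_2},\\
L_1-L_3 &= -\tfrac{1}{12}(2\lambda_1-3\lambda_2+\lambda_3) - \tfrac{1}{4\sqrt{3}}\sqrt{A_3},\\
L_2-L_3 &= \tfrac{1}{4}(\lambda_2+\lambda_3) + \tfrac{1}{4\sqrt{3}}\bigl(2\sqrt{A_2}-\sqrt{A_3}\bigr),
\end{align*}
where $A_2=-(\lambda_1+2\lambda_2)(\lambda_1+2\lambda_3)$ and $A_3=-(2\lambda_1+\lambda_2-3\lambda_3)(2\lambda_1-3\lambda_2+\lambda_3)$. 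In the first two differences the linear factor in front of the radical reappears as a factor inside the radical, so both summands vanish simultaneously on $\lambda_3=-\lambda_1/2$ and on $\lambda_3=-2\lambda_1+3\lambda_2$, respectively. For the third difference, the key algebraic input is the polynomial identity
\[
4A_2 - A_3 \;=\; -3(\lambda_2+\lambda_3)\bigl(4\lambda_1+\lambda_2+\lambda_3\bigr),
\]
together with the fact that $4\lambda_1+\lambda_2+\lambda_3>0$ throughout $R$ (easily read off from $\lambda_2\ge-\lambda_1/2$ and $\lambda_3\ge-\tfrac{1}{3}(2\lambda_1+\lambda_2)$). Hence on $\lambda_3=-\lambda_2$ one has $A_3=4A_2$ and $\lambda_2+\lambda_3=0$, giving $L_2=L_3$. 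The three loci $\lambda_3=-\lambda_1/2$, $\lambda_3=-2\lambda_1+3\lambda_2$, $\lambda_3=-\lambda_2$ match precisely the three interior boundary segments that separate $R_1$ from $R_2$, $R_1$ from $R_3$, and $R_2$ from $R_3$ in the decomposition $R=R_1\cup R_2\cup R_3$.

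With the vanishing loci pinned down, the sign of each difference on either side is fixed by a single squaring step for $L_1-L_2$ and $L_1-L_3$: the linear part and the radical have opposite signs, and after squaring and cancelling the common linear factor one is left with a polynomial whose sign is controlled by the explicit inequalities defining each $R_i$. For $L_2-L_3$ the same strategy applies but is slightly more delicate, because both summands contain a radical; the identity for $4A_2-A_3$ makes the second squaring close up, and the remaining inequality reduces to a sign test on $\lambda_2+\lambda_3$, which is $\le0$ on $R_2$ and $\ge0$ on $R_3$. Along the way I would verify reality of the radicals: $L_2$ is real exactly when $\lambda_3\le-\lambda_1/2$, and a short case analysis shows $L_3$ is real exactly on $R_2\cup R_3$, so on the interior of $R_1$ neither $L_2$ nor $L_3$ is real and the claim $L_1=\max\{L_1,L_2,L_3\}$ holds trivially under the convention that a non-real candidate imposes no constraint.

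The main obstacle is the $L_2$ versus $L_3$ comparison, since both sides carry a square root and neither dominates termwise; everything hinges on the factorization of $4A_2-A_3$, which converts the inequality into a sign test on the single linear form $\lambda_2+\lambda_3$. The other two comparisons are cleaner because the vanishing linear factor in front of each radical is also a factor of the radicand.
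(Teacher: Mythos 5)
Your plan mirrors the paper's strategy: dispose of $R_1$ by observing that $L_2$ and $L_3$ are real only on (subsets of) $R_2\cup R_3$, then handle the comparisons $L_1$ vs.\ $L_2$ on $R_2$ and $L_1$ vs.\ $L_3$ on $R_3$ by isolate-and-square (where the linear part and the radicand share a common vanishing factor), and finally compare $L_2$ with $L_3$ on the overlap. Your identity
\[
4A_2 - A_3 = -3(\lambda_2+\lambda_3)(4\lambda_1+\lambda_2+\lambda_3)
\]
is correct and is a genuine improvement over the paper, which dispatches the hardest comparison with a bare ``by a direct computation''; your identity cleanly explains why $\lambda_2+\lambda_3=0$ is the dividing curve between $R_2$ and $R_3$.

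However, the final sentence of your $L_2$--$L_3$ argument overclaims. Writing $D=\sqrt{3}(4\lambda_1+\lambda_2+\lambda_3)$, $P=2\sqrt{A_2}$, $Q=\sqrt{A_3}$, the identity gives
\[
4\sqrt{3}\,(L_2-L_3)=\sqrt{3}(\lambda_2+\lambda_3)+P-Q
=\sqrt{3}(\lambda_2+\lambda_3)\cdot\frac{P+Q-D}{P+Q},
\]
so the sign of $L_2-L_3$ equals the sign of $\lambda_2+\lambda_3$ times the sign of $P+Q-D$. It does \emph{not} therefore ``reduce to a sign test on $\lambda_2+\lambda_3$'': you must still show $P+Q\le D$ on $R$ (wherever both radicals are real), and that is a genuine additional inequality. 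It can be done -- isolate $P$, square, use the factorization $D^2-4A_2+A_3=6\,(2\lambda_1+\lambda_2+\lambda_3)(4\lambda_1+\lambda_2+\lambda_3)$, square once more and cancel the positive factor $(4\lambda_1+\lambda_2+\lambda_3)^2$, which leaves the polynomial inequality $2\lambda_1^2+\lambda_1(\lambda_2+\lambda_3)+2\lambda_2\lambda_3\ge 0$ on $R$ -- but this is two more squarings and a nontrivial residual inequality, not a sign test. Your writeup should carry these steps out.

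One smaller slip: ``$L_3$ is real exactly on $R_2\cup R_3$'' is false. The radicand $A_3$ is nonnegative iff $\lambda_3\le 3\lambda_2-2\lambda_1$ (the first factor $2\lambda_1+\lambda_2-3\lambda_3$ is always $\ge0$ on $R$), and this set is all of $R_3$ but only a proper subset of $R_2$; for instance $(\lambda_2,\lambda_3)=(0,-3\lambda_1/5)$ lies in $R_2$ while $A_3<0$ there. The paper is careful to say ``a subregion of $R_2\cup R_3$.'' Your non-real-candidate convention absorbs the discrepancy, so the conclusion survives, but the stated reality set should be corrected.
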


\begin{proof}
Expression $L_1$ is well defined on the whole region $R$.  Expression $L_2$ on $R$ is well defined on the subregion
 $-(\lambda_1 +2\lambda_2)(\lambda_1 +2 \lambda_3) \ge 0$ or equivalently
 \[ -\frac{\lambda_1}{2} \le \lambda_2 \le \lambda_1 \ \text{and} \ \lambda_3 \le -\frac{\lambda_1}{2}.
 \]
 It is a subregion of $R_2 \cup R_3$. Expression $L_3$ on $R$ is well defined on the subregion
 $-(2\lambda_1+\lambda_2-3\lambda_3) (2\lambda_1-3\lambda_2 +\lambda_3) \ge 0$ or, equivalently,
 \[ \lambda_2 \le \lambda_1 \ \text{and} \ \lambda_3 \le -2 \lambda_1+ 3 \lambda_2,
 \]
 a subregion of $R_2 \cup R_3$. On region $R_1$, only $L_1$ is well defined, and therefore,
 $L_1=\max\{ L_1, L_2, L_3 \}$ on $R_1$.
 On region $R_2$,
 \[ L_2 - L_1 = \frac{1}{6}(\lambda_1+ 2\lambda_3) +\frac{1}{2\sqrt{3}}\sqrt{-(\lambda_1 +2\lambda_2)(\lambda_1 +2 \lambda_3)} \ge 0,
 \]
 therefore $L_2 \ge L_1$ on $R_2$. Let $r_2$ be the region bounded by
 \[ -\frac{2}{5} \lambda_1 \le \lambda_2 \le \lambda_1 \ \text{and} \ -\frac{1}{3}(2 \lambda_1 +\lambda_2) \le \lambda_3 \le \min\{ -2 \lambda_1+3 \lambda_2, -\lambda_2 \}.
 \]
 Then, $r_2$ is a subregion of $R_2$ and on $R_2$, $L_3$ is well defined on $r_2$. On $r_2$, $L_2-L_3 \ge 0$ by a direct computation. Hence, $L_2=\max\{ L_1, L_2, L_3 \}$ on $R_2$.
 On region $R_3$,
  \begin{equation*}
L_3 - L_1
= \frac{1}{12}(2\lambda_1-3\lambda_2+ \lambda_3) +\frac{1}{4\sqrt{3}}\sqrt{-(2\lambda_1+\lambda_2-3\lambda_3) (2\lambda_1-3\lambda_2 +\lambda_3)}
 \ge 0,
\end{equation*}
 therefore $L_3 \ge L_1$ on $R_3$. Let $r_3$ be the region bounded by
 \[ \frac{\lambda_1}{2} \le \lambda_2 \le \lambda_1 \ \text{and} \ -\lambda_2 \le \lambda_3 \le -\frac{\lambda_1}{2} .
 \]
 Then, $r_3$ is a subregion of $R_3$ and on $R_3$, $L_2$ is well defined on $r_3$. On $r_3$, $L_3-L_2 \ge 0$ by a direct computation. Hence, $L_3=\max\{ L_1, L_2, L_3 \}$ on $R_3$.
\end{proof}

 We now prove statement \ref{enum:sds3}. Assuming that statement \ref{enum:sds1} holds, we solve two equations $(ii)$ and $(iv)$, and then, for $\omega_2$ and $\omega_3$ we have the values (\ref{eq:sdsw2}) and (\ref{eq:sdsw3}).
  We first show that if the number $\omega_1$ is within range (\ref{eq:sdsw1}), then taking numbers $\omega_2$ and $\omega_3$ with expression (\ref{eq:sdsw2}) and (\ref{eq:sdsw3}), $\Omega$ satisfies conditions $(i)$--$(iv)$. Second, we show that when $\omega_1$ is out of range (\ref{eq:sdsw1}), $\Omega$ does not satisfy them.

  Suppose that list $\Omega$ satisfies range (\ref{eq:sdsw1}) and expressions (\ref{eq:sdsw2}) and (\ref{eq:sdsw3}). Then, we obtain $\omega_1+\omega_2 +\omega_3 = \lambda_1 + \lambda_2+ \lambda_3$ and
    \[
    (\omega_3-\frac{\lambda_2}{2}-\frac{\lambda_3}{2})^2
    =(\lambda_1-\omega_1)(\lambda_1-\omega_2) -\frac{1}{3}(\lambda_1-\lambda_2)(\lambda_1-\lambda_3)
    \]
    by a direct computation. Hence, conditions $(ii)$ and $(iv)$ hold.

    We now show the inequality $\omega_1 \ge\omega_2\ge \omega_3 \ge 0$. From the range of $\omega_1$ in (\ref{eq:sdsw1}), we have
    \[ \frac{1}{6} (2 \lambda_1 +3\lambda_2 +\lambda_3) \le \omega_1 \le \frac{1}{3}(\lambda_1+2 \lambda_2).
    \]
    Using the value $\omega_2$, we obtain $0 \le \omega_1 - \omega_2 \le \frac{1}{2}(\lambda_2 -\lambda_3) $, hence $\omega_1 \ge\omega_2$. In the same manner, using the value $\omega_2$, we obtain $0 \le \omega_2 - \omega_3 \le \frac{1}{2}(\lambda_2 -\lambda_3) $, hence $\omega_2 \ge\omega_3$. Using the range of $\omega_1$ and the value $\omega_3$, we have $\frac{1}{6} (2 \lambda_1 +\lambda_2 +3\lambda_3) \le \omega_3 \le \frac{1}{3}(\lambda_1+2 \lambda_2)$. From statement \ref{enum:sds4}, $2 \lambda_1 +\lambda_2 +3\lambda_3 \ge 0$ implies $\omega_3 \ge 0$, hence condition $(i)$ holds.

    Now, we show the inequality $\omega_3 - \frac{\lambda_2}{2}- \frac{\lambda_3}{2} \ge 0$. On region $R_1$, by Lemma \ref{lem:region} we have
    \[ L_1 \le \omega_1 \le \frac{1}{3}(\lambda_1+2 \lambda_2). \]
    Therefore,
    \[ \frac{1}{6} (2 \lambda_1 -3\lambda_2 +\lambda_3) \le \omega_3 - \frac{\lambda_2}{2}- \frac{\lambda_3}{2}  \le \frac{1}{3}(\lambda_1- \lambda_2). \]
    Since $-2 \lambda_1 +3 \lambda_2 \le \lambda_3$ on $R_1$, we have $\omega_3 - \frac{\lambda_2}{2}- \frac{\lambda_3}{2} \ge 0$ on $R_1$.
    On region $R_2$, by Lemma \ref{lem:region} we have
    \[ L_2 \le \omega_1 \le \frac{1}{3}(\lambda_1+2 \lambda_2).
    \]
    Then,
    \[ \frac{1}{12} (u - v ) \le \omega_3 - \frac{\lambda_2}{2}- \frac{\lambda_3}{2} \le \frac{1}{3}(\lambda_1-\lambda_2),\]
    where
\begin{align*}
     u= & 3( \lambda_1 -  \lambda_2 -  \lambda_3) -l, \\
     v= & \sqrt{-3( \lambda_1 +3 \lambda_2 -  \lambda_3 +l)( \lambda_1 -  \lambda_2 +3  \lambda_3 +l)}, \text{ and } \\
    l= & \sqrt{-3(\lambda_1 +2\lambda_2)(\lambda_1 +2 \lambda_3)}.
 \end{align*}
    We may easily check that $u$ and $v$ are nonnegative on $R_2$. Hence, $u-v \ge 0$ if and only if $u^2 - v^2 \ge 0 $. By a direct computation we have
    \[ u^2 -v^2 = 12(\lambda_2 +\lambda_3)(-3 \lambda_1 + l).
    \]
    On $R_2$ we have  $\lambda_2 +\lambda_3 \le 0$. In addition, we have $-3 \lambda_1 + l \le 0$, because
    \[ -(3 \lambda_1)^2 + l^2 = -6( \lambda_1(2 \lambda_1+\lambda_2 +3 \lambda_3)- 2 \lambda_3(\lambda_1-\lambda_2)) \le 0,
    \]
    where we use $2 \lambda_1+\lambda_2 +3 \lambda_3 \ge 0$, $\lambda_1 \ge 0$, $\lambda_1 - \lambda_2 \ge 0$, and $ \lambda_3 \le 0$ on $R_2$. Therefore, $u^2 -v^2 \ge 0$ and therefore, $\omega_3 - \frac{\lambda_2}{2}- \frac{\lambda_3}{2} \ge 0$ on $R_2$.
    On region $R_3$, by Lemma \ref{lem:region} we have
    \[ L_3 \le \omega_1 \le \frac{1}{3}(\lambda_1+2 \lambda_2).
    \]
    Then,
    \[ \frac{1}{24} (u - v ) \le \omega_3 - \frac{\lambda_2}{2} - \frac{\lambda_3}{2} \le \frac{1}{3}(\lambda_1- \lambda_2),\]
    where
 \begin{align*}
    u= & 6 \lambda_1 - 3 \lambda_2 - 3 \lambda_3 -l, \\
    v= & \sqrt{- 3(2\lambda_1 +3 \lambda_2 -  5\lambda_3 +l)( 2\lambda_1 -  5\lambda_2 +3  \lambda_3 +l)}, \text{ and }\\
    l= &  \sqrt{-3(2\lambda_1+\lambda_2-3\lambda_3) (2\lambda_1-3\lambda_2 +\lambda_3)}.
 \end{align*}
    We may check that $u$ and $v$ are nonnegative on $R_3$ and $u^2 =v^2$. Hence, $ \omega_3 - \frac{\lambda_2}{2}- \frac{\lambda_3}{2} \ge 0$ on $R_3$. Therefore $\Omega$ satisfies conditions $(i)$--$(iv)$.

 Now, consider the other case when the number $\omega_1$ is out of range (\ref{eq:sdsw1}). Assume that $\omega_1 >\frac{1}{3}(\lambda_1+2 \lambda_2)$. Then, the inside of the square root part of $\omega_2$ is negative, hence $\omega_2$ is not real, violating the assumption that $\omega_2$ is real.

    Consider region  $R_1$ and assume that $\omega_1 < L_1$. For $\frac{1}{3}(\lambda_1+2 \lambda_3) \le \omega_1 < L_1$, a direct computation gives $\omega_1 < \omega_2$, violating condition $\omega_1 \ge \omega_2$.
    For $\omega_1 < \frac{1}{3}(\lambda_1+2 \lambda_3)$, $\omega_2$ is not real, violating the assumption that $\omega_2$ is real.

    Consider region  $R_2$ and assume that $\omega_1 < L_2$. For $L_1 \le \omega_1 < L_2$, a direct computation gives $\omega_3 <0$, violating condition $\omega_3 \ge 0$.
    For $\frac{1}{3}(\lambda_1+2 \lambda_3) \le \omega_1 < L_1$, we have $\omega_1 < \omega_2$, violating condition $\omega_1 \ge \omega_2$.
    For $\omega_1 < \frac{1}{3}(\lambda_1+2 \lambda_3)$, we have that $\omega_2$ is not real, violating the assumption that $\omega_2$ is real.

    Consider region $R_3$ and assume that $\omega_1 < L_3$. For $L_1 \le \omega_1 < L_3$, a direct computation gives $\omega_3 - \frac{\lambda_2}{2}- \frac{\lambda_3}{2} < 0$, violating condition $\omega_3 - \frac{\lambda_2}{2}- \frac{\lambda_3}{2} \ge 0$.
    For $\frac{1}{3}(\lambda_1+2 \lambda_3) \le \omega_1 < L_1$, we have $\omega_1 < \omega_2$, violating condition $\omega_1 \ge \omega_2$.
    For $\omega_1 < \frac{1}{3}(\lambda_1+2 \lambda_3)$, we have that $\omega_2$ is not real, violating the pre-assumption that $\omega_2$ is real. Therefore if $\omega_1$ is out of range (\ref{eq:sdsw1}), then $\Omega$ does not satisfy conditions $(i)$--$(iv)$.
    Hence, we conclude statement \ref{enum:sds3}.
\end{enumerate}
\end{proof}

%
%
%

\section{Doubly stochastic $3 \times 3$ nonnegative matrices} \label{sec:Doubly-stochas}

We discuss a double stochastic $3 \times 3$ nonnegative matrix case with prescribed real or complex eigenvalues and diagonal entries. Consider first the case of real eigenvalues. Statement \ref{enum:rds4} in the following Theorem \ref{the:rds} was proved by Perfect and Minsky \cite{PerfectMirsky65} in 1965.

\begin{theorem}[real, doubly stochastic case] \label{the:rds}
  Let $\Lambda=\{\lambda_1, \lambda_2, \lambda_3\}$ and $\Omega=\{\omega_1, \omega_2, \omega_3\}$
be lists of real numbers with $\lambda_1 \ge \lambda_2 \ge\lambda_3$ and $\omega_1 \ge\omega_2\ge \omega_3$.

 \begin{enumerate}
 \item \label{enum:rds1} There is a doubly stochastic nonnegative matrix with eigenvalues $\Lambda$ and diagonal entries $\Omega$ if and only if the lists $\Lambda$ and $\Omega$ satisfy
 \begin{description}
     \item [] (i) $\omega_i \ge 0$ for $i= $ 1, 2, and 3,
    \item [] (ii) $\omega_1+\omega_2 +\omega_3 = \lambda_1 + \lambda_2+ \lambda_3$,
    \item [] (iii) $\omega_3 - \frac{\lambda_2}{2}- \frac{\lambda_3}{2} \ge 0$,
    \item [] (iv) $(\lambda_1-\omega_1)(\lambda_1-\omega_2) -\frac{1}{3}(\lambda_1-\lambda_2)(\lambda_1-\lambda_3) \ge 0$,
    \item [] (v) $(\omega_3-\frac{\lambda_2}{2}-\frac{\lambda_3}{2})^2 \ge
    (\lambda_1-\omega_1)(\lambda_1-\omega_2) -\frac{1}{3}(\lambda_1-\lambda_2)(\lambda_1-\lambda_3)$.
 \end{description}

\item \label{enum:rds3} If $\Lambda$ and $\Omega$ satisfy the conditions of statement \ref{enum:rds1},
then the exact range for $\omega_1$ is
     \begin{equation}\label{eq:rdsw1}
        \max\{ L_1, L_2, L_3 \} \le \omega_1 \le \min\{\lambda_1 + \lambda_2+\lambda_3, U_2 \},
     \end{equation}
  where $ L_1, L_2$ and $L_3$ are defined in (\ref{eq:Li}) and
  \begin{equation} \label{eq:U2}
        U_2 =\frac{1}{2}(\lambda_2 +\lambda_3)+ \frac{1}{2\sqrt{3}} \sqrt{4(\lambda_1-\lambda_2)(\lambda_1 -\lambda_3)+3(\lambda_2 -\lambda_3)^2}.
  \end{equation}

  For the values for $\omega_2$ and $\omega_3$, we may take
  \begin{equation}\label{eq:rdsw2}
  \omega_2=
    \begin{cases}
        \frac{1}{2}(\lambda_1 +\lambda_2 +\lambda_3 -\omega_1)& \\
        \qquad +\frac{1}{2\sqrt{3}} \sqrt{-
        (3\omega_1 -\lambda_1 -2\lambda_2)
        (3\omega_1 -\lambda_1 -2\lambda_3)},
        &\text{if }\ \omega_1\le \frac{1}{3}(\lambda_1 +2 \lambda_2),\\
        \frac{1}{2}(\lambda_1 +\lambda_2 +\lambda_3 -\omega_1), &\text{otherwise },
    \end{cases}
  \end{equation}
  and
   \begin{equation}\label{eq:rdsw3}
  \omega_3=
    \begin{cases}
        \frac{1}{2}(\lambda_1 +\lambda_2 +\lambda_3 -\omega_1)& \\
        \qquad -\frac{1}{2\sqrt{3}} \sqrt{-
        (3\omega_1 -\lambda_1 -2\lambda_2)
        (3\omega_1 -\lambda_1 -2\lambda_3)},
        &\text{if }\ \omega_1\le \frac{1}{3}(\lambda_1 +2 \lambda_2),\\
        \frac{1}{2}(\lambda_1 +\lambda_2 +\lambda_3 -\omega_1), &\text{otherwise. }
    \end{cases}
  \end{equation}

\item \cite{PerfectMirsky65} \label{enum:rds4} The list $\Lambda$ is realizable as the eigenvalues of a doubly stochastic nonnegative matrix if and only if the following holds:
    \[ 2\lambda_1 + \lambda_2 + 3\lambda_3 \ge 0.
    \]

  \end{enumerate}
\end{theorem}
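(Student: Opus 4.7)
The plan is to parallel the proof of Theorem \ref{the:sds} while exploiting the extra degree of freedom that a doubly stochastic (but not necessarily symmetric) $3 \times 3$ matrix possesses. Once the trace condition (ii) is imposed, every doubly stochastic matrix with diagonal $\Omega$ and common row/column sum $\lambda_1$ is forced into the one-parameter family
\[
A_s = \begin{pmatrix} \omega_1 & s & \lambda_1-\omega_1-s \\ 2t-s & \omega_2 & \omega_1-\omega_3+s \\ \omega_2-\omega_3+s & \lambda_1-\omega_2-s & \omega_3 \end{pmatrix}, \qquad t := \omega_3 - \tfrac{\lambda_2+\lambda_3}{2}.
\]
Entrywise nonnegativity of $A_s$, using $\omega_1 \ge \omega_2 \ge \omega_3 \ge 0$, reduces to $s \in [0, 2t]$, which forces condition (iii). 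Because $\lambda_1$ is automatically an eigenvalue of $A_s$ (with eigenvector $(1,1,1)^{\text{tr}}$), the characteristic polynomial factors as $(\lambda_1 - \lambda) q(\lambda)$, and matching $q(\lambda)$ to $(\lambda_2 - \lambda)(\lambda_3 - \lambda)$ reduces to the single quadratic
\[
s^2 - 2ts + \Delta = 0, \qquad \Delta := (\lambda_1-\omega_1)(\lambda_1-\omega_2) - \tfrac{1}{3}(\lambda_1-\lambda_2)(\lambda_1-\lambda_3).
\]
Its roots $s_\pm = t \pm \sqrt{t^2 - \Delta}$ are real iff $\Delta \le t^2$ (condition (v)), and the smaller root $s_-$ lies in $[0, 2t]$ iff $\Delta \ge 0$ (condition (iv)). Hence conditions (i)--(v) are exactly the requirements for a realizer to exist, and plugging $s = s_-$ into $A_s$ exhibits one, completing statement \ref{enum:rds1}.

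Statement \ref{enum:rds4} then follows by combining statement \ref{enum:rds1} with Theorem \ref{the:sds}.\ref{enum:sds4}. The ``if'' direction is immediate since a symmetric stochastic realizer is doubly stochastic. For ``only if'', assume some $\Omega$ satisfies (i)--(v); the algebraic identity used in the proof of Theorem \ref{the:sds}.\ref{enum:sds4}, after discarding the term annihilated by (ii), becomes
\[
(2\lambda_1+\lambda_2+3\lambda_3)(2\lambda_1+3\lambda_2+\lambda_3) = 12(\omega_1\omega_2+\omega_1\omega_3+\omega_2\omega_3) + 12\,E,
\]
where $E = (\omega_3 - \tfrac{\lambda_2+\lambda_3}{2})^2 - \Delta \ge 0$ by (v). The right-hand side is nonnegative by (i), and the two factors on the left sum to $4(\omega_1+\omega_2+\omega_3) \ge 0$; hence both factors are nonnegative, so in particular $2\lambda_1+\lambda_2+3\lambda_3 \ge 0$.

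For statement \ref{enum:rds3}, the lower bound $\max\{L_1, L_2, L_3\}$ is inherited from Theorem \ref{the:sds}.\ref{enum:sds3}: when (v) holds with equality, the symmetric-stochastic formulas (\ref{eq:sdsw2})--(\ref{eq:sdsw3}) already give valid $(\omega_2, \omega_3)$, so the bound is achievable; the matching lower-bound inequality is obtained by rerunning the three-subregion argument of Lemma \ref{lem:region} on the relaxed conditions (i)--(v). For the upper bound I split at $\omega_1 = \tfrac{1}{3}(\lambda_1+2\lambda_2)$. In the lower regime the same symmetric-stochastic choice works. In the upper regime I would take $\omega_2 = \omega_3 = \tfrac{1}{2}(\lambda_1+\lambda_2+\lambda_3-\omega_1)$; writing $x = \lambda_1-\omega_1$, a direct substitution shows that condition (v) becomes a quadratic in $x$ with roots $\tfrac{2}{3}(\lambda_1-\lambda_2)$ and $\tfrac{2}{3}(\lambda_1-\lambda_3)$ and is automatically satisfied on this regime, while condition (iv) becomes a quadratic in $\omega_1$ whose larger root is exactly $U_2$ of (\ref{eq:U2}). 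Combined with $\omega_2 \ge 0$ (equivalently $\omega_1 \le \lambda_1+\lambda_2+\lambda_3$), this yields the bound $\min\{\lambda_1+\lambda_2+\lambda_3, U_2\}$.

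The main obstacle will be the sharpness of the lower bound in statement \ref{enum:rds3}. Unlike the symmetric stochastic case in Theorem \ref{the:sds}, where the analogue of condition (v) is an equality that pins down $(\omega_2, \omega_3)$ for each $\omega_1$, the doubly stochastic relaxation gives an entire one-parameter family of feasible $(\omega_2, \omega_3)$, and one must verify that no member of this family allows $\omega_1$ to fall below $\max\{L_1, L_2, L_3\}$. I expect this to require, on each of the three subregions $R_1, R_2, R_3$ of Lemma \ref{lem:region}, a careful tracking of which of (i), (iii), (iv), or the ordering $\omega_1 \ge \omega_2 \ge \omega_3$ becomes violated first as $\omega_1$ is driven downward past the corresponding $L_i$.
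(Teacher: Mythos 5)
Your proof of statement~\ref{enum:rds1} is essentially the paper's argument in a slightly tighter packaging. The paper parametrizes the general doubly stochastic candidate by two free entries $s=a_{12}$ and $t=a_{21}$ and then reads off, from the characteristic polynomial, that their sum is forced by condition $(iii)$ and their product by condition $(iv)$; you instead use condition $(ii)$ from the start to collapse to a one-parameter family $A_s$ and get the single quadratic $s^2-2ts+\Delta=0$, whose Vieta relations are the same sum/product facts. Both approaches then recognize condition $(v)$ as the discriminant condition and pick a root. Your statement~\ref{enum:rds4} reduction to the algebraic identity used for Theorem~\ref{the:sds}.\ref{enum:sds4} is exactly the paper's argument, only phrased as an inequality rather than an equality because the extra term $E$ is nonnegative by $(v)$.

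Two points deserve flagging. First, your claim that nonnegativity of $A_s$ ``reduces to $s\in[0,2t]$'' is not literally true: you must also guarantee $\lambda_1-\omega_1-s\ge 0$ and $\lambda_1-\omega_2-s\ge 0$. This does hold for the root you propose (from $(ii)$ one checks $\lambda_1-\omega_1-s_{-}=\omega_2-\tfrac{\lambda_2+\lambda_3}{2}+\sqrt{t^2-\Delta}\ge 0$ and $\lambda_1-\omega_2-s_{-}=\omega_1-\tfrac{\lambda_2+\lambda_3}{2}+\sqrt{t^2-\Delta}\ge 0$, matching the paper's verification for its matrix $B$), but as written this is an unverified step in the ``if'' direction. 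Second, and more substantially, the lower-bound sharpness in statement~\ref{enum:rds3} is a genuine gap, as you yourself note. Your closing sentence predicts that one of $(i)$, $(iii)$, $(iv)$ or the ordering will be the binding constraint below $\max\{L_1,L_2,L_3\}$. That is not what the paper finds: on each of $R_1$, $R_2$, $R_3$ the paper exhibits a triangular region of feasible $(\omega_1,\omega_2)$ pairs (cut out by $(i)$, $(ii)$, the ordering, and on $R_3$ also by $(iii)$) and shows that the paraboloid $W(\omega_1,\omega_2)$ is strictly negative on it by evaluating $W$ at the vertices $p_0,\dots,p_8$; that is, it is condition $(v)$ that breaks. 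So the correct mechanism is absent from your outline, not just its execution. Your upper-bound analysis (splitting at $\omega_1=\tfrac13(\lambda_1+2\lambda_2)$, observing that with $\omega_2=\omega_3$ condition $(v)$ becomes the quadratic in $x=\lambda_1-\omega_1$ with roots $\tfrac23(\lambda_1-\lambda_2)$ and $\tfrac23(\lambda_1-\lambda_3)$, and that condition $(iv)$ delivers $U_2$) is correct and, in this form, a slightly cleaner way of organizing what the paper achieves through its region split $Q_1$, $Q_2$ in Lemma~\ref{lem:regionQ}.
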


\label{enum:rds2}  For $\Lambda$ and $\Omega$ satisfying the conditions of statement \ref{enum:rds1}, we find the following doubly stochastic nonnegative matrix
  \[
  \left(\begin{array}{cccccccc}
  \omega_1 & \omega_3-\frac{\lambda_2}{2}-\frac{\lambda_3}{2} +\sqrt{W} & \omega_2-\frac{\lambda_2}{2}-\frac{\lambda_3}{2} -\sqrt{W} \\
  \omega_3-\frac{\lambda_2}{2}-\frac{\lambda_3}{2}-\sqrt{W} & \omega_2 & \omega_1-\frac{\lambda_2}{2}-\frac{\lambda_3}{2} +\sqrt{W}\\
  \omega_2-\frac{\lambda_2}{2}-\frac{\lambda_3}{2}+\sqrt{W}& \omega_1-\frac{\lambda_2}{2}-\frac{\lambda_3}{2} -\sqrt{W} &  \omega_3
  \end{array}\right),
  \]
  where
\begin{equation}\label{eq:w}
    W=(\omega_3-\frac{\lambda_2}{2}-\frac{\lambda_3}{2})^2
    -(\lambda_1-\omega_1)(\lambda_1-\omega_2) +\frac{1}{3}(\lambda_1-\lambda_2)(\lambda_1-\lambda_3),
\end{equation}
  with eigenvalues $\Lambda$ and diagonal entries $\Omega$.
For a usual stochastic matrix, we may take $\lambda_1 = 1$.

\begin{proof}
\begin{enumerate}
\item
 Suppose that the list $\Lambda$ is realizable as the eigenvalues of a doubly stochastic nonnegative matrix $A$ with diagonal entries $\Omega$. Then, the elements $\omega_1$, $\omega_2$, and $\omega_3$ are nonnegative, hence we have condition $(i)$.
 The characteristic polynomial $\Ch_1(\lambda)$ of $A$, using the eigenvalues, is
 \[ \Ch_1(\lambda) = \lambda_1 \lambda_2 \lambda_3 -(\lambda_1 \lambda_2+\lambda_1 \lambda_3 +\lambda_2 \lambda_3)\lambda+ (\lambda_1 + \lambda_2 + \lambda_3)\lambda^2-\lambda^3. \]

  We may write the doubly stochastic nonnegative matrix $A$ without loss of generality as
  \[ A=\left(\begin{array}{cccccccc}
    \omega_1 & s & \lambda_1 - \omega_1 -s \\
    t  & \omega_2 & \lambda_1 - \omega_2 -t  \\
    \lambda_1 - \omega_1 -t  & \lambda_1 - \omega_2 -s  & \omega_3
             \end{array}\right),
  \]
 where each entry is nonnegative and $\omega_3=s+t +\omega_1+\omega_2-\lambda_1$.
 From this matrix we have another form of the characteristic polynomial $\Ch_2(\lambda)$ of $A$, which is
  \begin{multline*}
   \Ch_2(\lambda) = (-3s\, t +(s+t-\omega_1 -\omega_2) \lambda_1 +3 \omega_1 \omega_2) \lambda_1 \\
   -(-3s\, t  +(2 s +2t  +\omega_1  + \omega_2  -2 \lambda_1)\lambda_1  +3 \omega_1 \omega_2 )\lambda\\
   + (\omega_1 + \omega_2  + \omega_3 )\lambda^2
   - \lambda^3.
  \end{multline*}
Then, the two characteristic polynomials $\Ch_1(\lambda)$ and $\Ch_2(\lambda)$ are identical. Comparing the coefficients of $\lambda^2$, we have
\[\lambda_1 + \lambda_2 +\lambda_3  = \omega_1+ \omega_2+ \omega_3, \]
which is condition $(ii)$.
From $\omega_3=s+t +\omega_1+\omega_2-\lambda_1$ we have
\begin{equation*}\
    \frac{s+t}{2}=\omega_3-\frac{\lambda_2}{2}-\frac{\lambda_3}{2},
\end{equation*}
which is nonnegative because $s$ and $t$ are nonnegative, hence we have $(iii)$.
Comparing the coefficients of $\lambda$, we have
\[\lambda_1 \lambda_2+\lambda_1 \lambda_3+\lambda_2 \lambda_3 = -3s\, t  +(2 s +2t  +\omega_1  + \omega_2  -2 \lambda_1)\lambda_1  +3 \omega_1 \omega_2. \]
Then,
\begin{equation*}
    s\,t = (\lambda_1-\omega_1)(\lambda_1-\omega_2) -\frac{1}{3}(\lambda_1-\lambda_2)(\lambda_1-\lambda_3),
\end{equation*}
which is nonnegative, hence condition $(iv)$ holds.
By the inequality of arithmetic and geometric means, we have
$ (\frac{s+t}{2})^2-s\,t \ge 0$, which is condition $(v)$.
Therefore all conditions $(i)$--$(v)$ are satisfied.

Now, suppose that the lists $\Lambda$ and $\Omega$ satisfy conditions $(i)$--$(v)$.
Let $B$ be the following matrix
\[ B=
  \left(\begin{array}{cccccccc}
  \omega_1 & \omega_3-\frac{\lambda_2}{2}-\frac{\lambda_3}{2} +\sqrt{W} & \omega_2-\frac{\lambda_2}{2}-\frac{\lambda_3}{2} -\sqrt{W} \\
  \omega_3-\frac{\lambda_2}{2}-\frac{\lambda_3}{2}-\sqrt{W} & \omega_2 & \omega_1-\frac{\lambda_2}{2}-\frac{\lambda_3}{2} +\sqrt{W}\\
  \omega_2-\frac{\lambda_2}{2}-\frac{\lambda_3}{2}+\sqrt{W}& \omega_1-\frac{\lambda_2}{2}-\frac{\lambda_3}{2} -\sqrt{W} &  \omega_3
  \end{array}\right),
  \]
  where $W$ is defined in (\ref{eq:w}).
By condition $(i)$, $\omega_i$ is nonnegative for $i= $ 1, 2, and 3.
Let $s=\omega_3-\frac{\lambda_2}{2}-\frac{\lambda_3}{2}+\sqrt{W}$ and $t=\omega_3-\frac{\lambda_2}{2}-\frac{\lambda_3}{2}-\sqrt{W}$.
The number $W$ is nonnegative by condition $(v)$, hence $s$ and $t$ are real numbers.
Since the sum $s+t$ and product $s\,t$ are nonnegative by conditions $(iii)$ and $(iv)$, respectively, we have that $s$ and $t$ are nonnegative.

Because $\omega_3-\frac{\lambda_2}{2}-\frac{\lambda_3}{2}-\sqrt{W} \ge 0$ and $\omega_1 \ge\omega_2\ge \omega_3$ by pre-assumption, we have that $\omega_1 - \frac{\lambda_2}{2}- \frac{\lambda_3}{2} -\sqrt{W}\ge 0$ and $\omega_2 - \frac{\lambda_2}{2}- \frac{\lambda_3}{2} -\sqrt{W}\ge 0$. Hence, the matrix $B$ is nonnegative. All sums of each row and column entries are equal to $\lambda_1$, hence the matrix $B$ is doubly stochastic.
The characteristic polynomial $\Ch_3(\lambda)$ of $B$ is factored as
\[
 \Ch_3(\lambda)=\det(B-\lambda I)
 =(\lambda_1-\lambda)(\lambda_2-\lambda)
(\lambda_3-\lambda),
 \]
where we use conditions $(ii)$, and therefore, the eigenvalues of $B$ are $\Lambda$. Therefore, we obtain a doubly stochastic nonnegative matrix with eigenvalues $\Lambda$ and diagonal entries $\Omega$, hence this concludes statement \ref{enum:rds1}.

\item[\ref{enum:rds4}.] The proof of statement \ref{enum:rds4} is similar to that of statement \ref{enum:sds4} of Theorem \ref{the:sds}, hence we omit it.

\item[\ref{enum:rds3}.] We need the following lemma to prove statement \ref{enum:rds3}.

\begin{lemma}\label{lem:regionQ}
 Let $R$ be the region on the plane with $\lambda_2$- and $\lambda_3$- axes, bounded by
 \[ -\frac{\lambda_1}{2} \le \lambda_2 \le \lambda_1 \ \text{and} \ -\frac{1}{3}(2\lambda_1+ \lambda_2) \le \lambda_3 \le \lambda_2. \]
 Let $Q_1$ be the subregion of $R$ bounded by
 \[ -\frac{\lambda_1}{2} \le \lambda_2 \le \lambda_1 \ \text{and} \ -\frac{1}{3}(2\lambda_1+\lambda_2) \le \lambda_3 \le \min\{\lambda_2,
 -\frac{\lambda_1^2+2 \lambda_1 \lambda_2}
 {2 \lambda_1 + \lambda_2} \}, \]
and $Q_2$ the subregion of $R$ bounded by
 \[(-2+\sqrt{3})\lambda_1 \le \lambda_2 \le \lambda_1 \ \text{and} \ -\frac{\lambda_1^2+2 \lambda_1 \lambda_2}
 {2 \lambda_1 + \lambda_2} \le \lambda_3 \le \lambda_2. \]
 Then, $\lambda_1+\lambda_2+\lambda_3=
 \min\{\lambda_1+\lambda_2+\lambda_3,\ U_2\}$ on subregion $Q_1$ and $U_2=\min\{ \lambda_1+\lambda_2+\lambda_3, U_2 \}$ on subregion $Q_2$.
\end{lemma}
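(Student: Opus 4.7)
The plan is to reduce the lemma to the single sign comparison between $\lambda_1+\lambda_2+\lambda_3$ and $U_2$, and to compute this sign by squaring. The separating curve $\lambda_3 = -\frac{\lambda_1^2+2\lambda_1\lambda_2}{2\lambda_1+\lambda_2}$ appearing in the definitions of $Q_1$ and $Q_2$ should emerge as the locus where $\lambda_1+\lambda_2+\lambda_3 = U_2$.

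First I would verify that on $R$ the three quantities $\lambda_1$, $2\lambda_1+\lambda_2$, and $2\lambda_1+\lambda_2+\lambda_3$ are all nonnegative: the bound $\lambda_2 \ge -\lambda_1/2$ gives $2\lambda_1+\lambda_2 \ge \tfrac{3}{2}\lambda_1 \ge 0$, and then $\lambda_3 \ge -\tfrac{1}{3}(2\lambda_1+\lambda_2)$ gives $2\lambda_1+\lambda_2+\lambda_3 \ge \tfrac{2}{3}(2\lambda_1+\lambda_2) \ge \lambda_1 \ge 0$. Writing
\[ \lambda_1+\lambda_2+\lambda_3-U_2 = \tfrac{1}{2}\!\left(2\lambda_1+\lambda_2+\lambda_3 - \tfrac{1}{\sqrt{3}}\sqrt{D}\right), \]
with $D = 4(\lambda_1-\lambda_2)(\lambda_1-\lambda_3)+3(\lambda_2-\lambda_3)^2 \ge 0$, and using that both sides of the inner inequality are nonnegative, the sign of $\lambda_1+\lambda_2+\lambda_3-U_2$ on $R$ coincides with the sign of $3(2\lambda_1+\lambda_2+\lambda_3)^2 - D$.

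The crux is then the algebraic identity
\[ 3(2\lambda_1+\lambda_2+\lambda_3)^2 - D = 8\bigl((2\lambda_1+\lambda_2)\lambda_3 + \lambda_1(\lambda_1+2\lambda_2)\bigr), \]
which I would establish by direct expansion. I expect this to be the only nontrivial calculation; the rest is sign-tracking. Since $2\lambda_1+\lambda_2 > 0$ on $R$, the right-hand side has the same sign as $\lambda_3 + \frac{\lambda_1(\lambda_1+2\lambda_2)}{2\lambda_1+\lambda_2}$, so $\lambda_1+\lambda_2+\lambda_3 \le U_2$ is equivalent to $\lambda_3 \le -\frac{\lambda_1^2+2\lambda_1\lambda_2}{2\lambda_1+\lambda_2}$, and the reverse inequality is equivalent to $\lambda_1+\lambda_2+\lambda_3 \ge U_2$.

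Finally I would match these equivalent conditions to $Q_1$ and $Q_2$. The definition of $Q_1$ restricts $R$ precisely to the half-space $\lambda_3 \le -\frac{\lambda_1^2+2\lambda_1\lambda_2}{2\lambda_1+\lambda_2}$, on which $\min\{\lambda_1+\lambda_2+\lambda_3,\, U_2\} = \lambda_1+\lambda_2+\lambda_3$; similarly $Q_2$ is the portion of $R$ on the other side of this curve, on which the minimum equals $U_2$. The bound $\lambda_2 \ge (\sqrt{3}-2)\lambda_1$ defining the left edge of $Q_2$ is exactly the solvability condition of $-\frac{\lambda_1^2+2\lambda_1\lambda_2}{2\lambda_1+\lambda_2} \le \lambda_2$ (solving the quadratic $\lambda_2^2+4\lambda_1\lambda_2+\lambda_1^2\ge 0$ on $\lambda_2 \ge -\lambda_1/2$), which ensures $Q_2$ is nonempty. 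The main obstacle is simply establishing the boxed algebraic identity; everything else is bookkeeping.
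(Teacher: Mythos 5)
Your proof is correct and takes essentially the same approach as the paper: both compare $\lambda_1+\lambda_2+\lambda_3$ with $U_2$ by squaring (justified since $2\lambda_1+\lambda_2+\lambda_3 \ge 0$ on $R$), and your boxed identity $3(2\lambda_1+\lambda_2+\lambda_3)^2 - D = 8(\lambda_1^2+2\lambda_1\lambda_2+2\lambda_1\lambda_3+\lambda_2\lambda_3)$ is just the paper's equality $\left(\lambda_1+\tfrac{\lambda_2}{2}+\tfrac{\lambda_3}{2}\right)^2 - \tfrac{1}{12}D = \tfrac{2}{3}(\lambda_1^2+2\lambda_1\lambda_2+2\lambda_1\lambda_3+\lambda_2\lambda_3)$ multiplied through by $12$. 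The only additions beyond the paper's argument are your explicit verification of the nonnegativity of $2\lambda_1+\lambda_2+\lambda_3$ and your derivation of the $(\sqrt{3}-2)\lambda_1$ bound as the solvability threshold for the curve, both of which are correct and slightly more complete than what is printed.
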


\begin{proof}
 We have
\begin{equation*}
\lambda_1+\lambda_2+\lambda_3 -U_2
= \lambda_1 + \frac{\lambda_2}{2} + \frac{\lambda_3}{2} - \frac{1}{2\sqrt{3}} \sqrt{4(\lambda_1-\lambda_2)(\lambda_1 -\lambda_3)+3(\lambda_2 -\lambda_3)^2}.
\end{equation*}
Since $ \lambda_1 + \frac{\lambda_2}{2} + \frac{\lambda_3}{2} \ge 0$, we consider that
\begin{multline*}
\left(\lambda_1 + \frac{\lambda_2}{2} + \frac{\lambda_3}{2}\right)^2 - \left(\frac{1}{2\sqrt{3}} \sqrt{4(\lambda_1-\lambda_2)(\lambda_1 -\lambda_3)+3(\lambda_2 -\lambda_3)^2}\right)^2 \\
= \frac{2}{3} (\lambda_1^2+2 \lambda_1 \lambda_2+ 2 \lambda_1 \lambda_3 + \lambda_2 \lambda_3).
\end{multline*}
On $Q_1$, we have $\lambda_1^2+2 \lambda_1 \lambda_2+ 2 \lambda_1 \lambda_3 + \lambda_2 \lambda_3 \le 0$, hence
$\lambda_1+\lambda_2+\lambda_3 \le U_2$, which says that $\lambda_1+\lambda_2+\lambda_3=
 \min\{\lambda_1+\lambda_2+\lambda_3,\ U_2\}$ on $Q_1$. On $Q_2$, we have $\lambda_1^2+2 \lambda_1 \lambda_2+ 2 \lambda_1 \lambda_3 + \lambda_2 \lambda_3 \ge 0$, that is,
$\lambda_1+\lambda_2+\lambda_3 \ge U_2$, which says that $U_2=
 \min\{\lambda_1+\lambda_2+\lambda_3,\ U_2\}$ on $Q_2$.
\end{proof}

We assume that statement \ref{enum:rds1} holds. Then, from statement \ref{enum:rds4}, we have $ 2\lambda_1 + \lambda_2 + 3\lambda_3 \ge 0.$
    We first show that if the number $\omega_1$ is within range (\ref{eq:rdsw1}), then taking numbers $\omega_2$ and $\omega_3$ with expression (\ref{eq:rdsw2}) and (\ref{eq:rdsw3}), the list $\Omega$ satisfies conditions $(i)$--$(iv)$. Second, we show that when $\omega_1$ is out of range (\ref{eq:rdsw1}), $\Omega$ does not satisfy them.

    Suppose that $\Omega$ is within ranges (\ref{eq:rdsw1}) $-$
 (\ref{eq:rdsw3}). We split the range of $\omega_1$ into two subranges.
 Assume first that $\omega_1$ is in the subrange
 \[ \max \{ L_1, L_2, L_3 \} \le \omega_1 \le
    \frac{1}{3}(\lambda_1+2 \lambda_2), \]
 and then take
 \[ \omega_2=\frac{1}{2}(\lambda_1 +\lambda_2 +\lambda_3 -\omega_1)+\frac{1}{2\sqrt{3}} \sqrt{-
        (3\omega_1 -\lambda_1 -2\lambda_2)
        (3\omega_1 -\lambda_1 -2\lambda_3)}, \]
        and
\[ \omega_3=\frac{1}{2}(\lambda_1 +\lambda_2 +\lambda_3 -\omega_1) -\frac{1}{2\sqrt{3}} \sqrt{-
        (3\omega_1 -\lambda_1 -2\lambda_2)
        (3\omega_1 -\lambda_1 -2\lambda_3)}. \]
Then, the list $\Omega$ satisfies conditions
$(i)$--$(iv)$, which follows from the proof of statement \ref{enum:sds3} of Theorem \ref{the:sds}.

Now, assume the other subrange
\[ \frac{1}{3}(\lambda_1+2 \lambda_2) \le \omega_1 \le
    \min\{ \lambda_1 + \lambda_2+\lambda_3, U_2 \}, \]
 and then take
 \[ \omega_2=\omega_3=\frac{1}{2}(\lambda_1 +\lambda_2 +\lambda_3 -\omega_1). \]

From this assumption we can check $\omega_1+\omega_2 +\omega_3 = \lambda_1 + \lambda_2+ \lambda_3$, which is condition $(ii)$.

Consider the list $\Lambda$ on region $Q_1$. On $Q_1$, by Lemma \ref{lem:regionQ}, $\lambda_1 + \lambda_2+\lambda_3
=\min\{ \lambda_1 + \lambda_2+\lambda_3, U_2 \}$, hence $\frac{1}{3}(\lambda_1+2 \lambda_2) \le \omega_1 \le \lambda_1 +\lambda_2 +\lambda_3 $.
Since $\omega_1 - \omega_2= \frac{1}{2}(3 \omega_1 - \lambda_1 -\lambda_2 -\lambda_3)$, $\frac{1}{2}(\lambda_2 -\lambda_3) \le \omega_1 - \omega_2 \le \lambda_1 +\lambda_2 +\lambda_3$. Hence, $\omega_1 \ge \omega_2$.  Since $\omega_3= \frac{1}{2}(\lambda_1 +\lambda_2 +\lambda_3-\omega_1)$,
$0\le \omega_3 \le \frac {1} {6} (2\lambda_1 + \lambda_2 +
   3\lambda_3)$. Therefore, condition $(i)$ holds.

From $0\le \omega_3 \le \frac {1} {6} (2\lambda_1 + \lambda_2 +
   3\lambda_3)$, we have  $-\frac{1}{2}(\lambda_2 + \lambda_3) \le \omega_3 -\frac{\lambda_2}{2} -\frac{\lambda_3}{2}\le \frac{1}{3}(\lambda_1 - \lambda_2)$. However, on $Q_1$,\ $\lambda_3 \le -\frac{\lambda_1^2
+ 2\lambda_1 \lambda_2}{2 \lambda_1+\lambda_2}$, which implies
   \[ -\frac{1}{2}(\lambda_2 + \lambda_3) \ge -\frac{1}{2}(\lambda_2
  - \frac{\lambda_1^2 +2 \lambda_1 \lambda_2}{2 \lambda_1 +\lambda_2}) =
  \frac{\lambda_1^2 - \lambda_2^2}{4 \lambda_1 +2\lambda_2}\ge 0.\]
  Therefore, condition $(iii)$ holds.

Let $V=(\lambda_1-\omega_1)(\lambda_1-\omega_2) -\frac{1}{3}(\lambda_1-\lambda_2)(\lambda_1-\lambda_3)$. Using $\frac{1}{3}(\lambda_1+2 \lambda_2) \le \omega_1 \le \lambda_1 +\lambda_2 +\lambda_3 $ and $w_2=\frac{1}{2}(\lambda_1 +\lambda_2 +\lambda_3 -\omega_1)$, we have that
$ -\frac{1}{3}(\lambda_1^2 + 2 \lambda_1 \lambda_2
+ 2 \lambda_1 \lambda_3 + \lambda_2 \lambda_3) \le V \le \frac{1}{9}(\lambda_1 - \lambda_2)^2$.
On $Q_1$,\ $\lambda_3 \le -\frac{\lambda_1^2
+ 2\lambda_1 \lambda_2}{2 \lambda_1+\lambda_2}$, which implies  $-\frac{1}{3}(\lambda_1^2 + 2 \lambda_1 \lambda_2
+ 2 \lambda_1 \lambda_3 + \lambda_2 \lambda_3) \ge 0$.   Hence, condition $(iv)$ holds.

Now, consider the number $W$ defined in (\ref{eq:w}). Using $\frac{1}{3}(\lambda_1+2 \lambda_2) \le \omega_1 \le \lambda_1 +\lambda_2 +\lambda_3 $ and $w_2=\frac{1}{2}(\lambda_1 +\lambda_2 +\lambda_3 -\omega_1)$, we have
$ 0 \le W \le \frac{1}{12}(2\lambda_1 + 3\lambda_2 + \lambda_3) (2\lambda_1 + \lambda_2 + 3\lambda_3)$. Hence, condition $(v)$ holds.

Next consider the list $\Lambda$ on region $Q_2$. On $Q_2$, by Lemma \ref{lem:regionQ}, we have $U_2=\min\{ \lambda_1 + \lambda_2+\lambda_3, U_2 \}$, hence $\frac{1}{3}(\lambda_1+2 \lambda_2) \le \omega_1 \le U_2 $.
 Let $S= \sqrt{4(\lambda_1-\lambda_2)(\lambda_1 -\lambda_3)+3(\lambda_2 -\lambda_3)^2}$.

Then, we have
$\frac{1}{2}(\lambda_2 -\lambda_3) \le \omega_1 - \omega_2 \le \frac{1}{4}(-2\lambda_1 +\lambda_2 +\lambda_3) + \frac{\sqrt{3}}{4} S$, hence $\omega_1 \ge \omega_2$. Furthermore, we have
$\frac{1}{4}(2\lambda_1 +\lambda_2 +\lambda_3) - \frac{1}{4\sqrt{3}} S \le \omega_3 \le
\frac{1}{6}(2 \lambda_1 +\lambda_2 +3\lambda_3)$.
Consider  $\left(\frac{1}{4}(2\lambda_1 +\lambda_2 +\lambda_3)\right)^2 - \left(\frac{1}{4\sqrt{3}} S\right)^2 =
\frac{1}{6}(\lambda_1^2 + 2 \lambda_1 \lambda_2
+ 2 \lambda_1 \lambda_3 + \lambda_2 \lambda_3)$, which is nonnegative on $Q_2$, hence $\omega_3 \ge 0$. Therefore, condition $(i)$ holds.

A direct computation shows $\frac{1}{4}(2\lambda_1 -\lambda_2 -\lambda_3) - \frac{1}{4\sqrt{3}} S \le \omega_3 -\frac{\lambda_2}{2} -\frac{\lambda_3}{2} \le \frac{1}{3}(\lambda_1 -  \lambda_2) $. On $Q_2,\ \frac{1}{4}(2\lambda_1 -\lambda_2 -\lambda_3) - \frac{1}{4\sqrt{3}} S  >0$, because $\left(\frac{1}{4}(2\lambda_1 -\lambda_2 -\lambda_3)\right)^2 - \left(\frac{1}{4\sqrt{3}} S \right)^2 = \frac{1}{6}(\lambda_1 -\lambda_2)(\lambda_1 -\lambda_3) \ge 0 $. Therefore, condition $(iii)$ holds.

A direct computation shows that on $Q_2$,
$ 0 \le V \le \frac{1}{9}(\lambda_1 - \lambda_2)^2$.
Therefore, condition $(iv)$ holds.

In addition, a direct computation gives
$ 0 \le W \le \frac{1}{3}(\lambda_1 - \lambda_2)(\lambda_1 - \lambda_3)+\frac{1}{8}(\lambda_2 -\lambda_3)^2 -
 \frac{1}{8\sqrt{3}}(2\lambda_1 - \lambda_2 -\lambda_3)S$. Hence, condition $(v)$ satisfies. Therefore, list $\Omega$ satisfies the all conditions $(i)$--$(v)$.

Now, consider the other case when the number $\omega_1$ is out of range (\ref{eq:rdsw1}). Assume first $\omega_1 < \max\{ L_1, L_2, L_3 \}$.
Consider subregion $R_1$. On $R_1$, since $L_1 = \max\{ L_1, L_2, L_3 \}$, $\omega_1 < \max\{ L_1, L_2, L_3 \}$, which implies $\omega_1 < L_1$. We will show that there is no pair $(\omega_1, \omega_2)$ that satisfies conditions $(i)$--$(v)$ and $\omega_1 < L_1$ on region $R_1$. Indeed it is sufficient to show that if a pair $(\omega_1, \omega_2)$ satisfies $(i)$ and $(ii)$ and $\omega_1 < L_1$, then it violates condition $(v)$, i.e., $W <0$.

It is convenient to view a pair $(\omega_1, \omega_2)$ as a point on a $(\omega_1, \omega_2)-$ plane and $\lambda_1,\ \lambda_2$, and $\lambda_3$ as constants.
If a point $(\omega_1, \omega_2)$ satisfies conditions (a) $\omega_1 \ge \omega_2$, (b) $\omega_1 + 2 \omega_2 \ge \lambda_1 + \lambda_2+ \lambda_3$ (which comes from $\omega_2 \ge \omega_3$ and $(ii)$), and (c) $ \omega_1 < L_1$, then $(\omega_1, \omega_2)$ is a point on the polygon of vertices $p_0,\ p_1$, and $p_2$ containing the line segments $p_0 p_1$, and $p_0 p_2$, but not containing the line segment $p_1 p_2$, where
\begin{align*}
p_0 & =\left( \frac{1}{3}(\lambda_1 + \lambda_2+ \lambda_3),
\frac{1}{3}(\lambda_1 + \lambda_2+ \lambda_3)\right),\\
p_1 & = \left( \frac{1}{6}(2\lambda_1 + 3\lambda_2+ \lambda_3),
\frac{1}{6}(2\lambda_1 + 3\lambda_2+ \lambda_3)\right) \ \text{and} \\
p_2 & = \left( \frac{1}{6}(2\lambda_1 + 3\lambda_2+ \lambda_3),
\frac{1}{12}(4\lambda_1 + 3\lambda_2+ 5\lambda_3)\right).
\end{align*}
We may check that the point $p_0$ is the intersection of two lines $\omega_1 + 2 \omega_2 = \lambda_1 + \lambda_2+ \lambda_3$ and $\omega_1 = \omega_2$, the point $p_1$ is the intersection of two lines $\omega_1 = \omega_2$ and $\omega_1 =L_1$, and the point $p_2$ is the intersection of two lines $\omega_1 =L_1$ and $\omega_1 + 2 \omega_2 = \lambda_1 + \lambda_2+ \lambda_3$.

We can view $W$ as a function of variables $\omega_1$ and $\omega_2$, that is,
\begin{equation*}
W(\omega_1,\omega_2)
= (\lambda_1+\frac{\lambda_2}{2}  +\frac{\lambda_3}{2} -\omega_1 -\omega_2)^2
    -(\lambda_1-\omega_1)(\lambda_1-\omega_2) +\frac{1}{3}(\lambda_1-\lambda_2)(\lambda_1-\lambda_3).
\end{equation*}
Then, $W(\omega_1,\omega_2)$ is an elliptic paraboloid which opens upward with minimum at $p_0$, and we have that $W(p_0)=-\frac{1}{2}(\lambda_2-\lambda_3)^2,\ W(p_1)=0$ and $ W(p_2)=-\frac{1}{16}(\lambda_2-\lambda_3)^2$, which are all nonpositive on region $R_1$. Therefore, $W(\omega_1,\omega_2) < 0$ for each point $(\omega_1,\omega_2)$ on this polygon.

Consider subregion $R_2$. On $R_2$, since $L_2 = \max\{ L_1, L_2, L_3 \}$, then $\omega_1 < \max\{ L_1, L_2, L_3 \}$, which implies $\omega_1 < L_2$. We will show that there is no point $(\omega_1, \omega_2)$ that satisfies conditions $(i)$--$(v)$ and $\omega_1 < L_2$ on region $R_2$. Indeed it is enough to show that if $(\omega_1, \omega_2)$ satisfies $(i)$, $(ii)$ and $\omega_1 < L_2$, then it violates condition $(v)$, i.e., $W <0$.

If a point $(\omega_1, \omega_2)$ satisfies conditions (a) $\omega_1 \ge \omega_2$, (b) $\omega_1 + 2 \omega_2 \ge \lambda_1 + \lambda_2+ \lambda_3$, (c) $\omega_1 + \omega_2 \le \lambda_1 + \lambda_2+ \lambda_3$ (which comes from $ \omega_3 \ge 0$ and (ii) ) and (d) $ \omega_1 < L_2$, then $(\omega_1, \omega_2)$ is a point on the polygon of vertices $p_0, p_3, p_4$ and $p_5$ containing the boundary except
the line segment $p_4 p_5$, where
\begin{align*}
p_3 & =\left( \frac{1}{2}(\lambda_1 + \lambda_2+ \lambda_3), \
\frac{1}{2}(\lambda_1 + \lambda_2+ \lambda_3)\right), \\
p_4 & =\left( L_2,
\lambda_1 + \lambda_2+ \lambda_3-L_2 \right) \ \text{and} \\
p_5 & =\left( L_2,
\frac{1}{2}(\lambda_1 + \lambda_2+ \lambda_3-L_2) \right) .
\end{align*}

We may check that the point $p_3$ is the intersection of two lines $\omega_1 = \omega_2$ and $\omega_1 + \omega_2 = \lambda_1 + \lambda_2+ \lambda_3$, the point $p_4$ is the intersection of two lines $\omega_1 + \omega_2 = \lambda_1 + \lambda_2+ \lambda_3$ and $\omega_1 =L_2$, and the point $p_5$ is the intersection of two lines $\omega_1 =L_1$ and $\omega_1 + 2 \omega_2 = \lambda_1 + \lambda_2+ \lambda_3$.

Then, $W(\omega_1,\omega_2)$ is an elliptic paraboloid which opens upward with minimum at $p_0$, and we have that $W(p_0)=-\frac{1}{2}(\lambda_2-\lambda_3)^2,\ W(p_3)=\frac{1}{2}(\lambda_1+2\lambda_2) (\lambda_1+2\lambda_3), \
W(p_4)=0$ and
\begin{multline*}
W(p_5)=-\frac{1}{48}\left((\lambda_1+2\lambda_2)^2
+(\lambda_1+2\lambda_3)^2-(\lambda_2-\lambda_3)^2\right) \\
 + \frac{1}{8\sqrt{3}}(\lambda_1 + \lambda_2+ \lambda_3)
\sqrt{-(\lambda_1+2\lambda_2)(\lambda_1+2\lambda_3)},
\end{multline*}
which are all nonpositive on region $R_2$.  Therefore, $W(\omega_1,\omega_2) <0$ for each point $(\omega_1,\omega_2)$ on this polygon.

Consider subregion $R_3$. On $R_3$, since $L_3 = \max\{ L_1, L_2, L_3 \}$, we have $\omega_1 < \max\{ L_1, L_2, L_3 \}$, which implies $\omega_1 < L_3$. We will show that there is no point $(\omega_1, \omega_2)$ that satisfies conditions $(i)$--$(v)$ and $\omega_1 < L_3$ on region $R_3$. Indeed it is enough to show that if $(\omega_1, \omega_2)$ satisfies $(i)$--$(iii)$ and $\omega_1 < L_3$, then it violates condition $(v)$, i.e., $W <0$.

If a point $(\omega_1, \omega_2)$ satisfies conditions (a) $\omega_1 \ge \omega_2$, (b) $\omega_1 + 2 \omega_2 \ge \lambda_1 + \lambda_2+ \lambda_3$, (c) $\omega_1 + \omega_2 \le \lambda_1 + \frac{\lambda_2}{2}+ \frac{\lambda_3}{2}$ (which comes from (iii) ) and (d) $ \omega_1 < L_3$, then $(\omega_1, \omega_2)$ is a point on the polygon of vertices $p_0, p_6, p_7$, and $p_8$ containing boundary except for the line segment $p_7 p_8$, where
\begin{align*}
p_6 & =\left( \frac{1}{4}(2\lambda_1 + \lambda_2+ \lambda_3), \
\frac{1}{4}(2\lambda_1 + \lambda_2+ \lambda_3)\right), \\
p_7 &= \left( L_3,
\lambda_1 + \frac{\lambda_2}{2}+ \frac{\lambda_3}{2}-L_3 \right) \text{and} \\
p_8 &= \left( L_3,
\frac{1}{2}(\lambda_1 + \lambda_2+ \lambda_3-L_3) \right) .
\end{align*}

We may check that the point $p_6$ is the intersection of two lines $\omega_1 = \omega_2$ and $\omega_1 + \omega_2 = \lambda_1 + \frac{\lambda_2}{2}+ \frac{\lambda_3}{2}$, the point $p_7$ is the intersection of two lines $\omega_1 + \omega_2 = \lambda_1 + \frac{\lambda_2}{2}+ \frac{\lambda_3}{2}$ and $\omega_1 =L_3$, and the point $p_8$ is the intersection of two lines $\omega_1 =L_3$ and $\omega_1 + 2 \omega_2 = \lambda_1 + \lambda_2+ \lambda_3$.

Then, $W(\omega_1,\omega_2)$ is an elliptic paraboloid which opens upward with minimum at $p_0$, and we have that $W(p_0)=-\frac{1}{2}(\lambda_2-\lambda_3)^2,\ W(p_6)=\frac{1}{48}(2\lambda_1 + \lambda_2-3 \lambda_3) (2\lambda_1 -3 \lambda_2+ \lambda_3), \
W(p_7)=0$ and
\begin{multline*} W(p_8)=-\frac{1}{24}(\lambda_1-\lambda_2)
(\lambda_1-\lambda_3)-\frac{1}{32}(\lambda_2-\lambda_3)^2  \\
+\frac{1}{32\sqrt{3}}(2\lambda_1 - \lambda_2- \lambda_3)
\sqrt{-(2\lambda_1 + \lambda_2- 3\lambda_3)
(2\lambda_1 -3 \lambda_2+ \lambda_3)},
\end{multline*}
which are all nonpositive on region $R_3$. Therefore, $W(\omega_1,\omega_2) < 0$ for each point $(\omega_1,\omega_2)$ on this polygon.

Now, suppose  that $\omega_1 > \min\{\lambda_1 + \lambda_2+\lambda_3, U_2 \}$.
From condition $(ii)$ and nonnegativity of $\omega_2$ and $\omega_3$, we  have $\omega_1 \le \lambda_1 + \lambda_2+\lambda_3$. From conditions $\omega_1 + 2 \omega_2 \ge \lambda_1 + \lambda_2+ \lambda_3$, i.e., $ \omega_2 \ge \frac{1}{2}(\lambda_1 + \lambda_2+ \lambda_3 - \omega_1)$ and $(iv)$, we reduce
$ (\lambda_2 + \lambda_3 -U_2) \le \omega_1 \le U_2$. Therefore, we cannot have $\omega_1 > \min\{\lambda_1 + \lambda_2+\lambda_3, U_2 \}$.
Therefore, if $\omega_1$ is out of range (\ref{eq:rdsw1}), then $\Omega$ does not satisfy conditions $(i)$--$(v)$.
Hence, we conclude statement \ref{enum:rds3}.

\end{enumerate}
\end{proof}

%
%
%


Consider the case of complex eigenvalues

\begin{theorem}[complex, doubly stochastic case] \label{the:cds}
  Let $\Lambda=\{\lambda_1, \lambda_2, \lambda_3\}$ be a list of complex numbers with $ \lambda_1 \ge |\lambda_2| \ge |\lambda_3|$ and  $\Omega=\{\omega_1, \omega_2, \omega_3\}$ be a list of real numbers with $\omega_1 \ge\omega_2\ge \omega_3$.

 \begin{enumerate}
 \item \label{enum:cds1} There is a doubly stochastic nonnegative matrix with eigenvalues $\Lambda$ and diagonal entries
$\Omega$ if and only if the lists $\Lambda$ and $\Omega$ satisfy

  \begin{description}
    \item [] (i) $\omega_i \ge 0$ for $i= $ 1, 2, and 3,
    \item [] (ii) $\omega_1+\omega_2 +\omega_3 = \lambda_1 + \lambda_2+ \lambda_3$,
    \item [] (iii) $\omega_3 - \frac{\lambda_2}{2}- \frac{\lambda_3}{2} \ge 0$,
    \item [] (iv) $(\lambda_1-\omega_1)(\lambda_1-\omega_2) -\frac{1}{3}(\lambda_1-\lambda_2)(\lambda_1-\lambda_3) \ge 0$.
    \end{description}

\item \label{enum:cds3}  If $\Lambda$ and $\Omega$ satisfy the conditions of statement \ref{enum:cds1}, the exact range for $\omega_1$ is
    \begin{equation}\label{eq:cdsw1}
        \frac{1}{3}(\lambda_1 + \lambda_2 + \lambda_3) \le \omega_1 \le \min\{\lambda_1 + \lambda_2+\lambda_3, U_2 \},
        \end{equation}
  where $U_2$ is as defined in (\ref{eq:U2}).
  For the values for $\omega_2$ and $\omega_3$, we may take
        \begin{equation}\label{eq:cdsw2}
         \omega_2=\omega_3=\frac{1}{2} (\lambda_1 + \lambda_2 + \lambda_3- \omega_1).
        \end{equation}

\item \cite{PerfectMirsky65} \label{enum:cds4} The list $\Lambda$ is realizable as the eigenvalues of a doubly stochastic nonnegative matrix if and only if the following hold:
    \begin{description}
        \item[] (i) $\lambda_1 \ge 0$ and $\lambda_2 =  \overline{\lambda_3}$,
        \item[] (ii) $\lambda_1 +\lambda_2+\lambda_3 \ge 0$,
        \item[] (iii) $\lambda_1^2 +\lambda_2^2 +\lambda_3^2 \ge  \lambda_1 \lambda_2 +\lambda_1 \lambda_3 + \lambda_2 \lambda_3$.
    \end{description}
\end{enumerate}
\end{theorem}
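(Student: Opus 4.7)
The plan is to transfer the proof of Theorem \ref{the:rds} to the complex case, using the observation that when $\lambda_2 = \overline{\lambda_3}$ the discriminant-type condition (v) of the real case becomes automatic. Throughout I write $\lambda_2 = b + ci$, $\lambda_3 = b - ci$, and $m = \lambda_1 - b$, so that $(\lambda_1-\lambda_2)(\lambda_1-\lambda_3) = m^2 + c^2$.

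For statement \ref{enum:cds1}, necessity is obtained exactly as in the proof of Theorem \ref{the:rds}: parameterize a general doubly stochastic matrix with diagonal $\Omega$ by two off-diagonal entries $s, t \ge 0$, and match the two forms of its characteristic polynomial to read off (ii), (iii), and (iv). For sufficiency, I would take the same matrix $B$ as in Theorem \ref{the:rds}, with off-diagonal entries $\omega_i - b \pm \sqrt{W}$ and $W$ defined in (\ref{eq:w}). The only new point is that $W \ge 0$, which in the real case required condition (v), now follows from (ii) unconditionally: writing $u = \lambda_1 - \omega_1$ and $v = \lambda_1 - \omega_2$, condition (ii) gives $\omega_3 - b = u + v - m$, and the elementary inequality $uv \le \tfrac{1}{4}(u+v)^2$ yields
\[
W \;\ge\; (u+v-m)^2 - \tfrac{1}{4}(u+v)^2 + \tfrac{1}{3}(m^2+c^2) \;=\; \tfrac{1}{12}\bigl((3(u+v) - 4m)^2 + 4c^2\bigr) \;\ge\; 0.
\]
With $W \ge 0$ established, the rest of the argument is verbatim from Theorem \ref{the:rds}: (iii) and (iv) force $s + t \ge 0$ and $st \ge 0$, hence $s, t \ge 0$; the remaining off-diagonal entries of $B$ inherit nonnegativity from $\omega_1 \ge \omega_2 \ge \omega_3$; and $B$ has characteristic polynomial $(\lambda_1-\lambda)(\lambda_2-\lambda)(\lambda_3-\lambda)$ by (ii).

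For statement \ref{enum:cds3}, the lower bound $\omega_1 \ge \tfrac{1}{3}(\lambda_1+\lambda_2+\lambda_3)$ and the bound $\omega_1 \le \lambda_1+\lambda_2+\lambda_3$ follow from (i), (ii), and $\omega_1 \ge \omega_2 \ge \omega_3$, as in Theorem \ref{the:cs}. The second upper bound $\omega_1 \le U_2$ comes from (iv) applied to the most lenient admissible value $\omega_2 = \omega_3 = \tfrac{1}{2}(\lambda_1+\lambda_2+\lambda_3 - \omega_1)$: with this choice, (iv) reads $m^2 - (\omega_1 - b)^2 \ge \tfrac{2}{3}(m^2+c^2)$, which rearranges to $(\omega_1-b)^2 \le \tfrac{1}{3}(m^2-2c^2)$ and hence $\omega_1 \le U_2$. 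Conversely, the symmetric choice $\omega_2 = \omega_3$ verifies (i)--(iv) directly for any $\omega_1$ inside the stated range, while any $\omega_1$ outside the range forces one of (i), (ii), (iv) to fail for every admissible pair $\omega_2, \omega_3$.

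Statement \ref{enum:cds4} follows quickly. Necessity of (i)--(iii) is immediate from statement \ref{enum:cs4} of Theorem \ref{the:cs}, since every doubly stochastic matrix is stochastic. For sufficiency, I take $\omega_1 = \omega_2 = \omega_3 = \tfrac{1}{3}(\lambda_1+\lambda_2+\lambda_3)$ and verify (i)--(iv) of statement \ref{enum:cds1} directly; (iv) reduces precisely to $m^2 \ge 3c^2$, which is condition (iii) of \ref{enum:cds4} in disguise. The main obstacle throughout is recognizing that the sign $+c^2$ in $(\lambda_1-\lambda_2)(\lambda_1-\lambda_3) = m^2 + c^2$ (as opposed to the $-d^2$ appearing in the corresponding real-case expression $m^2 - d^2$) is exactly what makes the AM-GM reduction of condition (v) succeed; once this is spotted, the argument is a direct transposition of the proofs of Theorems \ref{the:cs} and \ref{the:rds}.
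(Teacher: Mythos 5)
Your argument follows the paper's approach: both reduce to Theorem \ref{the:rds} and observe that the real-case condition (v), i.e.\ $W\ge 0$, becomes automatic once $\lambda_2=\overline{\lambda_3}$. The only cosmetic difference is in how $W\ge 0$ is verified: the paper records the exact identity $W=\tfrac{3}{4}\bigl(\omega_1+\omega_2-\tfrac{2}{3}(\lambda_1+\lambda_2+\lambda_3)\bigr)^2+\tfrac{1}{4}(\omega_1-\omega_2)^2-\tfrac{1}{12}(\lambda_2-\lambda_3)^2$, whereas your AM-GM step $uv\le\tfrac{1}{4}(u+v)^2$ simply discards the middle square $\tfrac{1}{4}(\omega_1-\omega_2)^2$ from that identity to reach the same lower bound $W\ge\tfrac{1}{12}\bigl((3(u+v)-4m)^2+4c^2\bigr)\ge 0$.
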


\label{enum:cds2} For $\Lambda$ and $\Omega$ satisfying the conditions of statement \ref{enum:cds1}, we find the following the doubly stochastic nonnegative matrix
  \[
  \left(\begin{array}{cccccccc}
  \omega_1 & \omega_3-\frac{\lambda_2}{2}-\frac{\lambda_3}{2} +\sqrt{W} & \omega_2-\frac{\lambda_2}{2}-\frac{\lambda_3}{2} -\sqrt{W} \\
  \omega_3-\frac{\lambda_2}{2}-\frac{\lambda_3}{2}-\sqrt{W} & \omega_2 & \omega_1-\frac{\lambda_2}{2}-\frac{\lambda_3}{2} +\sqrt{W}\\
  \omega_2-\frac{\lambda_2}{2}-\frac{\lambda_3}{2}+\sqrt{W}& \omega_1-\frac{\lambda_2}{2}-\frac{\lambda_3}{2} -\sqrt{W} &  \omega_3
  \end{array}\right),
  \]
  where $W$ is defined in (\ref{eq:w}), with eigenvalues $\Lambda$ and diagonal entries $\Omega$.
For a usual stochastic matrix, we may take $\lambda_1 = 1$.

\begin{proof}
\begin{enumerate}
\item This statement is similar to that of Theorem \ref{the:rds}. The only difference is the absence of condition $(v)$ of statement \ref{enum:rds1} of Theorem \ref{the:rds}. In fact, condition $(v)$ is always true in this case. We may rewrite $W$ as
    \[ W= \frac{3}{4}\left(\omega_1+\omega_2 -\frac{2}{3}(\lambda_1 +\lambda_2 +\lambda_3)\right)^2 +
    \frac{1}{4}(\omega_1 -\omega_2)^2
    -\frac{1}{12}(\lambda_2-\lambda_3)^2, \]
     which is nonnegative, because when $\lambda_2 = \overline{\lambda_3}$, the term $ -\frac{1}{12}(\lambda_2-\lambda_3)^2$ is nonnegative.
     Therefore, we have a similar proof here.

\item[\ref{enum:cds4}.]
     Suppose that the list $\Lambda$ is realizable as the eigenvalues of a doubly stochastic nonnegative matrix. Since the doubly stochastic nonnegative matrix is also a nonnegative matrix, by statement \ref{enum:cg4} of Theorem \ref{the:cg}, $\Lambda$ satisfies conditions $(i)$--$(iii)$ of statement \ref{enum:cds4}.

    Conversely, we suppose that $\Lambda$ satisfy conditions $(i)$--$(iii)$ of statement \ref{enum:cds4}. Then, we may take values $\omega_1, \omega_2$, and $\omega_3$ as in ranges (\ref{eq:cdsw1}) and (\ref{eq:cdsw2}). Then, by statement \ref{enum:cds3}, the list $\Omega$ satisfies the properties $(i)$--$(iv)$ of statement \ref{enum:cds1}. For example let $\omega_1, \omega_2$, and $\omega_3$ be all equal to $\frac{1}{3}(\lambda_1 +\lambda_2+\lambda_3)$. Then, $\omega_3-\frac{\lambda_2}{2}-\frac{\lambda_3}{2}=
    \frac{1}{6}(2 \lambda_1-\lambda_2-\lambda_3)$ and
    $(\lambda_1-\omega_1)(\lambda_1-\omega_2) -\frac{1}{3}(\lambda_1-\lambda_2)(\lambda_1-\lambda_3)
    = \frac{1}{9}(\lambda_1^2 +\lambda_2^2 +\lambda_3^2 - \lambda_1 \lambda_2 -\lambda_1 \lambda_3 - \lambda_2 \lambda_3)$.
    Both are nonnegative, hence by statement \ref{enum:cds1}, $\Lambda$ is realizable as the eigenvalues of a doubly stochastic nonnegative matrix. This proves statement \ref{enum:cds4}.

\item[\ref{enum:cds3}.] We assume that statement \ref{enum:cds1} holds. Then, from statement \ref{enum:cds4}, we have that $\lambda_1 \ge 0$ and $\lambda_2 =  \overline{\lambda_3}$.
    We first show that if the number $\omega_1$ is within range (\ref{eq:cdsw1}), then taking numbers $\omega_2$ and $\omega_3$ with expression (\ref{eq:cdsw2}), the list $\Omega$ satisfies conditions $(i)$--$(iv)$. Second, we show that when $\omega_1$ out of range (\ref{eq:cdsw1}), $\Omega$ does not satisfy them.

    Suppose that $\Omega$ is within ranges (\ref{eq:cdsw1}) and (\ref{eq:cdsw2}).
    From this we can check that $\omega_1+\omega_2 +\omega_3 = \lambda_1 + \lambda_2+ \lambda_3$, which is condition $(ii)$.

    A direct computation shows $\omega_1 - \omega_2 = \frac{1}{2}(3 \omega_1 -\lambda_1 - \lambda_2- \lambda_3)$, which is nonnegative, and $\omega_3$ is nonnegative because
    \begin{multline*}
     \max\{ 0, \frac{1}{4}(2\lambda_2+\lambda_2 +\lambda_3)- \frac{1}{4\sqrt{3}} \sqrt{4(\lambda_1-\lambda_2)(\lambda_1 -\lambda_3)+3(\lambda_2 -\lambda_3)^2} \} \\
    \le \omega_3 \le \frac{1}{3} (\lambda_1 + \lambda_2+\lambda_3).
    \end{multline*}
Hence, condition $(i)$ is satisfied.

A direct computation shows
\begin{equation*}
     \max\{ - \frac{1}{2}(\lambda_2 +\lambda_3), \frac{1}{2}( \lambda_1 - U_2) \}
    \le \omega_3 - \frac{\lambda_2}{2}- \frac{\lambda_3}{2} \le \frac{1}{6} (2\lambda_1 - \lambda_2-\lambda_3).
    \end{equation*}
Now, we show $\frac{1}{2}( \lambda_1 - U_2) \ge 0$, then we have condition $(iii)$.
Consider
\begin{equation*}
\frac{1}{2}( \lambda_1 - U_2)= \\
\frac{1}{4}(2\lambda_2-\lambda_2 -\lambda_3)- \frac{1}{2\sqrt{3}} \sqrt{4(\lambda_1-\lambda_2)(\lambda_1 -\lambda_3)+3(\lambda_2 -\lambda_3)^2}.
\end{equation*}
Since $\frac{1}{4}(2\lambda_2-\lambda_2 -\lambda_3)$ is nonnegative, we may compute the following way
\begin{multline*}
\left(\frac{1}{4}(2\lambda_2-\lambda_2 -\lambda_3)\right)^2
-\left(\frac{1}{2\sqrt{3}} \sqrt{4(\lambda_1-\lambda_2)(\lambda_1 -\lambda_3)+3(\lambda_2 -\lambda_3)^2}\right)^2 \\
=\frac{1}{6}(\lambda_1 -\lambda_2)(\lambda_1 -\lambda_3)
= \frac{1}{6}(\lambda_1 -\lambda_2)(\overline{\lambda_1 -\lambda_2})
\ge 0.
\end{multline*}
Therefore, we have $\frac{1}{2}( \lambda_1 - U_2) \ge 0$.

A direct computation shows
\begin{multline*}
\max\{- \frac{1}{3}(\lambda_1^2 + 2\lambda_1 \lambda_2 + 2\lambda_1 \lambda_3 +  \lambda_2 \lambda_3), 0 \}
\le (\lambda_1-\omega_1)(\lambda_1-\omega_2) -\frac{1}{3}(\lambda_1-\lambda_2)(\lambda_1-\lambda_3) .
\end{multline*}
Hence, condition $(iv)$ holds.
Therefore, $\Omega$ satisfies conditions $(i)$--$(iv)$.

Second, we need to show that if the number $\omega_1$ is out of range (\ref{eq:cdsw1}), then there exists no $\Omega$ that satisfies conditions $(i)$--$(iv)$.
For this proof, we refer to the proof of statement \ref{enum:rds3} of Theorem \ref{the:rds}.
Therefore, we conclude statement \ref{enum:cds3}.
\end{enumerate}
\end{proof}

From conditions $(ii)$--$(iv)$ of statement \ref{enum:cds1} of Theorem \ref{the:cds}, we may write $\lambda_1=a, \lambda_2 =b+ci$ and $\lambda_3=b-ci$ where $a, b$, and $c$ are real numbers. We restate statements \ref{enum:cds3} and \ref{enum:cds4} of Theorem \ref{the:cds} in terms of $a, b$, and $c$. The list becomes $\Lambda = \{ a, b+c i, b-c i \}.$

\begin{corollary}
\begin{enumerate}
\item The list $\Lambda$ is realizable as the eigenvalues of a doubly stochastic nonnegative matrix if and only if the following hold:
    \begin{description}
        \item[] (i) $a \ge 0$,
        \item[] (ii) $-\frac{a}{2}  \le b \le a$,
        \item[] (iii) $(a-b)^2 \ge 3c^2 $.
    \end{description}

\item  The exact range for $\omega_1$ is
    \[
    \frac{1}{3}(a+2b) \le \omega_1 \le \min\{a+2b, b+\frac{1}{\sqrt{3}}\sqrt{(a-b)^2-2c^2}\}.
    \]
\end{enumerate}
\end{corollary}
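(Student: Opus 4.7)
The plan is to derive this corollary as an immediate translation of Theorem \ref{the:cds} under the substitution $\lambda_1 = a$, $\lambda_2 = b+ci$, $\lambda_3 = b-ci$. The proof breaks into two independent substitution arguments, one for each numbered statement, and no new ideas are required beyond Theorem \ref{the:cds}. I would first observe that the parametrization itself automatically enforces $\lambda_2 = \overline{\lambda_3}$, so condition (i) of statement \ref{enum:cds4} of Theorem \ref{the:cds} reduces to $a \ge 0$. Next I would compute $\lambda_1 + \lambda_2 + \lambda_3 = a + 2b$, turning statement \ref{enum:cds4}(ii) into $-a/2 \le b$. The upper bound $b \le a$ in condition (ii) of the corollary comes from the standing Perron--Frobenius hypothesis $\lambda_1 \ge |\lambda_2|$ of Theorem \ref{the:cds}, since $a \ge \sqrt{b^2+c^2} \ge |b|$. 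For condition (iii) I would expand
\[
\lambda_1^2+\lambda_2^2+\lambda_3^2 - \lambda_1\lambda_2-\lambda_1\lambda_3-\lambda_2\lambda_3 = a^2 + 2(b^2-c^2) - 2ab - (b^2+c^2) = (a-b)^2 - 3c^2,
\]
so statement \ref{enum:cds4}(iii) becomes $(a-b)^2 \ge 3c^2$. This establishes part 1.

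For part 2, the lower bound $\frac{1}{3}(\lambda_1+\lambda_2+\lambda_3)$ in (\ref{eq:cdsw1}) is $\frac{1}{3}(a+2b)$, and the first term of the upper bound $\lambda_1+\lambda_2+\lambda_3$ is $a+2b$. The substantive step is computing $U_2$ from (\ref{eq:U2}): since $(\lambda_1-\lambda_2)(\lambda_1-\lambda_3) = (a-b-ci)(a-b+ci) = (a-b)^2+c^2$ and $(\lambda_2-\lambda_3)^2 = (2ci)^2 = -4c^2$, the expression under the square root becomes
\[
4((a-b)^2+c^2) - 12c^2 = 4((a-b)^2 - 2c^2),
\]
hence
\[
U_2 = b + \tfrac{1}{2\sqrt{3}} \cdot 2\sqrt{(a-b)^2-2c^2} = b + \tfrac{1}{\sqrt{3}}\sqrt{(a-b)^2-2c^2},
\]
which is the claimed bound. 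The discriminant $(a-b)^2 - 2c^2$ is nonnegative because condition (iii) of part 1 gives $(a-b)^2 \ge 3c^2 \ge 2c^2$, so the square root is well defined.

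There is no real obstacle here; the only nontrivial step is the algebraic simplification of $U_2$, and the only subtle point is recognizing that the constraint $b \le a$ in the corollary's condition (ii) is inherited from the Perron hypothesis $\lambda_1 \ge |\lambda_2|$ rather than from conditions (i)--(iii) of statement \ref{enum:cds4}. Everything else is direct substitution into already-proved results from Theorem \ref{the:cds}.
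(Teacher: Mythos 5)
Your proposal is correct and follows exactly the approach the paper intends: the corollary is a direct translation of Theorem \ref{the:cds} under the substitution $\lambda_1=a$, $\lambda_2=b+ci$, $\lambda_3=b-ci$, and all of your algebra checks out --- in particular $(\lambda_1-\lambda_2)(\lambda_1-\lambda_3)=(a-b)^2+c^2$, $(\lambda_2-\lambda_3)^2=-4c^2$, hence the radicand $4\bigl((a-b)^2-2c^2\bigr)$ and $U_2=b+\tfrac{1}{\sqrt{3}}\sqrt{(a-b)^2-2c^2}$, and $\lambda_1^2+\lambda_2^2+\lambda_3^2-\lambda_1\lambda_2-\lambda_1\lambda_3-\lambda_2\lambda_3=(a-b)^2-3c^2$. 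You correctly flag that $b\le a$ is inherited from the Perron hypothesis rather than from statement \ref{enum:cds4}(i)--(iii), but you stop short of closing the converse: for the ``if'' direction of part 1 you must check that the corollary's conditions (i)--(iii) imply the standing hypothesis $a\ge\sqrt{b^2+c^2}$ so that Theorem \ref{the:cds} is applicable. This is immediate, since $a^2-b^2-c^2\ge a^2-b^2-\tfrac{1}{3}(a-b)^2=\tfrac{2}{3}(a-b)(a+2b)\ge 0$ under (ii) and (iii), but it should be stated to make the if-and-only-if genuinely two-sided.
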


\bibliographystyle{abbrv}

\end{document}